\let\footnote=\endnote
\newcites{EC}{References}
\setlist[enumerate]{align=left}
\endBaseMini\toggletrue{bodyCon}}
\newcommand{\abs}[1]{\left\lvert#1\right\rvert}
\begin{document}

\RUNAUTHOR{You and Liu}

\RUNTITLE{Statistical Robustness of In-CVaR Based Regression Models under Perturbation and
Contamination} 

\newcommand{\myfulltitle}{Statistical Robustness of Interval CVaR Based Regression Models under Perturbation and Contamination}%
\TITLE{\myfulltitle}

\ARTICLEAUTHORS{%
\AUTHOR{Yulei You}
\AFF{Department of Industrial Engineering, Tsinghua University, Beijing 100084, China, \EMAIL{youyl23@mails.tsinghua.edu.cn}} %
\AUTHOR{Junyi Liu}
\AFF{Department of Industrial Engineering, Tsinghua University, Beijing 100084, China, \EMAIL{junyiliu@tsinghua.edu.cn}}
} %

\ABSTRACT{%
Robustness under perturbation and contamination is a prominent issue in statistical learning. We address the robust nonlinear regression based on the so-called  interval conditional value-at-risk (In-CVaR), which is introduced to enhance robustness by trimming extreme losses. While recent literature shows that the In-CVaR based statistical learning exhibits superior robustness performance than classical robust regression models, its theoretical robustness analysis for nonlinear regression remains largely unexplored. We rigorously quantify robustness under contamination, with a unified study of distributional breakdown point for a broad class of regression models,  including linear, piecewise affine and neural network models with $\ell_1$, $\ell_2$ and Huber losses.  Moreover, we analyze the qualitative robustness of the In-CVaR based estimator under perturbation. We show that under several minor assumptions, the In-CVaR based estimator is qualitatively robust in terms of the Prokhorov metric if and only if the largest portion of losses is trimmed. Overall, this study analyzes robustness  properties of In-CVaR based nonlinear regression models under both perturbation and contamination,  which illustrates the advantages of In-CVaR risk measure over conditional value-at-risk and expectation for  robust regression in both theory and numerical experiments.
}%

\KEYWORDS{Interval conditional value-at-risk, Nonlinear regression, Qualitative and quantitative robustness} 

\maketitle
\vspace{-1\baselineskip}
\section{Introduction}\label{sec:Intro}

Robust regression is a prominent  topic for mitigating the impact of distributional distortion in data-driven estimation. One type of distributional distortion is contamination, which may be arbitrarily or adversarially bad and does not vanish. It is well known that the classical least squares estimator is severely affected by contamination. In contrast, by trimming the extreme tails of the empirical losses, the trimming approaches replace the sample average of losses with ranked summation in the objective for minimization in order to enhance the robustness of estimators. This trimming idea can be traced back to the least median of squares (LMS) and the least trimmed squares (LTS) proposed by \cite{rousseeuw1984least} for linear regression. Generalizing the trimming idea, \cite{liu2022risk}  propose interval conditional value-at-risk (In-CVaR) for statistical learning, in which the In-CVaR risk measure is defined as the average over an interval between two specific quantiles, namely the $\alpha$-value-at-risk (VaR) and the $\beta$-VaR with $0 \leq \alpha < \beta \leq 1$. Recent works on trimmed statistical learning include classification \citep{tsyurmasto2013support, fujiwara2017dc}, regression \citep{oliveira2025trimmed},  multi-class learning \citep{hu2020learning,hu2022sum}, and partial AUC optimization \citep{yao2022large}. \cite{jiang2024distributionally} relate In-CVaR with distributionally favorable optimization, providing new insight into the outlier resistance of In-CVaR.  \cite{blanchet2025distributionally} provides a thorough review on the distinction of distributionally robust optimization and robust statistics.  

Depending on the magnitude of the distortion, another type of distributional distortion is  perturbation. Robustness under perturbation concerns how the estimator behaves as the deviation vanishes around the nominal distribution. As introduced by \cite{hampel1971general}, the qualitative robustness is characterized via topological continuity; that is, an estimator is qualitatively robust if a small change in the data-generating distribution results only in a small change of the estimator. \cite{cont2010robustness} show that the historical In-CVaR risk measure is qualitatively robust with respect to the Prokhorov metric if and only if extreme losses are trimmed; nevertheless, their results are not generalizable to In-CVaR based regression estimators. 

There are a series of works on  computational methods for trimmed statistical estimators, including deterministic difference-of-convex algorithms \citep{fujiwara2017dc, hu2020learning},  mixed-integer programming approaches \citep{jiang2024distributionally}, proximal-gradient method   \citep{aravkin2020trimmed} under the smoothness condition, and stochastic difference-of-convex algorithms \citep{liu2022risk} for In-CVaR based nonlinear statistical learning allowing for nondifferentiability. Despite these modeling and computational advances of In-CVaR based statistical learning, as well as its promising practical performance in mitigating the effects of outliers, its theoretical robustness analysis remains largely unexplored, particularly for In-CVaR based nonlinear regression. It is unclear for which types of regression models,  In-CVaR based estimators possess the robustness property. Moreover, it would be valuable to understand the role of hyperparameter levels $(\alpha,\beta)$ in shaping the robustness. Hence, the present paper is motivated to address the following question: 

\textit{Under what conditions on the loss function, the regression model and the hyperparameter levels, the In-CVaR based statistical estimator would exhibit robustness under contamination and perturbation?} 

In what follows, we discuss several related works that analyze robustness under contamination and perturbation. Robustness under contamination is typically assessed using  the breakdown point (BP) introduced by \cite{hampel1968contributions}, which characterizes the largest fraction of contamination  that the estimator can handle without giving a misleading (e.g., arbitrarily large) result.  We refer to  \cite{huber2009robust} for a modern discussion of the BP analysis. LMS and LTS for linear regression attain the asymptotic norm-based BP of $1/2$ \citep{rousseeuw1984least}. More generally, for In-CVaR based linear regression that trims the largest $1-\beta$ and thee smallest $\alpha$ fractions of losses, it achieves the asymptotic BP $\min\{\beta,1-\beta\}$ \citep{hossjer1994rank}. For nonlinear regression, \cite{stromberg1992breakdown} argue that the norm-based BP lacks invariance under reparameterization  and instead propose value-based BP utilizing the estimated regression function. However, their analysis is limited to LMS and LTS under the $\ell_2$ loss, and regression functions of the form $f(x,\theta_1,\theta_0)=g(\theta_1 x+\theta_0)$ with monotonicity of $g$. Such assumptions exclude many commonly used regression functions such as piecewise affine and neural network regressions, as well as loss functions such as the $\ell_1$ loss and Huber loss \citep{huber1973robust}. 

Those limitations motivate us to develop a more general BP analysis for In-CVaR based nonlinear regression. The main technical challenge in distributional BP analysis arises from the complex composition among the loss function, the nonlinear regression function, and the In-CVaR risk measure. \cite{kanamori2017breakdown} study the  In-CVaR based nonlinear support vector machine and show that its finite-sample BP equals $1-\beta$ given $\beta > 1/2$ and suitable requirements on the hyperparameter pair $(\alpha,\beta)$. Their analysis relies critically on the binary label structure and therefore is not directly applicable to regression settings. The BP analysis for In-CVaR based nonlinear regression in this paper extends the existing works in three aspects: (1) we consider the distributional version of BP, which is applicable for both empirical and population estimators; (2) we analyze the upper and lower bounds $\min\{\beta, 1-\beta\}$ of BP using constructive approaches for In-CVaR based regression, which includes least squares, LMS, and LTS as special cases; (3) we rigorously characterize the conditions of regression models (e.g., positive homogeneity) and loss functions for robustness under contamination, which covers a broad class of regression models including piecewise affine and neural network models with $\ell_1$ and Huber losses.

Regarding the robustness under perturbation, common measures include influence function \citep{huber2009robust}, Lipschitz-type statistical robustness \citep{guo2021statistical, wang2021quantitative}, and qualitative robustness based on topological continuity \citep{cont2010robustness}. \cite{embrechts2018quantile} show that in risk-sharing problems, In-CVaR based optimization solutions yield the most robust allocations when the largest portion of losses is trimmed. Another related work is \cite{zhang2024statistical}, which analyzes the qualitative robustness of kernel learning estimators obtained via empirical risk minimization, using topological continuity under the $\psi$-weak topology.  According to  \citet[equation (2.9)]{zhang2024statistical}, this notion of robustness under the $\psi$-weak topology is strictly weaker than that induced by the weak topology used in \citet{cont2010robustness,embrechts2018quantile}. In fact, we show that in the context of In-CVaR based nonlinear regression, the statistical estimator is qualitatively robust against perturbation under the Prokhorov metric, which metrizes the weak topology, if and only if $\beta \neq 1$, yielding that the empirical risk minimization would not be qualitatively robust against perturbation in this sense. 

In summary, this paper provides a unified robustness analysis of In-CVaR based regression under both contamination and perturbation. We provide the main contributions as follows.

1. \textit{Robustness under contamination.}  
We introduce a notion of \textit{distributional BP} based on the estimated regression value to quantify the robustness under contamination. We provide a rigorous study of distributional BP with a unified framework on the lower and upper bounds of BP (Theorems \ref{thm: positivehomo}, \ref{thm:decom} and \ref{thm:little_o}), illustrating conditions on the regression function, the loss function, and the nominal distribution. We verify that this unified framework covers a  broad class of regression models, including linear, piecewise affine and neural network models (Section \ref{sec:typical_regression}). The analysis shows that the distributional BP of the In-CVaR based estimator is $\min\{\beta, 1 - \beta \}$, where $\beta$ denotes the upper trimming parameter of In-CVaR, illustrating that the In-CVaR based estimator achieves the optimal BP value of $1/2$  under appropriate assumptions.  

2. \textit{Robustness under perturbation.} 
We analyze  \textit{qualitative robustness} against perturbation for In-CVaR based regression in terms of the Prokhorov metric. We show that the In-CVaR based estimator is consistent (Theorem  \ref{thm:consistency}) when both the distributional and empirical estimators lie within a compact set.  We further show that the In-CVaR based estimator  is stable with respect to the  Prokhorov metric if $\beta \neq 1$ (Theorem \ref{thm:stability}). Based on consistency and stability properties, we establish that the In-CVaR based estimator is qualitatively robust in terms of the Prokhorov metric if and only if the largest portion of losses is trimmed. This finding substantially broadens the robustness of In-CVaR, moving beyond the risk-measure perspective \citep{cont2010robustness} to also cover the robustness of the statistical estimator under perturbation.

The remainder of this paper is organized as follows. Section \ref{sec:BP} presents the distributional BP for In-CVaR based regression within a unified framework and verifies the conditions across various types of loss and regression functions. Section \ref{sec:qrao}  investigates the qualitative robustness of the In-CVaR based estimator, establishing consistency and stability properties, and providing both sufficient and necessary conditions for qualitative robustness under perturbation. Section \ref{sec:exp}  conducts numerical experiments on piecewise affine regression to illustrate and validate our theoretical findings. Section \ref{sec:conclusion} concludes the paper with discussions and future research directions.

\section{Distributional BP for In-CVaR Based Regression Models}
\label{sec:BP}

In this section, we analyze the distributional BP for In-CVaR based regression models.  We introduce the notation and mathematical formulation in Section \ref{sec:notation_problem}, and present the formal definition of the distributional BP in Section \ref{DBP_In-CVaR_regression}. Then we provide lower and upper bounds for the distributional BP of the In-CVaR based estimator in Section \ref{sec:lb_incvar} and \ref{sec:ub_incvar}, respectively. In Section \ref{sec:typical_regression}, we present the distributional BP results for several typical regression models.

\subsection{Notations and Problem Statement}
\label{sec:notation_problem}

In this section, we analyze the distributional BP of the In-CVaR based estimator to illustrate its robustness under contamination. We begin by introducing the notation used throughout the paper. Let $\mathbb{N}$ denote the set of nonnegative integers, $\mathbb{N}_+$ the set of positive integers, $\mathbb{R}^p$ the $p$-dimensional Euclidean space, $\mathbb{R}^p_+$ the nonnegative orthant, and $\mathbb{R}^{p\times q}$ the space of real-valued $p \times q$ matrices. For $n\in \mathbb{N}_+$, let $[n] \triangleq \{1, \cdots, n\}$. For $i \in [p]$, let $\BFe_i \in \mathbb{R}^p$ denote the standard basis vector with 1 in the $i$-th coordinate and 0 elsewhere. For a subscripted vector $\BFtheta_\bullet \in \mathbb{R}^p$ and $i \in [p]$, let $\BFtheta_{\bullet,i}$ denote its $i$-th component. For any vector $\boldsymbol{x} = (x_1, \cdots , x_p) \in \mathbb{R}^p$, let $\Vert \boldsymbol{x} \Vert$ denote its Euclidean norm, and the open ball centered at $\boldsymbol{x}$ with radius $\varepsilon > 0$ is given by $\mathcal{B}(\boldsymbol{x}, \varepsilon) \triangleq \{\BFy \in \mathbb{R}^p \mid  \Vert \boldsymbol{x} - \BFy \Vert <\varepsilon\}$. Given a set $A \subseteq \mathbb{R}^p$, we define its $\varepsilon$-neighborhood by $\mathcal{B}(A, \varepsilon) \triangleq \bigcup_{\boldsymbol{x} \in A} \mathcal{B}(\boldsymbol{x}, \varepsilon),$ the distance from a point $\boldsymbol{x} \in \mathbb{R}^p$ to $A$ by $\mbox{dist}(\boldsymbol{x}, A) \triangleq \inf_{\BFy \in A} \Vert \boldsymbol{x} - \BFy \Vert$ and the diameter of $A$ by $\mbox{diam}(A) \triangleq \sup_{\boldsymbol{x}, \BFy\in A}\Vert \boldsymbol{x} - \BFy\Vert$. For a finite set $A$, we use $| A |$ to denote its cardinality. All analysis is conducted on a Polish probability space $(\Omega, \mathcal{F}, \mathbb{P})$. 

Let $\mathcal{X} \subseteq \mathbb{R}^p$ and $\mathcal{Y} \subseteq \mathbb{R}$ denote the domain sets of attributes and response respectively, and $\Theta \subseteq \mathbb{R}^q$ denote the set of admissible parameters for a regression model $f(\bullet, \bullet): \mathcal{X} \times \Theta \to \mathcal{Y}$. Let $\mathcal{L} : [0, \infty) \rightarrow [0, \infty) $ be a loss function. Given a parameter $\BFtheta \in \Theta$, the prediction error for an input-output pair $(\boldsymbol{x}, y)$ is measured by $\abs{f(\boldsymbol{x}, \BFtheta)-y}$, and the associated loss is given by $\mathcal{L}(\abs{f(\boldsymbol{x}, \BFtheta) - y})$. Several commonly used regression and loss functions are listed in Appendix \ref{app:funs}. Given a distribution $D$ supported on $\mathcal{X} \times \mathcal{Y}$, let $(\boldsymbol{X}_D, Y_D)$ denote a random variable distributed according to $D$. Then, with a scalar $\alpha \in [0,1]$, the $\alpha$-VaR of the loss is defined as 
\[
\mbox{VaR}_\alpha\left(\mathcal{L}\left(\,\abs{f(\boldsymbol{X}_{D}, \BFtheta) - Y_{D}}\right)\right) \triangleq  \inf\left\{t: \mathbb{P}\left(\mathcal{L}\left(\,\abs{f(\boldsymbol{X}_{D}, \BFtheta) - Y_{D}}\right) \leq t\right)\geq \alpha\right\},
\]
and the $\alpha$-conditional value-at-risk ($\alpha$-CVaR) of the loss is given by
\[
\mbox{CVaR}_\alpha\left(\mathcal{L}\left(\,\abs{f(\boldsymbol{X}_{D}, \BFtheta) - Y_{D}}\right)\right) \triangleq \frac{1}{1-\alpha}\int_\alpha^1 \mbox{VaR}_\gamma\left(\mathcal{L}\left(\,\abs{f(\boldsymbol{X}_{D}, \BFtheta) - Y_{D}}\right)\right)\rm{d}\gamma.
\]
For $0 \leq \alpha < \beta \leq 1$, the $(\alpha,\beta)$-In-CVaR of the loss is defined as
\[
\mbox{In-CVaR}_\alpha^\beta\left(\mathcal{L}\left(\,\abs{f(\boldsymbol{X}_{D}, \BFtheta) - Y_{D}}\right)\right) \triangleq  \frac{1}{\beta - \alpha}\int_\alpha^\beta\mbox{VaR}_\gamma\left(\mathcal{L}\left(\,\abs{f(\boldsymbol{X}_{D}, \BFtheta) - Y_{D}}\right)\right)\rm{d}\gamma.  
\]
The In-CVaR based regression problem seeks $\BFtheta \in \Theta$ that minimizes the $(\alpha, \beta)$-In-CVaR of the losses under a distribution $D$, i.e.,
\begin{equation}
    \label{eq:problem_statement}
    \min_{\BFtheta \in \Theta} \mbox{In-CVaR}_\alpha^\beta\left(\mathcal{L}\left(\,\abs{f(\boldsymbol{X}_{D}, \BFtheta) - Y_{D}}\right)\right),
\end{equation}
the optimal solution set of which is denoted as $\hat{\mathcal{S}}_\alpha^\beta(D,f)$. Problem \eqref{eq:problem_statement} unifies several classical regression formulations. It reduces to expectation minimization when $\alpha = 0$ and $\beta = 1$, recovers the LTS formulation \citep{rousseeuw1984least, aravkin2020trimmed} when $\alpha = 0$ and $\beta < 1$, and yields the LMS formulation \citep{rousseeuw1984least} in the limit as $\alpha,\beta \to 1/2$.

\subsection{Concepts of Breakdown Point of Statistical Estimation}
\label{DBP_In-CVaR_regression}

The most widely used notion of BP is the finite-sample BP. Let  $\hat{\BFtheta}(S_N)$ denote the  estimator based on the  uncontaminated data $S_N = \{(\boldsymbol{X}_i, Y_i)\}_{i=1}^N$  and $\hat  \BFtheta(  \widetilde{S}_{N,m} )$ denote the   estimator based on the contaminated data $\widetilde{S}_{N,m} \in \widetilde{\Xi}_{N,m}$. Here, $\widetilde{\Xi}_{N,m}$ denotes the collection of all samples obtained by replacing $m$ of the original $N$ observations with $m$ arbitrary outliers. With  $\Theta = \mathbb{R}^q$,  the finite-sample BP introduced in \cite{huber2009robust} is defined as 
\[
\varepsilon^*(\hat{\BFtheta}, S_N) \triangleq \sup \Bigg\{ \frac{m}{N}: \,\,   \sup_{\widetilde{S}_{N,m} \in \widetilde{\Xi}_{N,m}}  \|   \hat  \BFtheta(  \widetilde{S}_{N,m} ) \|  < \infty    \Bigg\},
\]
which is particularly suitable for linear regression due to its simplicity and  intuitive interpretation. Since  $\varepsilon^*(\hat{\BFtheta}, S_N)$ is not invariant under reparameterization for nonlinear regression,  \cite{stromberg1992breakdown} propose the BP based on the estimated regression values for nonlinear regression. The asymptotic BP for In-CVaR based linear regression is $\min\{\beta, 1-\beta\}$ according to \cite{hossjer1994rank}, and we summarize the known asymptotic BP results for linear and nonlinear regression models in Table \ref{tab:bp_results}.  However,  to the best of our knowledge, the BP for In-CVaR based nonlinear regression remains unknown. In this section, we show that the BP for In-CVaR based nonlinear regression also equals $\min\{\beta, 1-\beta\}$ under appropriate assumptions.
\begin{table}[h]
\TABLE
{Existing asymptotic BP results for In-CVaR related regression models\label{tab:bp_results}.}
{
\small
\begin{tabular}{@{}l@{\quad}l@{\quad}l@{\quad}l@{}}
    \hline\up 
    Model & Regression function & Loss function & Asymptotic BP \\
    \hline\up 
    least squares \citep{stromberg1992breakdown, rousseeuw2003robust} & linear \& nonlinear  & $\ell_2$ loss& 0\\
    least absolute deviations \citep{rousseeuw2003robust}  & linear & $\ell_1$ loss& 0 \\
    LMS \citep{rousseeuw1984least, stromberg1992breakdown} & linear \& nonlinear & $\ell_2 $ loss & $1/2$\\[0.5ex]
    LTS \citep{rousseeuw1984least, stromberg1992breakdown} & linear \& nonlinear & $\ell_2$ loss & $1/2$\\ 
    In-CVaR based linear regression \citep{hossjer1994rank} & linear & $\ell_1$, $\ell_2$, Huber losses & $\min\{\beta, 1-\beta\}$\\
    \hline
\end{tabular}}
{}
\end{table}

For the convenience of analysis, we focus on the distributional extension of the finite-sample BP. According to \cite{davies2005breakdown}, the distributional BP is related to the largest deviation of the contaminated distribution from the true distribution that the estimator could handle without giving an arbitrarily large result. Let $D_0$ denote the uncontaminated distribution. For a given deviation level $\varepsilon \in (0, 1)$, the set of contaminated distributions is denoted by
\[
\mathcal{B}(D_0, \varepsilon) \triangleq  \left\{ (1-\varepsilon) D_0 + \varepsilon \, G: \, G \in \mathcal{D} \,  \right\},
\]
where the plausible set $\mathcal{D} \triangleq \{D: \mathcal{L}\left(\,\abs{f(\boldsymbol{X}_{D}, \BFtheta) - Y_{D}}\right)\, \text{is integrable for all }\, \BFtheta \in \Theta\}$. Extending the BP definitions in  \cite{stromberg1992breakdown} and \cite{davies2005breakdown}, we introduce the formal definition of the distributional BP addressing the situation when the parameter solutions to the regression problem may not be unique. Let $\hat{\mathcal{S}}(D, f)$ denote the set of parameter estimators for solving a regression model under the distribution $D$ and the regression function $f$. The norm-based distributional BP for the parameter estimators is defined as
\[
\varepsilon^*(  \hat{\mathcal{S}}, D_0, f) 
\triangleq  \sup_{\varepsilon \in  [0,1] } \,\, \Big\{\varepsilon: \sup_{D \in \mathcal{B}(D_0, \varepsilon)} \, \sup_{\hat{\BFtheta}  \in \hat{\mathcal{S}}(D,f)} \| \hat{\BFtheta}  \| < \infty  \, \Big\},
\]
and the value-based distributional BP is formally presented as follows. 

\begin{definition}[Distributional breakdown point]
    \label{def:d_bp_point}
    With the regression function $f$, let $\widehat{\mathcal{X}} \triangleq \{ \boldsymbol{x} \in \mathcal{X} \subseteq  \mathbb{R}^p: \, f(\boldsymbol{x}, \BFtheta) \neq f( \boldsymbol{x}, \BFtheta^\prime) \mbox{ for some } \BFtheta, \BFtheta^\prime \in \Theta \,\}$ denote the nontrivial attribute set. With the nominal distribution $D_0$ and an estimator set $\hat{\mathcal{S}}$, suppose that $\sup_{\BFtheta \in \Theta} \, \big| \, f(\boldsymbol{x}, \BFtheta) \, | = \infty$ and $\sup_{\hat{\BFtheta}\in \hat{\mathcal{S}}(D_0, f)  } \, \big| \, f(\boldsymbol{x}, \hat{\BFtheta} ) \, | < \infty$ for all $\boldsymbol{x}\in \widehat{\mathcal{X}}$.  
    The value-based distributional BP of $\hat{\mathcal{S}}$ is defined as
    \[
    \begin{array}{c}
        \varepsilon^\prime(\hat{\mathcal{S}}, D_0, f) = \displaystyle \inf_{\boldsymbol{x} \in  \widehat{\mathcal{X}}}   \, \, { \sup_{\varepsilon \in [0,1]}} \Big\{\varepsilon: \sup_{D \in \mathcal{B}(D_0, \varepsilon)} \,  \sup_{\hat{\BFtheta} \in \hat{\mathcal{S}}(D,f)} \, \abs{ f(\boldsymbol{x}, \hat{\BFtheta} ) } <  \infty  \Big\}.
    \end{array}
    \]
\end{definition}

It is straightforward to see  that the value-based distributional BP remains invariant under any one-to-one reparameterization. The next proposition shows that $\varepsilon^\prime( \hat{\mathcal{S}}, D_0, f)$ and $  \varepsilon^*(  \hat{\mathcal{S}}, D_0, f)$ coincide for linear regression.  

\begin{proposition}
    \label{prop:d_bp_point_linear}
    Suppose that $f(\bullet, \bullet): \mathbb{R}^p \times \mathbb{R}^{p+1} \to \mathbb{R}$ is a linear regression function with $f(\boldsymbol{x}, \BFtheta_1, \theta_0) = \BFtheta_1^\top \boldsymbol{x} + \theta_0$. For the   estimator  $\hat{\mathcal{S}} (D, f) \subseteq \mathbb{R}^{p+1}$ under the linear regression function $f$ and the distribution $D$, we have $ \varepsilon^\prime( \hat{\mathcal{S}}, D_0, f) =  \varepsilon^*(  \hat{\mathcal{S}}, D_0, f)$.  
\end{proposition}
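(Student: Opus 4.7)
The plan is to prove the equality by establishing the two opposing inequalities between the value-based and norm-based distributional breakdown points. I first note that for the affine model $f(\boldsymbol{x}, \BFtheta_1, \theta_0) = \BFtheta_1^\top \boldsymbol{x} + \theta_0$, the nontrivial attribute set is $\widehat{\mathcal{X}} = \mathbb{R}^p$, since perturbing the intercept $\theta_0$ alone changes $f(\boldsymbol{x}, \cdot)$ at every $\boldsymbol{x}$; the two prerequisites in Definition \ref{def:d_bp_point} are trivial to check, as driving $|\theta_0| \to \infty$ drives $|f(\boldsymbol{x}, \BFtheta)|$ to infinity uniformly in $\boldsymbol{x}$.

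For the easier direction $\varepsilon^\prime(\hat{\mathcal{S}}, D_0, f) \geq \varepsilon^*(\hat{\mathcal{S}}, D_0, f)$, I would invoke the elementary linear bound $|f(\boldsymbol{x}, \hat{\BFtheta}_1, \hat{\theta}_0)| \leq (\|\boldsymbol{x}\| + 1) \|(\hat{\BFtheta}_1, \hat{\theta}_0)\|$. This implies that whenever $\sup_{D \in \mathcal{B}(D_0, \varepsilon)} \sup_{\hat{\BFtheta} \in \hat{\mathcal{S}}(D, f)} \|\hat{\BFtheta}\|$ is finite at a deviation level $\varepsilon$, the corresponding value supremum at every fixed $\boldsymbol{x} \in \widehat{\mathcal{X}}$ is also finite at the same $\varepsilon$. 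Since the set of admissible $\varepsilon$ in each definition is a down-set of $[0,1]$ (smaller contamination radii yield smaller $\mathcal{B}(D_0, \varepsilon)$), taking suprema and then the infimum over $\boldsymbol{x}$ preserves the inequality.

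The reverse direction $\varepsilon^\prime(\hat{\mathcal{S}}, D_0, f) \leq \varepsilon^*(\hat{\mathcal{S}}, D_0, f)$ is where the main obstacle lies: one must convert unboundedness of the parameter norm along some sequence $(D_n, \hat{\BFtheta}^{(n)})$ into unboundedness of $|f(\boldsymbol{x}_0, \cdot)|$ at a \emph{single} fixed attribute $\boldsymbol{x}_0$. I would handle this constructively. For any $\eta \in (0, 1 - \varepsilon^*)$, pick $\varepsilon = \varepsilon^* + \eta$, which by definition of $\varepsilon^*$ supplies $D_n \in \mathcal{B}(D_0, \varepsilon)$ and $\hat{\BFtheta}^{(n)} = (\hat{\BFtheta}_1^{(n)}, \hat{\theta}_0^{(n)}) \in \hat{\mathcal{S}}(D_n, f)$ with $\|\hat{\BFtheta}^{(n)}\| \to \infty$. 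I then evaluate $f$ on the finite grid $\mathcal{G} \triangleq \{\mathbf{0}, \BFe_1, \ldots, \BFe_p\} \subset \widehat{\mathcal{X}}$ and set $M_n \triangleq \max_{\boldsymbol{x} \in \mathcal{G}} |f(\boldsymbol{x}, \hat{\BFtheta}^{(n)})|$. Using $|\hat{\theta}_0^{(n)}| = |f(\mathbf{0}, \hat{\BFtheta}^{(n)})| \leq M_n$ together with $|\hat{\BFtheta}_{1,i}^{(n)}| \leq |f(\BFe_i, \hat{\BFtheta}^{(n)})| + |\hat{\theta}_0^{(n)}| \leq 2 M_n$ for each $i \in [p]$, I obtain $\|\hat{\BFtheta}^{(n)}\| \leq C_p M_n$ for a constant $C_p$ depending only on $p$, so $M_n \to \infty$. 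A pigeonhole on the finite set $\mathcal{G}$ then isolates a single $\boldsymbol{x}_0 \in \mathcal{G}$ and a subsequence along which $|f(\boldsymbol{x}_0, \hat{\BFtheta}^{(n_k)})| \to \infty$, forcing $\varepsilon^\prime(\hat{\mathcal{S}}, D_0, f) \leq \varepsilon^* + \eta$. Letting $\eta \downarrow 0$ closes the gap, and the boundary case $\varepsilon^* = 1$ is already covered by the first inequality.
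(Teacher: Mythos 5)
Your proposal is correct and follows essentially the same route as the paper: establish $\varepsilon^\prime \geq \varepsilon^*$ from the bound $|f(\boldsymbol{x},\hat{\BFtheta})| \leq (\|\boldsymbol{x}\|+1)\|\hat{\BFtheta}\|$, and $\varepsilon^\prime \leq \varepsilon^*$ by converting norm blow-up into value blow-up at some fixed attribute. Your grid-plus-pigeonhole argument on $\{\mathbf{0},\BFe_1,\ldots,\BFe_p\}$ simply makes explicit the step the paper states without detail ("there must exist a vector $\tilde{\boldsymbol{x}}$ such that \ldots"), so it is, if anything, a more complete write-up of the same proof.
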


\begin{proof}{Proof.}
We first observe that for linear regression, $\widehat{\mathcal{X}} = \mathbb{R}^p $. For any $\varepsilon > \varepsilon^\prime(\hat{\mathcal{S}}, D_0, f) $,  by definition, there exists some $\bar{\boldsymbol{x}} \in \mathbb{R}^p$ such that $\sup_{D \in \mathcal{B}(D_0,  \varepsilon )} \,  \sup_{\hat{\BFtheta}  \in \hat{\mathcal{S}}(D,f)} \, \left| \,  \hat{\BFtheta}_1^\top \bar{\boldsymbol{x}}  + \hat{\BFtheta}_0  \, \right| =  \infty$, and thus $\sup_{D \in \mathcal{B}(D_0,  \varepsilon )} \,  \sup_{\hat{\BFtheta}  \in \hat{\mathcal{S}}(D,f)} \,  \left \|  \hat{\BFtheta}   \right \| =  \infty,$ which yields that $\varepsilon^\prime(\hat{\mathcal{S}}, D_0, f) \geq \varepsilon^*(\hat{\mathcal{S}}, D_0, f)$.  We then only need to prove the other side of the relation, i.e.,   $\varepsilon^\prime(\hat{\mathcal{S}}, D_0, f) \leq \varepsilon^*(\hat{\mathcal{S}}, D_0, f)$. For any $\varepsilon > \varepsilon^*(\hat{\mathcal{S}}, D_0, f)$, we have $\sup_{D \in \mathcal{B}(D_0,  \varepsilon )} \,  \sup_{\hat{\BFtheta}  \in \hat{\mathcal{S}}(D,f)} \, \left\| \, \hat \BFtheta  \, \right\| =  \infty.$ Then,  there must exist a vector $\Tilde{\boldsymbol{x}} \in  {\mathbb{R}^p}$ such that $\sup_{D \in \mathcal{B}(D_0,  \varepsilon )}  \,  \sup_{\hat{\BFtheta}  \in \hat{\mathcal{S}}(D,f)} \,  \left| \,   \hat{\BFtheta}_1^\top \Tilde{\boldsymbol{x}}  + \hat{\BFtheta}_0 \,  \right| =  \infty,$ which completes the proof. 
\Halmos\end{proof}

The value-based BP concept given in Definition \ref{def:d_bp_point} properly handles non-uniqueness and reparameterization, and thus provides an appropriate measure for quantifying the robustness of In-CVaR based nonlinear regression. 

We make the following two blanket assumptions without repeating them in the rest of this section.
    \begin{enumerate}[
    label=\textbf{Assumption~(A\arabic*):}~,
    ref=(A\arabic*),
    leftmargin=0pt,
    labelindent=0pt,
    itemindent=0pt,
    labelwidth=0pt,
    labelsep=0pt,
    listparindent=0pt,
    align=left
    ]
    \item  \label{ass:a1}
    The loss function $\mathcal{L} : \mathbb{R}_+ \rightarrow \mathbb{R}_+ $ is non-decreasing, satisfies $\mathcal{L}(0)=0$ and $\lim_{t \rightarrow \infty} \mathcal{L}(t) = \infty$.
    \item \label{ass:a2}
    The nontrivial attribute set $\widehat{\mathcal{X}}$ is nonempty. For the nominal distribution $D_0$ and any $\boldsymbol{x}\in \widehat{\mathcal{X}}$, 
    \[
    \sup_{\BFtheta \in \Theta} \, \big| \, f(\boldsymbol{x}, \BFtheta) \, | = \infty \quad \text{and} \quad 
    \sup_{\hat{\BFtheta}  \in \hat{\mathcal{S}}_\alpha^\beta(D_0,f)} \, \big| \, f(\boldsymbol{x}, \hat{\BFtheta} ) \, | < \infty.
    \]
    \end{enumerate}
Assumption \ref{ass:a1}  is a mild and standard requirement on the loss function. This holds for most loss functions in regression, such as the $\ell_1$ loss, $\ell_2$ loss and Huber loss.  Moreover,  Assumption \ref{ass:a2} covers a broad class of parameter set and regression functions, allowing our BP analysis to focus exclusively on whether contamination can force the estimated regression value to diverge.

\subsection{Lower Bounds of the Distributional BP of In-CVaR Based Estimators}
\label{sec:lb_incvar}
Motivated by Theorem 3 in \cite{stromberg1992breakdown}, we present the following proposition, which gives a lower bound for the distributional BP of the In-CVaR based estimator. For any nontrivial ${\boldsymbol{x}} \in \widehat{\mathcal{X}}$, we denote 
\[
\varepsilon^\prime({\boldsymbol{x}},  \hat{\mathcal{S}}, D_0, f)\triangleq \displaystyle{ \sup_{\varepsilon \in [0,1]}} \Big\{\varepsilon: \sup_{D \in \mathcal{B}(D_0, \varepsilon)} \,  \sup_{\hat{\BFtheta} \in \hat{\mathcal{S}}(D,f)} \, \abs{ f({\boldsymbol{x}}, \hat{\BFtheta}) } <  \infty  \Big\}.
\]

\begin{proposition}
\label{prop: bp_lower}
For fixed  $\bar{\boldsymbol{x}} \in \widehat{\mathcal{X}}$ and $\bar\varepsilon \in (0, \min\{\beta, 1 - \beta\}]$ with $\beta \in (0,1)$, if for any  $\BFtheta \in \Theta$, there exists $\mathcal{X}(\bar{\boldsymbol{x}}, \BFtheta)\subseteq\mathcal{X}$ such that  
\begin{equation}
\label{eq:neighbor_infinity_condition_prop2}
\displaystyle \mathbb{P} \left( \boldsymbol{X}_{D_0} \in \mathcal{X}(\bar{\boldsymbol{x}}, \BFtheta) \right) \geq \frac{1 - \beta} {1- \bar\varepsilon},  \, \, \mbox{and} \,  \, \lim _{M \rightarrow \infty} \inf _{\left\{\BFtheta\in\Theta :|f(\bar{\boldsymbol{x}}, \BFtheta)|>M\right\}}\inf _{
{\boldsymbol{x}} \in \mathcal{X}(\bar{\boldsymbol{x}}, \BFtheta) }\abs{f\left({\boldsymbol{x}}, \BFtheta\right)}=\infty,
\end{equation}
then $\varepsilon^\prime(\bar{\boldsymbol{x}},  \hat{\mathcal{S}}_{\alpha}^{\beta}, D_0, f) \geq \bar\varepsilon.$
\end{proposition}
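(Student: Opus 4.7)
I would argue by contradiction. Fix any $\varepsilon \in (0, \bar\varepsilon)$ and suppose there exist a sequence of contaminated distributions $D_k = (1-\varepsilon) D_0 + \varepsilon G_k \in \mathcal{B}(D_0, \varepsilon)$ and minimizers $\hat\BFtheta_k \in \hat{\mathcal{S}}_\alpha^\beta(D_k, f)$ with $\abs{f(\bar{\boldsymbol{x}}, \hat\BFtheta_k)} \to \infty$. Setting $\mathcal{X}_k \triangleq \mathcal{X}(\bar{\boldsymbol{x}}, \hat\BFtheta_k)$, the second half of condition~\eqref{eq:neighbor_infinity_condition_prop2} forces $M_k \triangleq \inf_{\boldsymbol{x} \in \mathcal{X}_k} \abs{f(\boldsymbol{x}, \hat\BFtheta_k)} \to \infty$. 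The plan is to show that $\mbox{In-CVaR}_\alpha^\beta$ of the loss at $\hat\BFtheta_k$ under $D_k$ diverges, whereas $\mbox{In-CVaR}_\alpha^\beta$ of the loss at any fixed $\bar\BFtheta \in \hat{\mathcal{S}}_\alpha^\beta(D_0, f)$ (nonempty by Assumption~\ref{ass:a2}) remains uniformly bounded in $k$, contradicting the optimality of $\hat\BFtheta_k$.

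For the divergence step, since $Y_{D_0}$ is real-valued I would pick $C_\delta$ with $\mathbb{P}_{D_0}(\abs{Y_{D_0}} > C_\delta) < \delta$, so the first half of~\eqref{eq:neighbor_infinity_condition_prop2} yields
\[
\mathbb{P}_{D_0}\bigl(\boldsymbol{X}_{D_0} \in \mathcal{X}_k,\ \abs{Y_{D_0}} \leq C_\delta\bigr) \,\geq\, \tfrac{1-\beta}{1-\bar\varepsilon} - \delta.
\]
Monotonicity of $\mathcal{L}$ (Assumption~\ref{ass:a1}) makes the loss at $\hat\BFtheta_k$ at least $\mathcal{L}(M_k - C_\delta)$ on this event, and transferring to the mixture yields
\[
\mathbb{P}_{D_k}\bigl(\mathcal{L}(\abs{f(\boldsymbol{X}_{D_k}, \hat\BFtheta_k) - Y_{D_k}}) \geq \mathcal{L}(M_k - C_\delta)\bigr) \,\geq\, (1-\varepsilon)\bigl(\tfrac{1-\beta}{1-\bar\varepsilon} - \delta\bigr).
\]
Because $\varepsilon < \bar\varepsilon$ gives $\tfrac{1-\varepsilon}{1-\bar\varepsilon} > 1$, I can choose $\delta > 0$ small enough that the right-hand side exceeds $1-\beta$ by a fixed $\eta > 0$. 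Consequently $\mbox{VaR}_\gamma$ of the loss at $\hat\BFtheta_k$ under $D_k$ is at least $\mathcal{L}(M_k - C_\delta)$ for every $\gamma \in (\beta-\eta,\beta]$; since this interval has positive length and $\lim_{t\to\infty}\mathcal{L}(t) = \infty$, the In-CVaR formula then gives $\mbox{In-CVaR}_\alpha^\beta(\mathcal{L}(\abs{f(\boldsymbol{X}_{D_k}, \hat\BFtheta_k) - Y_{D_k}})) \to \infty$.

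For the boundedness step, a direct CDF comparison $F_{D_k}(t) \geq (1-\varepsilon) F_{D_0}(t)$ applied to the loss distribution at $\bar\BFtheta$ implies $\mbox{VaR}_\gamma(\bar\BFtheta, D_k) \leq \mbox{VaR}_{\gamma/(1-\varepsilon)}(\bar\BFtheta, D_0)$ for every $\gamma \leq 1-\varepsilon$. Since $\varepsilon < \bar\varepsilon \leq 1-\beta$ ensures $\beta/(1-\varepsilon) < 1$, a change of variable in the In-CVaR integral produces the uniform upper bound
\[
\mbox{In-CVaR}_\alpha^\beta(\bar\BFtheta, D_k) \,\leq\, \mbox{In-CVaR}_{\alpha/(1-\varepsilon)}^{\beta/(1-\varepsilon)}(\bar\BFtheta, D_0) \,<\, \infty,
\]
with finiteness following from loss-integrability together with $\beta/(1-\varepsilon) < 1$. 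Combined with the previous step this contradicts $\mbox{In-CVaR}_\alpha^\beta(\hat\BFtheta_k, D_k) \leq \mbox{In-CVaR}_\alpha^\beta(\bar\BFtheta, D_k)$, so no such sequence exists; hence $\sup_{D \in \mathcal{B}(D_0, \varepsilon)} \sup_{\hat\BFtheta \in \hat{\mathcal{S}}_\alpha^\beta(D, f)} \abs{f(\bar{\boldsymbol{x}}, \hat\BFtheta)} < \infty$ for every $\varepsilon < \bar\varepsilon$, and $\varepsilon^\prime(\bar{\boldsymbol{x}}, \hat{\mathcal{S}}_\alpha^\beta, D_0, f) \geq \bar\varepsilon$ follows by definition.

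The main obstacle I foresee is securing a strictly-larger-than-$(1-\beta)$ probability of the \emph{large-loss} event under $D_k$, rather than just of the event $\{\boldsymbol{X}_{D_k} \in \mathcal{X}_k\}$. This is exactly where the strict slack $\varepsilon < \bar\varepsilon$ is consumed, and where the auxiliary $Y$-truncation is essential: without it, an adversarial $G_k$ could place mass inside $\mathcal{X}_k$ with cancelling $Y$-values and collapse the lower bound on the loss probability. Once that slack is secured, both the lower bound on In-CVaR at $\hat\BFtheta_k$ and the upper bound at $\bar\BFtheta$ reduce to elementary quantile and change-of-variable manipulations.
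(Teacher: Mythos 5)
Your proposal is correct and follows essentially the same route as the paper's proof: truncate $Y_{D_0}$ to a bounded set, use the probability condition to show that under any contaminated distribution the large-loss event carries mass strictly exceeding $1-\beta$, conclude that $\mbox{VaR}_\gamma$ near level $\beta$ (and hence the In-CVaR objective) diverges at any $\BFtheta$ with $\abs{f(\bar{\boldsymbol{x}},\BFtheta)}$ large, and contradict the uniform finiteness of the optimal value via the quantile change-of-variable bound (the paper's Lemma \ref{lem: bound} (a)). The only cosmetic difference is where the slack comes from — you take $\varepsilon<\bar\varepsilon$ strictly and pass to the supremum at the end, whereas the paper works at $\bar\varepsilon$ with a strict probability inequality and handles the boundary case by letting $\tau\to 0$ — but the key estimates are identical.
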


\begin{proof}{Proof.}

We only need to consider the case $ \mathbb{P} \left( \boldsymbol{X}_{D_0} \in \mathcal{X}(\bar{\boldsymbol{x}}, \BFtheta) \right) > (1 - \beta)/(1- \bar\varepsilon)$. This is because if the claim holds for such a case and $ \mathbb{P} \left( \boldsymbol{X}_{D_0} \in \mathcal{X}(\bar{\boldsymbol{x}}, \BFtheta) \right) = (1 - \beta)/(1- \bar\varepsilon)$, we obtain that  $\varepsilon^\prime( \bar{\boldsymbol{x}}, \hat{\mathcal{S}}_{\alpha}^{\beta}, D_0, f) \geq \bar\varepsilon - \tau $  for any $\tau \in (0, \bar \varepsilon)$, which yields $\varepsilon^\prime( \bar{\boldsymbol{x}}, \hat{\mathcal{S}}_{\alpha}^{\beta}, D_0, f) \geq \bar\varepsilon $ by letting $\tau \to 0$. Suppose $ \mathbb{P} \left( \boldsymbol{X}_{D_0} \in \mathcal{X}(\bar{\boldsymbol{x}}, \BFtheta) \right) > (1 - \beta)/(1- \bar\varepsilon)$, then for $\tau \in \left(0, (1- \bar \varepsilon)\,\mathbb{P} \left( \boldsymbol{X}_{D_0} \in \mathcal{X}(\bar{\boldsymbol{x}}, \BFtheta) \right)- (1 - \beta)\right),$ there exists $T > 0$ such that $\mathbb{P}\left(\,\abs{Y_{D_0}} \leq T\right) > 1 - \tau/2.$ For any $D \in \mathcal{B}(D_0,  \bar \varepsilon)$, we must have $D = (1- \bar \varepsilon) D_0+ \bar \varepsilon \, G$ for some $G \in \mathcal{D}$ and
\[
\begin{aligned}
    & \mathbb{P}\left(\boldsymbol{X}_D \in \mathcal{X}(\bar{\boldsymbol{x}}, \BFtheta), \,\abs{Y_D} \leq T\right) \\
    & \qquad = (1- \bar{\varepsilon})\, \mathbb{P} \left( \boldsymbol{X}_{D_0} \in \mathcal{X}(\bar{\boldsymbol{x}}, \BFtheta), \,\abs{Y_{D_0}} \leq T  \right) + \bar{\varepsilon}\,\mathbb{P}\left(\boldsymbol{X}_{G} \in \mathcal{X}(\bar{\boldsymbol{x}}, \BFtheta), \,\abs{Y_{G}} \leq T\right)\\
    & \qquad \geq  ( 1- \bar \varepsilon )\left(\mathbb{P} \left( \boldsymbol{X}_{D_0} \in \mathcal{X}(\bar{\boldsymbol{x}}, \BFtheta)\right) -  \mathbb{P} \left( \,\abs{Y_{D_0}} > T \right)\right)\\
    & \qquad \geq \displaystyle (1-\bar \varepsilon) \, \mathbb{P} \left( \boldsymbol{X}_{D_0} \in \mathcal{X}(\bar{\boldsymbol{x}}, \BFtheta)\right) - \frac{\tau}{2}.
\end{aligned}
\]
Clearly, $\tau/2 \leq \beta$. Then 
\[
\begin{aligned} 
    & \displaystyle \mathbb{P} \left(\,\mathcal{L}\left(\,\abs{f(\boldsymbol{X}_D, \BFtheta)-Y_D}\right)\leq \mbox{VaR}_{\beta - \frac{\tau}{2} }\left( \, \mathcal{L}\left(\,\abs{f(\boldsymbol{X}_D, \BFtheta)-Y_D}\right)\right), \boldsymbol{X}_D \in \mathcal{X}(\bar{\boldsymbol{x}}, \BFtheta), \, \abs{Y_D} \leq T \right)  \\
    & \qquad \geq \beta -\frac{\tau}{2} + \mathbb{P}\left(\boldsymbol{X}_D \in \mathcal{X}(\bar{\boldsymbol{x}}, \BFtheta) , \, \abs{Y_D} \leq T \right) - 1 \\
    & \qquad \geq  \beta -\frac{\tau}{2}  + \displaystyle  ( 1- \bar \varepsilon )\,\mathbb{P} \left( \boldsymbol{X}_{D_0} \in \mathcal{X}(\bar{\boldsymbol{x}}, \BFtheta)\right) - \frac{\tau}{2} -1 >0.
    \end{aligned}
\]
Therefore, 
\[
\begin{aligned}
    & \lim _{M \rightarrow \infty} \inf _{\{\BFtheta\in\Theta :|f(\bar{\boldsymbol{x}}, \BFtheta)|>M\}} \inf_{D \in \mathcal{B}(D_0, \bar \varepsilon)}\left\{\mbox{VaR}_{\beta- \frac{\tau}{2}}\left( \,\mathcal{L}\left(\,\abs{f(\boldsymbol{X}_D, \BFtheta)-Y_D}\right) \right)\right\} \\
    & \qquad \geq \lim _{M \rightarrow \infty} \inf _{\{\BFtheta\in\Theta :|f(\bar{\boldsymbol{x}}, \BFtheta)|>M\}}\left\{\inf _{ {\boldsymbol{x}} \in \mathcal{X}(\bar{\boldsymbol{x}}, \BFtheta), \, \abs{y} \leq T }\left( \,\mathcal{L}\left(\,\abs{f(\boldsymbol{x}, \BFtheta)-y}\right) \right)\right\}=\infty,
\end{aligned}
\]
which implies that 
\[
\begin{aligned}
    \lim _{M \rightarrow \infty} \inf _{\{\BFtheta\in\Theta :|f(\bar{\boldsymbol{x}}, \BFtheta)|>M\}}\inf_{D \in \mathcal{B}(D_0, \bar \varepsilon)}\left\{\mbox{In-CVaR}_{\alpha}^{\beta} \left(\mathcal{L}\left(\,\abs{f(\boldsymbol{X}_D, \BFtheta)-Y_D}\right)\right) \right\} = \infty.
\end{aligned}
\]
On the other hand, from Lemma \ref{lem: bound} (a),
\[
\begin{aligned}
     \displaystyle \inf_\BFtheta\mbox{In-CVaR}_{\alpha}^{\beta} \left(\mathcal{L}\left(\,\abs{f(\boldsymbol{X}_D, \BFtheta)-Y_D}\right)\right)   
     \leq   \inf_\BFtheta\mbox{In-CVaR}_{ {\alpha}/{(1-\bar\varepsilon)}}^{ {\beta}/{(1-\bar\varepsilon)}} \left(\mathcal{L}\left(\,\abs{f(\boldsymbol{X}_{D_0}, \BFtheta)-Y_{D_0}}\right)\right) <\infty.
\end{aligned}
\]
This implies that $ \sup_{D \in \mathcal{B}(D_0, \bar{\varepsilon})} \sup_{\hat{\BFtheta} \in \hat{\mathcal{S}}_\alpha^\beta(D,f)} |f(\bar{\boldsymbol{x}}, \hat{\BFtheta})| < \infty$ and therefore $\varepsilon^\prime( \bar{\boldsymbol{x}}, \hat{\mathcal{S}}_{\alpha}^{\beta}, D_0, f) \geq \bar\varepsilon.$\Halmos\end{proof}

If the conditions \eqref{eq:neighbor_infinity_condition_prop2} in Proposition \ref{prop: bp_lower} hold for all $\boldsymbol{x} \in \widehat{\mathcal{X}}$, then we immediately obtain the lower bound of the distributional BP. In comparison with Theorem 3 in \cite{stromberg1992breakdown} regarding the finite-sample BP, Proposition~\ref{prop: bp_lower} extends to the distributional setting and allows $\mathcal{X}(\bar{\boldsymbol{x}}, \BFtheta)$ to be variable with $\BFtheta$. These generalizations allow our analysis to be applicable to a broad class of regression models satisfying Assumption \ref{ass:b1} as below.  

\begin{enumerate}[
    label=\textbf{Assumption~(B\arabic*):}~,
    ref=(B\arabic*),
    leftmargin=0pt,
    labelindent=0pt,
    itemindent=0pt,
    labelwidth=0pt,
    labelsep=0pt,
    listparindent=0pt,
    align=left
    ]
    \item \label{ass:b1}
    Suppose the regression function  $f: \mathcal{X} \times \Theta \rightarrow \mathcal{Y}$ satisfies:
    \begin{enumerate}[label=(\alph*)]
        \item for any nontrivial $\bar{\boldsymbol{x}} \in \widehat{\mathcal{X}}$, there exists $T>0$ such that $\sup_{\{\BFtheta \in \Theta: \| \BFtheta \| = 1\}}|f(\bar{\boldsymbol{x}}, \BFtheta)|<T$.
        \item $\Theta \subseteq \mathbb{R}^q$ is a cone and $f(\boldsymbol{x}, a \BFtheta) = a f(\boldsymbol{x}, \BFtheta)$  for all $a \geq 0$, $ \BFtheta \in \Theta$ and $\boldsymbol x \in \mathcal{X}$. 
    \end{enumerate}
\end{enumerate}
Assumption \ref{ass:b1} (a) imposes a mild boundedness condition on $f$ over unit-norm parameters. Assumption \ref{ass:b1} (b) requires positive homogeneity of $f$ with respect to $\BFtheta$, which is satisfied by linear, piecewise affine, polynomial and logarithmic functions. The following theorem shows that, under Assumption \ref{ass:b1}, together with a probabilistic condition on the nominal distribution $D_0$, the distributional BP of the In-CVaR based estimator is lower bounded by $\min\{\beta, 1-\beta\}$.

\begin{theorem} 
\label{thm: positivehomo}
Suppose the regression function $f$ satisfies Assumption \ref{ass:b1}. With $\beta \in (0,1]$, assume that the nominal distribution $D_0$ satisfies 
\begin{equation}
    \label{eq:thm: positivehomo}
    \lim_{\delta\rightarrow 0^+}\sup_{\{\BFtheta \in \Theta: \| \BFtheta \| = 1\} }\mathbb{P}\left(\, \abs{f(\boldsymbol{X}_{D_0}, \BFtheta)} < \delta\,  \right) \leq \max\left\{\frac{2\beta - 1}{\beta}, 0 \right\}.
\end{equation}
Then we have $\varepsilon^\prime( \hat{\mathcal{S}}_{\alpha}^{\beta}, D_0, f) \geq \min\{\beta, 1-\beta\}$.  
\end{theorem}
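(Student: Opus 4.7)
The plan is to invoke Proposition~\ref{prop: bp_lower} with a construction of $\mathcal{X}(\bar{\boldsymbol{x}},\BFtheta)$ that leverages the positive homogeneity in Assumption~\ref{ass:b1}(b). The case $\beta=1$ is immediate since $\min\{\beta,1-\beta\}=0$ and the distributional BP is nonnegative by Assumption~\ref{ass:a2}. For $\beta\in(0,1)$, it suffices to show $\varepsilon^\prime(\bar{\boldsymbol{x}},\hat{\mathcal{S}}_{\alpha}^{\beta},D_0,f)\geq\bar{\varepsilon}$ for every fixed $\bar{\boldsymbol{x}}\in\widehat{\mathcal{X}}$ and every $\bar{\varepsilon}\in(0,\min\{\beta,1-\beta\})$; the theorem then follows by taking the infimum over $\widehat{\mathcal{X}}$ and letting $\bar{\varepsilon}\uparrow\min\{\beta,1-\beta\}$.

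For such a pair $(\bar{\boldsymbol{x}},\bar{\varepsilon})$, the concrete choice is
\[
\mathcal{X}(\bar{\boldsymbol{x}},\BFtheta) \;\triangleq\; \bigl\{\boldsymbol{x}\in\mathcal{X}:\;\bigl|f(\boldsymbol{x},\BFtheta/\|\BFtheta\|)\bigr|\geq\delta\bigr\} \text{ for } \BFtheta\neq 0, \quad \mathcal{X}(\bar{\boldsymbol{x}},0)=\mathcal{X},
\]
for a small $\delta>0$ to be determined. The novelty over \cite{stromberg1992breakdown} is that the set depends on $\BFtheta$ through its normalized direction $\BFtheta/\|\BFtheta\|$, which is precisely the flexibility granted by Proposition~\ref{prop: bp_lower}. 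For the divergence clause of \eqref{eq:neighbor_infinity_condition_prop2}, if $|f(\bar{\boldsymbol{x}},\BFtheta)|>M$ then $\BFtheta\neq 0$, and Assumption~\ref{ass:b1}(a) supplies a constant $T=T(\bar{\boldsymbol{x}})$ with $|f(\bar{\boldsymbol{x}},\BFtheta/\|\BFtheta\|)|<T$; together with the positive homogeneity of $f$, this forces $\|\BFtheta\|>M/T$, so every $\boldsymbol{x}\in\mathcal{X}(\bar{\boldsymbol{x}},\BFtheta)$ satisfies $|f(\boldsymbol{x},\BFtheta)|=\|\BFtheta\|\,|f(\boldsymbol{x},\BFtheta/\|\BFtheta\|)|\geq (M/T)\,\delta$, which diverges as $M\to\infty$.

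For the probability clause, hypothesis~\eqref{eq:thm: positivehomo} is used to select $\delta$. Given any $\eta>0$, I would shrink $\delta$ so that $\sup_{\|\BFtheta\|=1}\mathbb{P}(|f(\boldsymbol{X}_{D_0},\BFtheta)|<\delta)\leq\max\{(2\beta-1)/\beta,0\}+\eta$, giving
\[
\mathbb{P}\bigl(\boldsymbol{X}_{D_0}\in\mathcal{X}(\bar{\boldsymbol{x}},\BFtheta)\bigr) \;\geq\; \min\{(1-\beta)/\beta,\,1\}-\eta \quad \text{uniformly in } \BFtheta\neq 0.
\]
A short case split shows that $\bar{\varepsilon}<\min\{\beta,1-\beta\}$ makes the target ratio $(1-\beta)/(1-\bar{\varepsilon})$ strictly below $\min\{(1-\beta)/\beta,1\}$ in both regimes $\beta\leq 1/2$ and $\beta>1/2$, so for $\eta$ small enough the required bound holds; at $\BFtheta=0$ it is trivial. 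Proposition~\ref{prop: bp_lower} then delivers $\varepsilon^\prime(\bar{\boldsymbol{x}},\hat{\mathcal{S}}_{\alpha}^{\beta},D_0,f)\geq\bar{\varepsilon}$.

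The main obstacle I anticipate is the uniformity in $\BFtheta$: a \emph{single} constant $\delta>0$ must work for every direction on the unit sphere simultaneously, otherwise Proposition~\ref{prop: bp_lower}'s hypothesis would fail for some adversarial $\BFtheta$. This is precisely why \eqref{eq:thm: positivehomo} is formulated as a supremum over $\|\BFtheta\|=1$ inside the $\lim_{\delta\to 0^+}$, and why the $\BFtheta$-dependent construction of $\mathcal{X}(\bar{\boldsymbol{x}},\BFtheta)$ is essential. Matching $(1-\beta)/(1-\bar{\varepsilon})$ against $\min\{(1-\beta)/\beta,1\}$ then pins down the boundary $\bar{\varepsilon}=\min\{\beta,1-\beta\}$ exactly.
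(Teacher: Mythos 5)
Your proposal is correct and follows essentially the same route as the paper: the case $\beta=1$ is dispatched trivially, the set $\mathcal{X}(\bar{\boldsymbol{x}},\BFtheta)=\{\boldsymbol{x}:|f(\boldsymbol{x},\BFtheta)|\geq\delta\|\BFtheta\|\}$ (your normalized-direction formulation is the same set by positive homogeneity) is exactly the paper's construction, the divergence clause is verified identically via Assumption \ref{ass:b1}(a) giving $\|\BFtheta\|>M/T$, and the probability clause plus the limit $\bar\varepsilon\uparrow\min\{\beta,1-\beta\}$ matches the paper's $\tau\to 0$ bookkeeping. No gaps.
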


\begin{proof}{Proof.}
The claim is trivial when $\beta = 1$, so we only consider the case $\beta < 1$. By the probabilistic requirement \eqref{eq:thm: positivehomo}, for any sufficiently small $\tau>0$, there exists $\bar \delta > 0$ such that 
\[
\sup_{\{\BFtheta \in \Theta: \| \BFtheta \|  = 1 \}}\mathbb{P}\left(\, \abs{f(\boldsymbol{X}_{D_0}, \BFtheta)} < \bar \delta\,  \right) \leq \max\left\{\frac{2\beta - 1 + \tau}{\beta + \tau}, \frac{\tau}{1-\beta +\tau} \right\}  = 1- \frac{1-\beta}{\max\{\beta, 1-\beta\} + \tau}.
\]
Under Assumption \ref{ass:b1} (b),  with $\mathcal{X}(\BFtheta) \triangleq \left\{\boldsymbol{x} \in \mathcal{X} : \abs{f(\boldsymbol{x}, \BFtheta)} \geq  \bar \delta \| \BFtheta \|\right\},$ we have $\mathbb{P}\left(\boldsymbol{X}_{D_0} \in \mathcal{X}(\BFtheta)\right) \geq (1 - \beta)/(\max\{\beta, 1-\beta\} + \tau)$ for any  $\BFtheta \in \Theta$. Furthermore, by Assumption \ref{ass:b1} (a), for any nontrivial $\bar{\boldsymbol{x}} \in \widehat{\mathcal{X}}$, there exists $T>0$ such that $\displaystyle\abs{f(\bar{\boldsymbol{x}},\BFtheta)}   <  \|\BFtheta\| T.$
Then,
\[
\begin{aligned}
& \displaystyle\lim _{M \rightarrow \infty} \, \inf _{\{\BFtheta\in \Theta:|f(\bar{\boldsymbol{x}}, \BFtheta)|>M\}} \, \inf _{\boldsymbol{x} \in \mathcal{X}( \BFtheta)}|f(\boldsymbol{x}, \BFtheta)| \\
& \qquad \geq  \displaystyle \lim _{M \rightarrow \infty} \, \inf _{\{\BFtheta\in\Theta:  \| \BFtheta \| > M/T \}} \, \inf _{\boldsymbol{x} \in \mathcal{X}( \BFtheta)}|f(\boldsymbol{x}, \BFtheta)| \geq \displaystyle \lim _{M \rightarrow \infty} \frac{M}{T} \bar \delta=\infty.
\end{aligned}
\]
By Proposition \ref{prop: bp_lower},  $\varepsilon^\prime( \hat{\mathcal{S}}_{\alpha}^{\beta}, D_0, f)\geq \min\{\beta, 1-\beta\}  - \tau$ for any sufficiently small $\tau > 0 $ and thus $\varepsilon^\prime( \hat{\mathcal{S}}_{\alpha}^{\beta}, D_0, f)\geq \min\{\beta, 1-\beta\}.$
\Halmos\end{proof}

We make a remark that the probability requirement \eqref{eq:thm: positivehomo} ensures that for all unit-norm parameters $\BFtheta$, the probability mass of the region  $\{\boldsymbol{x}\in \mathcal{X} : | f(\boldsymbol{x}, \BFtheta)| < \delta$\} is uniformly bounded. It can be specialized for linear regression with further interpretations in Section~\ref{sec:bpresults_lp}.

\subsection{Upper Bounds of the Distributional BP of In-CVaR Based Estimators}
\label{sec:ub_incvar}

In this subsection, we show that the distributional BP is upper bounded by $\min\{\beta, 1-\beta\}$ under certain assumptions. First, we show that the distributional BP cannot exceed the upper trimming parameter $\beta$. This illustrates that once the contamination proportion reaches $ \beta $, the smallest $\beta$-fraction of losses could be destroyed by contamination, leading to the breakdown. 

\begin{proposition}
    \label{prop:upper_beta}
    With the regression function $f$ and nominal distribution $D_0$, we have $\varepsilon^\prime( \hat{\mathcal{S}}_{\alpha}^{\beta}, D_0, f) \leq \beta$ for any  $0 \leq \alpha< \beta \leq 1$.
\end{proposition}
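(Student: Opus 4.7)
The plan is to establish the upper bound $\varepsilon^\prime(\hat{\mathcal{S}}_{\alpha}^{\beta}, D_0, f) \leq \beta$ by an explicit adversarial construction: for each $\varepsilon \in (\beta, 1)$ I will build a contaminated distribution under which an In-CVaR minimizer can be forced to take arbitrarily large absolute value at some fixed nontrivial attribute. Since the value-based distributional BP in Definition~\ref{def:d_bp_point} is an infimum over $\widehat{\mathcal{X}}$, exhibiting a single such $\bar{\boldsymbol{x}}$ suffices. The case $\beta = 1$ is immediate because the supremum in Definition~\ref{def:d_bp_point} is taken over $\varepsilon \in [0, 1]$, so I focus on $\beta < 1$.

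By Assumption~\ref{ass:a2}, I pick any $\bar{\boldsymbol{x}} \in \widehat{\mathcal{X}}$ together with a sequence $\{\BFtheta_k\} \subseteq \Theta$ satisfying $\abs{f(\bar{\boldsymbol{x}}, \BFtheta_k)} \to \infty$. For each $k$, I place a Dirac contamination exactly on the graph of $\BFtheta_k$:
\[
D_k \,\triangleq\, (1 - \varepsilon)\, D_0 \,+\, \varepsilon \, \delta_{(\bar{\boldsymbol{x}},\, f(\bar{\boldsymbol{x}}, \BFtheta_k))} \,\in\, \mathcal{B}(D_0, \varepsilon),
\]
which lies in $\mathcal{B}(D_0, \varepsilon)$ since the Dirac portion contributes only a single finite loss value for each $\BFtheta \in \Theta$. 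Under $D_k$ and parameter $\BFtheta_k$, Assumption~\ref{ass:a1} gives $\mathcal{L}(0) = 0$, so the loss distribution carries an atom of mass at least $\varepsilon$ at the point $0$. Since $\beta < \varepsilon$, every quantile level $\gamma \in [\alpha, \beta]$ sits strictly below this atom's mass, forcing $\mbox{VaR}_\gamma\bigl(\mathcal{L}(\abs{f(\boldsymbol{X}_{D_k}, \BFtheta_k) - Y_{D_k}})\bigr) = 0$ and hence
\[
\mbox{In-CVaR}_\alpha^\beta\left(\mathcal{L}\left(\,\abs{f(\boldsymbol{X}_{D_k}, \BFtheta_k) - Y_{D_k}}\right)\right) \,=\, 0.
\]
Since the In-CVaR of a nonnegative loss is itself nonnegative, this is the global minimum, and thus $\BFtheta_k \in \hat{\mathcal{S}}_\alpha^\beta(D_k, f)$.

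Plugging the sequence $\{\BFtheta_k\}$ back into Definition~\ref{def:d_bp_point} yields
\[
\sup_{D \in \mathcal{B}(D_0, \varepsilon)} \, \sup_{\hat{\BFtheta} \in \hat{\mathcal{S}}_\alpha^\beta(D, f)} \, \abs{f(\bar{\boldsymbol{x}}, \hat{\BFtheta})} \,\geq\, \sup_{k} \, \abs{f(\bar{\boldsymbol{x}}, \BFtheta_k)} \,=\, \infty,
\]
so $\varepsilon^\prime(\bar{\boldsymbol{x}}, \hat{\mathcal{S}}_\alpha^\beta, D_0, f) \leq \varepsilon$. Letting $\varepsilon \downarrow \beta$ and then taking the infimum over $\widehat{\mathcal{X}}$ delivers $\varepsilon^\prime(\hat{\mathcal{S}}_\alpha^\beta, D_0, f) \leq \beta$. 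The main subtlety I anticipate is the quantile calculation itself: once strictly more than a $\beta$-fraction of probability mass is piled onto a zero-loss point, every $\gamma$-quantile for $\gamma \in [\alpha, \beta]$ must vanish. Assumption~\ref{ass:a1} together with the strict inequality $\beta < \varepsilon$ makes this step clean, and the remainder is bookkeeping on the plausibility set $\mathcal{D}$ and the definition of $\varepsilon^\prime$.
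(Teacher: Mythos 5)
Your proposal is correct and follows essentially the same route as the paper: place a Dirac contamination at $(\bar{\boldsymbol{x}}, f(\bar{\boldsymbol{x}},\BFtheta_k))$ so that $\BFtheta_k$ attains zero In-CVaR and is therefore a minimizer, then let $|f(\bar{\boldsymbol{x}},\BFtheta_k)|\to\infty$. The only (immaterial) difference is that the paper works directly at contamination level exactly $\beta$ — since $\mathbb{P}(\text{loss}=0)\geq\beta$ already forces $\mbox{VaR}_\gamma=0$ for all $\gamma\leq\beta$ — whereas you use $\varepsilon>\beta$ and pass to the limit.
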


\begin{proof}{Proof.}
    It suffices to show that $\sup_{D \in \mathcal{B}(D_0, \beta)} \,  \sup_{\hat{\BFtheta} \in \hat{\mathcal{S}}_\alpha^\beta(D,f)} \, \big| f(\bar{\boldsymbol{x}}, \hat{\BFtheta}) \big| =  \infty$ for any nontrivial $\bar{\boldsymbol{x}} \in \widehat{\mathcal{X}}$. By Assumption \ref{ass:a2}, for any $n \in \mathbb{N}_+$, there exists $\BFtheta^{(n)} \in \Theta$ such that $\big|{f(\bar{\boldsymbol{x}}, \BFtheta^{(n)})}\big| > n$.  Let $G_n$ be the degenerate distribution supported at the point $(\bar{\boldsymbol{x}}, f(\bar{\boldsymbol{x}}, \BFtheta^{(n)}))$. Then for $D_n = (1 - \beta) D_0 + \beta G_n$, we have $\mbox{In-CVaR}_{\alpha}^{\beta} \left( \mathcal{L}\big(\, \abs{f(\boldsymbol{X}_{D_n}, \BFtheta^{(n)}) - Y_{D_n}} \big) \right)=0,$ which implies that $\BFtheta^{(n)} \in \hat{\mathcal{S}}_\alpha^\beta(D_n,f)$. Therefore, $\sup_{D \in \mathcal{B}(D_0, \beta)} \,  \sup_{\hat{\BFtheta} \in \hat{\mathcal{S}}_\alpha^\beta(D,f)} \, \abs{ f(\bar{\boldsymbol{x}}, \hat{\BFtheta}) } >  n$ for all $n \in \mathbb{N}_+$. Letting $n \to \infty$ yields the conclusion.
\Halmos\end{proof}

To establish another bound that $\varepsilon^\prime( \hat{\mathcal{S}}_{\alpha}^{\beta}, D_0, f) \leq 1 - \beta$, additional conditions on the loss function $\mathcal{L}$ and the regression function $f$ are required. 

\begin{enumerate}[
    label=\textbf{Assumption~(B2):}~,
    ref=(B2),
    leftmargin=0pt,
    labelindent=0pt,
    itemindent=0pt,
    labelwidth=0pt,
    labelsep=0pt,
    listparindent=0pt,
    align=left
    ]
    \item \label{ass:b2}
    For regression function $f: \mathcal{X} \times \Theta_0 \times \Theta_1 \rightarrow \mathcal{Y}$, there exist  $\Tilde{\boldsymbol{x}} \in \widehat{\mathcal{X}}$ and $\Tilde{\BFtheta}_1 \in \Theta_1$ such that
    $$f(\Tilde{\boldsymbol{x}}, \BFtheta_1, \BFtheta_0) = f(\boldsymbol{x}, \Tilde{\BFtheta}_1, \BFtheta_0) = h(\BFtheta_0) \,\, \mbox{ for all }\,\, (\boldsymbol{x}, \BFtheta_1, \BFtheta_0) \in \mathcal{X} \times \Theta_1 \times \Theta_0,$$ 
    where $h: \Theta_0 \rightarrow \mathcal{Y}$ satisfies $\sup_{\BFtheta_0\in \Theta_0} \abs{h(\BFtheta_0)} = \infty$.
\end{enumerate} 

\begin{enumerate}[
    label=\textbf{Assumption~(C1):}~,
    ref=(C1),
    leftmargin=0pt,
    labelindent=0pt,
    itemindent=0pt,
    labelwidth=0pt,
    labelsep=0pt,
    listparindent=0pt,
    align=left
    ]
    \item \label{ass:c1}
    The loss function $\mathcal{L}$ is convex, and there exist $k>1$ and $T>0$ such that $\mathcal{L}(st)/\mathcal{L}(t) \leq s^k$ for all $s \in [0,1]$ and $t>T$.
\end{enumerate}
We list regression functions satisfying Assumption \ref{ass:b2} in Table \ref{tab:verfication}. The convexity and growth conditions in Assumption \ref{ass:c1} hold for the $\ell_2$ loss with $k=2$. Under the above two assumptions, the following theorem provides the upper bound of distributional BP, with the proof in Appendix \ref{app:thm:decom}.

\begin{theorem} 
\label{thm:decom}
Suppose that Assumptions \ref{ass:b2} and \ref{ass:c1} hold. Then with $\beta \in (0,1]$,  $\varepsilon^\prime( \hat{\mathcal{S}}_{\alpha}^{\beta}, D_0, f) \leq 1-\beta$.
\end{theorem}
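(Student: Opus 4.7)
Plan. Fix any $\varepsilon \in (1-\beta, 1]$, set the ``surviving contamination mass'' $\delta := \varepsilon + \beta - 1 > 0$, and let $\tilde{\boldsymbol{x}} \in \widehat{\mathcal{X}}$, $\tilde{\BFtheta}_1 \in \Theta_1$, $h$, and a sequence $\{\BFtheta_0^{(n)}\}\subseteq\Theta_0$ with $M_n := h(\BFtheta_0^{(n)}) \to \infty$ be as guaranteed by Assumption~\ref{ass:b2}. I contaminate by a degenerate distribution $G_n$ at $(\tilde{\boldsymbol{x}}, M_n)$, forming $D_n := (1-\varepsilon) D_0 + \varepsilon\, G_n \in \mathcal{B}(D_0, \varepsilon)$, with the goal of showing $|f(\tilde{\boldsymbol{x}}, \hat{\BFtheta}^{(n)})| = |h(\hat{\BFtheta}_0^{(n)})| \to \infty$ for every $\hat{\BFtheta}^{(n)} \in \hat{\mathcal{S}}_\alpha^\beta(D_n, f)$; by Definition~\ref{def:d_bp_point} this would give $\varepsilon^\prime(\hat{\mathcal{S}}_\alpha^\beta, D_0, f)\leq\varepsilon$, and then $\varepsilon\downarrow1-\beta$ concludes.

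Suppose, for contradiction, that $|h(\hat{\BFtheta}_0^{(n_k)})| \le C$ along some subsequence. The first identity in~\ref{ass:b2} shows that the loss at the contamination point equals $\mathcal{L}(|h(\hat{\BFtheta}_0^{(n_k)}) - M_{n_k}|) \ge \mathcal{L}(M_{n_k} - C)$; since contaminated mass is $\varepsilon > 1-\beta$, at least a $\delta$-fraction of losses within the In-CVaR window $[\alpha,\beta]$ lies above $\mathcal{L}(M_{n_k}-C)$, so that
\[
\mbox{In-CVaR}_\alpha^\beta\!\left(\mathcal{L}\!\left(\,\abs{f(\boldsymbol{X}_{D_{n_k}}, \hat{\BFtheta}^{(n_k)}) - Y_{D_{n_k}}}\right)\right) \,\ge\, \frac{\delta}{\beta-\alpha}\,\mathcal{L}(M_{n_k}-C).
\]
To obtain a conflicting upper bound, I exploit the second identity in~\ref{ass:b2} to build the competitor $\BFtheta^{(n),c} := (\tilde{\BFtheta}_1, \BFtheta_0^c)$ with $h(\BFtheta_0^c) = \lambda M_n$, $\lambda\in(0,1)$ to be optimized, so that $f(\boldsymbol{x},\tilde{\BFtheta}_1,\BFtheta_0^c)\equiv\lambda M_n$ independently of $\boldsymbol{x}$. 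Under $D_n$, the loss is a two-point mixture with $(1-\varepsilon)$-mass on $\mathcal{L}(|\lambda M_n - Y_{D_0}|)$ and $\varepsilon$-mass on $\mathcal{L}((1-\lambda)M_n)$. Convexity of $\mathcal{L}$ together with the growth condition~\ref{ass:c1} gives $\mathcal{L}((1-\lambda)M_n) \le (1-\lambda)^k\mathcal{L}(M_n)$, and, using the tightness of $Y_{D_0}$ inherited from $D_0\in\mathcal{D}$, the relevant quantile of $\mathcal{L}(|\lambda M_n - Y_{D_0}|)$ is at most $\lambda^k\mathcal{L}(M_n)(1+o(1))$. Sorting the mixture and integrating VaR over $[\alpha,\beta]$ then yields
\[
\mbox{In-CVaR}_\alpha^\beta\!\left(\mathcal{L}\!\left(\,\abs{f(\boldsymbol{X}_{D_n}, \BFtheta^{(n),c}) - Y_{D_n}}\right)\right) \,\le\, \frac{(1-\varepsilon-\alpha)\lambda^k + \delta(1-\lambda)^k}{\beta-\alpha}\,\mathcal{L}(M_n)(1+o(1)).
\]

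The scalar minimization $\min_{\lambda\in[0,1]}[(1-\varepsilon-\alpha)\lambda^k+\delta(1-\lambda)^k]$ has an interior optimum $\lambda^*\in(0,1)$ with $\lambda^*/(1-\lambda^*)=(\delta/(1-\varepsilon-\alpha))^{1/(k-1)}$ and value $\delta(1-\lambda^*)^{k-1}$, strictly less than $\delta$ since $\lambda^*>0$. Combining with optimality of $\hat{\BFtheta}^{(n_k)}$ forces $\mathcal{L}(M_{n_k}-C)/\mathcal{L}(M_{n_k}) \le (1-\lambda^*)^{k-1}(1+o(1))$. On the other hand,~\ref{ass:c1} implies $\mathcal{L}(t)/t^k$ is non-decreasing, which combined with convexity and monotonicity of $\mathcal{L}$ yields $\mathcal{L}(M_{n_k}-C)/\mathcal{L}(M_{n_k}) \to 1$ as $n_k\to\infty$; since $(1-\lambda^*)^{k-1}<1$ strictly, this is the sought contradiction, so $|h(\hat{\BFtheta}_0^{(n)})|$ must diverge and the theorem follows. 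The main obstacles I anticipate are (i) the quantile bookkeeping in the upper bound, where the sorted order of data and contamination losses flips depending on $\lambda\lessgtr 1/2$ and on the tail of $Y_{D_0}$, requiring a separate treatment of both regimes with the same end result; and (ii) rigorously pinning down $\mathcal{L}(M_n-C)/\mathcal{L}(M_n)\to 1$ from~\ref{ass:c1} together with convexity of $\mathcal{L}$, for which the monotonicity of $\mathcal{L}(t)/t^k$ and a mean-value argument against $\mathcal{L}'(t)/\mathcal{L}(t)$ appear essential.
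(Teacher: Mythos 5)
Your overall strategy coincides with the paper's: contaminate with a point mass at $(\tilde{\boldsymbol{x}}, M_n)$, compare any purportedly bounded optimizer against the constant competitor predicting $\lambda M_n$, and optimize the resulting coefficient $q\lambda^k + \delta(1-\lambda)^k$ (your $\lambda^*$ and optimal value $\delta(1-\lambda^*)^{k-1}$ agree exactly with the paper's choice of $\lambda$ and its coefficient computation). However, the final step contains a genuine gap. You close the contradiction by asserting that Assumption \ref{ass:c1} forces $\mathcal{L}(M_{n_k}-C)/\mathcal{L}(M_{n_k}) \to 1$, arguing from monotonicity of $\mathcal{L}(t)/t^k$ and convexity. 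This is false. Take $\mathcal{L}(t) = t^k e^{t}$: it is convex, non-decreasing, vanishes at $0$, diverges at infinity, and satisfies $\mathcal{L}(st)/\mathcal{L}(t) = s^k e^{(s-1)t} \le s^k$ for all $s\in[0,1]$ and $t>0$, so Assumptions \ref{ass:a1} and \ref{ass:c1} hold; yet $\mathcal{L}(M-C)/\mathcal{L}(M) = (1-C/M)^k e^{-C} \to e^{-C}$. Since $C$ comes from the contradiction hypothesis and is not under your control, $e^{-C}$ can fall below $(1-\lambda^*)^{k-1}$ and no contradiction results. (Monotonicity of $\mathcal{L}(t)/t^k$ only bounds the ratio \emph{above} by $(1-C/M)^k$; it gives no lower bound.)

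The repair is to normalize both sides by the \emph{same} diverging quantity $\mathcal{L}(M_n - C)$ rather than by $\mathcal{L}(M_n)$: apply Assumption \ref{ass:c1} at $t = M_n - C$, writing $(1-\lambda)M_n = s_n (M_n - C)$ and $\lambda M_n + |Y| \le s_n'(M_n-C)$ with $s_n \to 1-\lambda$ and $s_n' \to \lambda$ (both eventually in $[0,1]$), so the upper bound becomes $\big(q\lambda^k + \delta(1-\lambda)^k\big)(1+o(1))\,\mathcal{L}(M_n-C)/(\beta-\alpha)$ and the strict inequality $q(\lambda^*)^k+\delta(1-\lambda^*)^k < \delta$ closes the argument with no ratio limit needed. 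This is precisely what the paper does, via a convexity split $\mathcal{L}(\lambda|t_n-M| + |\lambda M - Y_{D_0}|) \le s\,\mathcal{L}(\tfrac{\lambda}{s}|t_n-M|) + (1-s)\,\mathcal{L}(\tfrac{1}{1-s}|\lambda M - Y_{D_0}|)$ with $s = 1-\lambda/2 > 1-\lambda$, which simultaneously handles your obstacle (i): the data losses contribute a CVaR (not merely a quantile) over the window, and the unbounded tail of $Y_{D_0}$ produces an additive remainder $R$ independent of $n$ that is dominated once everything is expressed as (coefficient)$\cdot\mathcal{L}(|t_n-M|) + R$. Finally, note that the sorting of data versus contamination losses that you flag is resolved in the paper by restricting $\varepsilon < (2-\alpha-\beta)/2$, which forces $\lambda < 1/2$; since the claim only requires $\varepsilon \downarrow 1-\beta$, this restriction costs nothing.
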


We recognize that several typical robust loss functions such as the $\ell_1$ loss and Huber loss do not satisfy Assumption \ref{ass:c1}. Indeed, the simple example below shows that under the $\ell_1$ loss and the regression function satisfying Assumption \ref{ass:b2}, the distributional BP can exceed $1-\beta$.

\begin{example}
Consider an expectation minimization problem ($\beta=1$) with $f(\boldsymbol{x}, \theta) \equiv \theta$, which satisfies Assumption \ref{ass:b2}. Under the distribution $D$ and the $\ell_1$ loss, the regression problem $\min_{\theta \in \mathbb{R}} \mathbb{E}\left[\abs{\theta - Y_{D}}\right]$ admits the median of $Y_D$ as the estimator, whose distributional BP equals $1/2$.
\end{example}

Although the above example shows that  the $\ell_1$ loss is robust against outlying $y$, it is vulnerable to leverage points, as noted in \cite{rousseeuw2003robust}. This motivates us to identify a class of regression functions characterized by the following assumption for which the In-CVaR based  estimator under robust loss functions  obtains the distributional BP upper bounded by $1 - \beta$.

\begin{enumerate}[
    label=\textbf{Assumption~(B3):}~,
    ref=(B3),
    leftmargin=0pt,
    labelindent=0pt,
    itemindent=0pt,
    labelwidth=0pt,
    labelsep=0pt,
    listparindent=0pt,
    align=left
    ]
    \item \label{ass:b3}
    The regression function $f: \mathcal{X} \times \Theta \rightarrow \mathcal{Y}$ satisfies:
    \begin{enumerate}[label = (\alph*)]
        \item for any fixed $k \in \mathbb{N}_+$, there exists $\BFtheta^{(k)} \in \Theta$ with $\| \BFtheta^{(k)}\| > k$ such that
        \[
        \sup_{\boldsymbol{x} \in \mathcal{X}} \inf_{\{\BFtheta \in \Theta: \| \BFtheta \| \leq k\}} |f(\boldsymbol{x}, \BFtheta^{(k)}) - f(\boldsymbol{x}, \BFtheta)| = \infty.
        \]
        \item  there exists a finite set $\{\Tilde{\boldsymbol{x}}_i\}_{i=1}^m \subseteq \widehat{\mathcal{X}}$ such that 
        \[
        \lim_{k \rightarrow \infty} \inf_{\{\BFtheta \in \Theta: \| \BFtheta \| > k\}}\max_{ i \in [m]}\abs{f(\Tilde{\boldsymbol{x}}_i, \BFtheta)} =  \infty.
        \]
    \end{enumerate}   
\end{enumerate} 
Assumption \ref{ass:b3} (a) postulates the existence of parameters with arbitrarily large norms whose fitted regression values can deviate from those of  bounded parameters. Assumption \ref{ass:b3} (b) requires that the maximum regression value over a finite number of inputs  diverges to infinity as parameters go to infinity. This assumption holds for linear, polynomial, exponential, logarithmic and power regression functions under appropriate attribute and parameter spaces, and the verification for these models is deferred to Appendix \ref{app:assumption_b3}.

\begin{theorem}
    \label{thm:little_o} 
    Suppose that $\beta \in (0,1]$ and Assumption \ref{ass:b3} holds for $\{\Tilde{\boldsymbol{x}}_i\}_{i=1}^m \subseteq \widehat{\mathcal{X}}$, then $\varepsilon^\prime( \hat{\mathcal{S}}_{\alpha}^{\beta}, D_0, f) \leq 1-\beta$.
\end{theorem}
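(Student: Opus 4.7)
The plan is to split on $\beta$. The case $\beta\leq 1/2$ is immediate from Proposition \ref{prop:upper_beta}, since then $\varepsilon^\prime(\hat{\mathcal{S}}_\alpha^\beta,D_0,f)\leq\beta\leq 1-\beta$, so I focus on $\beta>1/2$ and fix an arbitrary $\bar\varepsilon\in(1-\beta,\beta)$. For each $k\in\mathbb{N}_+$ I will construct a contaminated distribution $D_k\in\mathcal{B}(D_0,\bar\varepsilon)$ whose minimizers are forced to satisfy $\|\hat{\BFtheta}^{(k)}\|>k$; combined with Assumption \ref{ass:b3}(b) and a finite-pigeonhole argument over the index set $[m]$, this will make the fitted value at some $\Tilde{\boldsymbol{x}}_{i^\star}\in\widehat{\mathcal{X}}$ diverge over $\mathcal{B}(D_0,\bar\varepsilon)$, yielding $\varepsilon^\prime(\hat{\mathcal{S}}_\alpha^\beta,D_0,f)\leq\bar\varepsilon$. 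Letting $\bar\varepsilon\downarrow 1-\beta$ then completes the proof.

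For the construction, Assumption \ref{ass:b3}(a) supplies $\BFtheta^{(k)}\in\Theta$ with $\|\BFtheta^{(k)}\|>k$, and the infiniteness of the supremum in \ref{ass:b3}(a) lets me pick $\boldsymbol{x}^{(k)}\in\mathcal{X}$ with $\inf_{\{\BFtheta\in\Theta:\|\BFtheta\|\leq k\}}|f(\boldsymbol{x}^{(k)},\BFtheta^{(k)})-f(\boldsymbol{x}^{(k)},\BFtheta)|\geq N_k$ for an $N_k>0$ to be chosen later; I then set $D_k=(1-\bar\varepsilon)D_0+\bar\varepsilon\,\delta_{(\boldsymbol{x}^{(k)},f(\boldsymbol{x}^{(k)},\BFtheta^{(k)}))}$. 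The core of the argument is a quantile comparison under $D_k$ between any $\BFtheta$ with $\|\BFtheta\|\leq k$ and the candidate $\BFtheta^{(k)}$. For any such $\BFtheta$, monotonicity of $\mathcal{L}$ forces the loss on the contamination atom to be at least $\mathcal{L}(N_k)$; since the atom carries mass $\bar\varepsilon>1-\beta$, one has $\mbox{VaR}_\gamma\geq\mathcal{L}(N_k)$ for every $\gamma>1-\bar\varepsilon$, and integrating over $[\alpha,\beta]$ produces the lower bound
\[
\mbox{In-CVaR}_\alpha^\beta\left(\mathcal{L}\left(\,\abs{f(\boldsymbol{X}_{D_k},\BFtheta)-Y_{D_k}}\right)\right)\geq c\,\mathcal{L}(N_k),\qquad c\triangleq\frac{\beta-\max\{\alpha,1-\bar\varepsilon\}}{\beta-\alpha}>0,
\]
where the strict positivity of $c$ crucially uses $\bar\varepsilon>1-\beta$. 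For $\BFtheta^{(k)}$, by contrast, the contamination loss is exactly zero, so a direct two-component mixture-quantile computation bounds its $\mbox{In-CVaR}_\alpha^\beta$ under $D_k$ by a finite quantity $B_k$ depending only on lower quantiles (strictly below the $1$-quantile when $\beta<1$, or the expectation when $\beta=1$) of $\mathcal{L}(\abs{f(\boldsymbol{X}_{D_0},\BFtheta^{(k)})-Y_{D_0}})$, finite by the integrability built into the plausible set $\mathcal{D}$. Because $\mathcal{L}$ is unbounded by Assumption \ref{ass:a1}, I enlarge $N_k$ so that $c\,\mathcal{L}(N_k)>B_k$, and optimality of any $\hat{\BFtheta}^{(k)}\in\hat{\mathcal{S}}_\alpha^\beta(D_k,f)$ against $\BFtheta^{(k)}$ then rules out $\|\hat{\BFtheta}^{(k)}\|\leq k$.

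Once $\|\hat{\BFtheta}^{(k)}\|\to\infty$ is established, Assumption \ref{ass:b3}(b) delivers $\max_{i\in[m]}|f(\Tilde{\boldsymbol{x}}_i,\hat{\BFtheta}^{(k)})|\to\infty$, and pigeonholing over the finite set $[m]$ extracts a fixed index $i^\star$ together with a subsequence $(k_j)$ along which $|f(\Tilde{\boldsymbol{x}}_{i^\star},\hat{\BFtheta}^{(k_j)})|\to\infty$, producing the required witness $\Tilde{\boldsymbol{x}}_{i^\star}$. The main technical hurdle is precisely the upper bound $B_k$: even though $\BFtheta^{(k)}$ may fit $D_0$ arbitrarily poorly as $k$ grows, so $B_k$ need not be uniformly bounded in $k$, its finiteness for each fixed $k$ is enough, since the post-hoc freedom to enlarge $N_k$ via \ref{ass:b3}(a) restores the required strict inequality. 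A secondary delicate point is that the atom's loss for $\BFtheta^{(k)}$ is exactly zero, which lets the $\beta=1$ boundary case (where an essential-supremum would otherwise intrude in $B_k$) be handled cleanly via the explicit mixture-quantile formula.
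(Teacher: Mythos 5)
Your proposal is correct and uses essentially the same construction as the paper: a degenerate contamination of mass $\bar\varepsilon>1-\beta$ placed at $(\boldsymbol{x}^{(k)}, f(\boldsymbol{x}^{(k)},\BFtheta^{(k)}))$ with $\boldsymbol{x}^{(k)}$ supplied by Assumption \ref{ass:b3}(a), a quantile comparison showing bounded-norm parameters incur In-CVaR at least a positive fraction of $\mathcal{L}(N_k)$ while $\BFtheta^{(k)}$ pays only a finite amount, and Assumption \ref{ass:b3}(b) to convert norm divergence into fitted-value divergence. The only differences are organizational: the paper argues by contradiction (assuming all minimizers have fitted values bounded by $M$, hence norms bounded by $K$, then constructing one refuting distribution) and invokes Lemma \ref{lem: bound} for the quantile bounds you re-derive directly, while your direct version requires the extra pigeonhole step to extract a single witness $\Tilde{\boldsymbol{x}}_{i^\star}$; both are sound.
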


\begin{proof}{Proof.}
    Suppose, for contradiction, that $\varepsilon^\prime( \hat{\mathcal{S}}_{\alpha}^{\beta}, D_0, f) > 1-\beta$, then for any $\varepsilon \in  \big(1 - \beta,  \min\{1 - \alpha, \varepsilon^\prime( \hat{\mathcal{S}}_{\alpha}^{\beta}, D_0, f)\}\big)$, there exists $M > 0$ such that $\sup_{D \in \mathcal{B}(D_0, \varepsilon)} \,  \sup_{\hat{\BFtheta} \in \hat{\mathcal{S}}_\alpha^\beta(D,f)} \, \abs{f(\Tilde{\boldsymbol{x}}_i, \hat{\BFtheta}) } < M$ for all $i \in [m]$. By Assumption \ref{ass:b3} (b), there exists $K>0$ such that $ \sup_{D \in \mathcal{B}(D_0, \varepsilon)} \,  \sup_{\hat{\BFtheta} \in \hat{\mathcal{S}}_\alpha^\beta(D,f)} \|\hat{\BFtheta}\| \leq K.$ Under Assumption \ref{ass:b3} (a),  we obtain $\tilde \BFtheta \in \Theta$ with $\|\tilde \BFtheta\| > K$ such that $\sup_{\boldsymbol{x} \in \mathcal{X}} \, \inf_{ \{\BFtheta \in \Theta: \|\BFtheta\| \leq K\}} |f(\boldsymbol{x}, \tilde\BFtheta) - f(\boldsymbol{x}, \BFtheta)| = \infty.$ Therefore, under the blanket assumption \ref{ass:a1}, there exists $\boldsymbol{x}^* \in \mathcal{X}$ such that
    \begin{equation}
    \label{eq:upper_infinity}
    \inf_{\{\BFtheta \in \Theta: \|\BFtheta\| \leq  K\}} \mathcal{L}\left(\,\abs{f(\boldsymbol{x}^*, \BFtheta) - f(\boldsymbol{x}^*, \tilde\BFtheta)}\right) > \frac{1-\varepsilon}{\beta + \varepsilon - 1}\mathbb{E}\left[\mathcal{L}\left(\,\abs{f(\boldsymbol{X}_{D_0},  \tilde\BFtheta ) -Y_{D_0}}\right)\right].
    \end{equation}
    Let $G$ be a degenerate distribution supported at $(\boldsymbol{x}^*, f(\boldsymbol{x}^*,\tilde \BFtheta))$ and set $D = (1-\varepsilon) D_0 + \varepsilon G$. Then for any $  \hat{\BFtheta}_\alpha^\beta(D, f) \in \hat{\mathcal{S}}_\alpha^\beta(D,f)$, by Lemma \ref{lem: bound} (c), we have
    \[ 
    \begin{aligned}
        & \mbox{In-CVaR}_{\alpha}^{\beta} \left(\mathcal{L}\left(\,\abs{f(\boldsymbol{X}_{D},  \hat\BFtheta_\alpha^\beta(D, f) ) -Y_{D}}\right)\right) 
        \geq  \frac{\beta+\varepsilon-1}{\beta-\alpha}\mathcal{L}\left(\,\abs{f(\boldsymbol{x}^*, \hat\BFtheta_\alpha^\beta(D, f)) - f(\boldsymbol{x}^*, \tilde\BFtheta) }\right)\\
        & \qquad \geq \frac{\beta+\varepsilon-1}{\beta-\alpha} \inf_{\{\BFtheta \in \Theta: \|\BFtheta\| \leq K\}} \mathcal{L}\left(\,\abs{f(\boldsymbol{x}^*, \tilde\BFtheta) - f(\boldsymbol{x}^*, \BFtheta)}\right). 
    \end{aligned}        
    \]
    On the other hand, $\mathcal{L}\left(\,\abs{f(\boldsymbol{X}_{G}, \tilde\BFtheta)-Y_{G}}\right) = 0$ with probability 1. By the inequality \eqref{eq:upper_infinity}, we have
    \[
    \begin{aligned}
    & \mbox{In-CVaR}_{\alpha}^{\beta} \left(\mathcal{L}\left(\,\abs{f(\boldsymbol{X}_{D},  \tilde\BFtheta ) -Y_{D}}\right)\right)
    \leq  \frac{1}{\beta - \alpha} \mathbb{E}\left[\mathcal{L}\left(\,\abs{f(\boldsymbol{X}_{D},  \tilde\BFtheta ) -Y_{D}}\right)\right]\\
     & \qquad = \frac{1-\varepsilon}{\beta-\alpha} \mathbb{E}\left[\mathcal{L}\left(\,\abs{f(\boldsymbol{X}_{D_0},  \tilde\BFtheta ) -Y_{D_0}}\right)\right] < \frac{\beta+\varepsilon-1}{\beta-\alpha} \inf_{\{\BFtheta \in \Theta: \|\BFtheta\| \leq K\}} \mathcal{L}\left(\,\abs{f(\boldsymbol{x}^*, \tilde\BFtheta) - f(\boldsymbol{x}^*, \BFtheta)}\right),
    \end{aligned}
    \]
    which creates a contradiction.    
\Halmos\end{proof}

The analysis of Theorems~\ref{thm:decom} and \ref{thm:little_o} immediately leads to the following corollary, which is useful for the qualitative robustness analysis under perturbation in Section \ref{sec:qrao}. 

\begin{corollary}
    \label{cor:ep>beta}
    Suppose  either (a) Assumptions \ref{ass:c1} and \ref{ass:b2} or (b) Assumption \ref{ass:b3} holds. Then, with $\beta \in (0, 1]$ and $\varepsilon \in (1 - \beta,1]$, there is a finite nontrivial point set $\{\bar{\boldsymbol{x}}_i\}_{i=1}^m\subseteq \widehat{\mathcal{X}}$ such that
    \[
    \sup_{D \in \mathcal{B}(D_0, \varepsilon)} \,  \inf_{\hat{\BFtheta} \in \hat{\mathcal{S}}_\alpha^\beta(D,f)} \, \max_{i \in[m]}\abs{f(\bar{\boldsymbol{x}}_i, \hat{\BFtheta}) } = \infty.
    \]
\end{corollary}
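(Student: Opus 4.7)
My plan is to reuse the contradiction constructions already appearing in the proofs of Theorems \ref{thm:decom} and \ref{thm:little_o}, observing that they actually deliver a strictly stronger conclusion than the one stated there: for a fixed finite set of nontrivial attributes and every prescribed threshold $M>0$, one can produce a single contaminated $D \in \mathcal{B}(D_0, \varepsilon)$ such that every optimal parameter drives the regression value above $M$ at one of the fixed points. Letting $M \to \infty$ along this one-parameter family of contaminations then yields the $\sup_D \inf_{\hat{\BFtheta}}$ identity in the corollary.

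For case (b), I would take $\{\bar{\boldsymbol{x}}_i\}_{i=1}^m$ to be the set $\{\tilde{\boldsymbol{x}}_i\}_{i=1}^m$ supplied by Assumption \ref{ass:b3}(b). Given $M>0$, I would first use Assumption \ref{ass:b3}(b) to extract $K>0$ with $\inf_{\|\BFtheta\|>K}\max_i |f(\tilde{\boldsymbol{x}}_i, \BFtheta)|>M$. Then I would replay Theorem \ref{thm:little_o}'s construction at this $K$: invoke Assumption \ref{ass:b3}(a) to choose $\tilde\BFtheta$ with $\|\tilde\BFtheta\|>K$ and $\boldsymbol{x}^* \in \mathcal{X}$ making \eqref{eq:upper_infinity} hold, and set $D = (1-\varepsilon)D_0 + \varepsilon G$ with $G$ the degenerate distribution at $(\boldsymbol{x}^*, f(\boldsymbol{x}^*, \tilde\BFtheta))$. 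Combining the lower InCVaR bound from Lemma \ref{lem: bound}(c) with the upper bound obtained by plugging $\tilde\BFtheta$ into the objective forces every $\hat{\BFtheta} \in \hat{\mathcal{S}}_\alpha^\beta(D, f)$ to satisfy $\|\hat{\BFtheta}\|>K$, hence $\max_i |f(\tilde{\boldsymbol{x}}_i, \hat{\BFtheta})|>M$. Case (a) proceeds analogously with the singleton $\{\tilde{\boldsymbol{x}}\}$ from Assumption \ref{ass:b2}: for given $M$, pick $\BFtheta_0^*$ with $|h(\BFtheta_0^*)|>M$, place a degenerate outlier at an attribute at which $f$ takes value $h(\BFtheta_0^*)$ (using the horizontal identity $f(\boldsymbol{x}, \tilde\BFtheta_1, \BFtheta_0^*) = h(\BFtheta_0^*)$ built into Assumption \ref{ass:b2}), and apply the convexity/growth comparison of Theorem \ref{thm:decom} under Assumption \ref{ass:c1} to exclude every $\hat{\BFtheta}$ with $|f(\tilde{\boldsymbol{x}}, \hat{\BFtheta})| \leq M$ by direct comparison with $(\tilde\BFtheta_1, \BFtheta_0^*)$.

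The main obstacle is bookkeeping rather than a new idea: I need to verify that the two existing contradiction proofs are quantitative in the right variable, so that the contamination $D$ depends only on $M$ (via $K$, respectively $\BFtheta_0^*$) while the finite point set is chosen once and for all. Both proofs already isolate a freely chosen large-norm parameter together with a one-point degenerate contamination built from it, so this rereading is essentially immediate; the only subtlety in case (b) is that the set $\{\tilde{\boldsymbol{x}}_i\}_{i=1}^m$ must serve uniformly across all $M$, which is precisely what the limit form of Assumption \ref{ass:b3}(b) guarantees.
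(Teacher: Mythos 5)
Your overall strategy is exactly what the paper intends: it gives no separate proof of this corollary, asserting only that it ``immediately'' follows from the analysis of Theorems \ref{thm:decom} and \ref{thm:little_o}, and the correct reading is precisely your quantitative one --- fix the finite point set once ($\{\tilde{\boldsymbol{x}}_i\}_{i=1}^m$ from \ref{ass:b3}(b), or the singleton $\tilde{\boldsymbol{x}}$ from \ref{ass:b2}), and for each $M$ build a single contamination forcing every optimizer past $M$. Your case (b) is complete and correct as written: \ref{ass:b3}(b) converts the threshold $M$ into a norm bound $K$, \ref{ass:b3}(a) and \ref{ass:a1} supply $\tilde\BFtheta$ and $\boldsymbol{x}^*$ satisfying \eqref{eq:upper_infinity}, and the Lemma \ref{lem: bound}(c) lower bound versus the expectation upper bound at $\tilde\BFtheta$ excludes every optimizer of norm at most $K$. (Two small bookkeeping items: Lemma \ref{lem: bound}(c) needs $\varepsilon \le 1-\alpha$, but since $\mathcal{B}(D_0,\varepsilon')\subseteq\mathcal{B}(D_0,\varepsilon)$ for $\varepsilon'\le\varepsilon$ it suffices to run the construction at any $\varepsilon'\in(1-\beta,\min\{1-\alpha,\varepsilon\})$; and if $\hat{\mathcal{S}}_\alpha^\beta(D,f)$ is empty the inner infimum is $+\infty$ anyway.)

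The one place you should be careful is case (a). As literally written --- ``place a degenerate outlier at an attribute at which $f$ takes value $h(\BFtheta_0^*)$'' --- the comparison parameter $(\tilde\BFtheta_1,\BFtheta_0^*)$ would interpolate the outlier exactly, and then the comparison fails for $\varepsilon<\min\{1/2,\,1-\alpha\}$: the exactly-interpolating parameter still pays roughly $\mathcal{L}(t)$ on the surviving $D_0$-mass inside the $[\alpha,\beta]$ window, so its In-CVaR is about $\tfrac{\beta-\max\{\alpha,\varepsilon\}}{\beta-\alpha}\mathcal{L}(t)$, which is \emph{not} below the lower bound $\tfrac{\beta+\varepsilon-1}{\beta-\alpha}\mathcal{L}(t-M)$ unless $1-\varepsilon<\max\{\alpha,\varepsilon\}$ (note also that exact interpolation would make Assumption \ref{ass:c1} superfluous, a sign the construction is not the intended one). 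The correct replay of Theorem \ref{thm:decom} places the outlier at $(\tilde{\boldsymbol{x}},t_n)$ while the comparison parameter satisfies $h(\BFtheta_0^{(n)})=\lambda t_n$ for the specific $\lambda=(\beta+\varepsilon-1)^{1/(k-1)}/\bigl((\beta+\varepsilon-1)^{1/(k-1)}+(1-\alpha-\varepsilon)^{1/(k-1)}\bigr)\in(0,1/2)$; it is the convexity and growth bound of \ref{ass:c1} applied to this split that yields the strict coefficient inequality, with the ``sufficiently large $n$'' threshold depending on $M$, which is harmless since you choose a fresh $D_n$ for each $M$. With that correction, the lower bound $\tfrac{\beta+\varepsilon-1}{\beta-\alpha}\mathcal{L}(t_n-M)$ applies to any optimizer with $|f(\tilde{\boldsymbol{x}},\hat\BFtheta)|=|h(\hat\BFtheta_0)|\le M$ and is beaten by $\bar\BFtheta^{(n)}$, so every optimizer exceeds $M$ at $\tilde{\boldsymbol{x}}$, and letting $M\to\infty$ completes the proof with $m=1$.
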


\subsection{Distributional BP of Typical Regression Models}
\label{sec:typical_regression}

We summarize the validity of Assumptions \ref{ass:b1}–\ref{ass:b3} for several common regression models in Table \ref{tab:verfication} with detailed verification provided in Appendix \ref{app:verification}. For linear, polynomial, and logarithmic models which satisfy all three assumptions, by Theorems \ref{thm: positivehomo} and \ref{thm:little_o},  the In-CVaR based estimator obtains the distributional BP at $\min\{\beta,1-\beta\}$ under the probabilistic condition \eqref{eq:thm: positivehomo}. This  implies the optimal BP value $1/2$ could be achieved when $\beta=1/2$. For exponential and power regression models which satisfy Assumptions \ref{ass:b2} and \ref{ass:b3}, by Theorems \ref{thm:decom} and \ref{thm:little_o},  we obtain the upper bound $\min\{\beta, 1-\beta\}$ for the distributional BP.

\begin{table}[h]
\TABLE
{Verification of Assumptions \ref{ass:b1}-\ref{ass:b3} for regression functions. \label{tab:verfication}}
{\begin{tabular}{@{}l@{\quad}l@{\quad}c@{\quad}c@{\quad}c@{}}
\hline\up 
Model & Regression function & \ref{ass:b1}  &  \ref{ass:b2} &  \ref{ass:b3}  \\ \hline\up 
linear regression & $\BFtheta_1^\top \boldsymbol{x} + \theta_0$ & $\checkmark$ & $\checkmark$ & $\checkmark$ \\
piecewise affine regression & ${\max_{i \in [I]}} \{\boldsymbol{a}_i^\top \boldsymbol{x} + b_i\} -{\max_{j \in [J]}}\{\boldsymbol{c}_j^\top \boldsymbol{x} +d_j\}$ &  $\checkmark$ & $\checkmark$ & $\times$ \\
polynomial regression & $\sum_{0 \leq n_1 +  \cdots + n_p \leq n} \theta_{n_1, \cdots, n_p} \prod_{i=1}^p x_i^{n_i}$ & $\checkmark$ & $\checkmark$ & $\checkmark$ \\
exponential regression & $\theta_0\exp(\BFtheta_1^\top \boldsymbol{x}) $ & $\times$  & $\checkmark$ & $\checkmark$ \\
logarithmic regression & $ \theta_0 + \sum_{i=1}^p \theta_i \ln(x_i) $  & $\checkmark$ & $\checkmark$  & $\checkmark$  \\
power regression & $\theta_0\prod_{i=1}^p x_i^{\theta_i}$  & {$\times$} &$\checkmark$ & $\checkmark$ \\
\makecell[l]{neural network under positively \\
homogeneous activation functions} & & \textcolor{red}{$\checkmark$} & $\checkmark$ & $\times$ \down\\ \hline
\end{tabular}}
{Note: \textcolor{red}{$\checkmark$} shows that Assumption \ref{ass:b1} holds under reparameterization}
\end{table}

The remainder of this subsection consists of two parts. In section \ref{sec:bpresults_lp} we present the distributional BP results for linear regression with comparisons to the classical BP results. In section \ref{sec:bpresults_piecewise} we analyze BP for piecewise affine regression, which fails to satisfy Assumption \ref{ass:b3}.

\subsubsection{Linear Regression}
\label{sec:bpresults_lp}

For linear regression function $f\left(\boldsymbol{x}, \BFtheta_1, \theta_0\right)=\BFtheta_1^{\top} \boldsymbol{x}+\theta_0$, by Proposition \ref{prop:d_bp_point_linear}, Theorems \ref{thm: positivehomo} and \ref{thm:little_o}, we obtain the following corollary.

\begin{corollary}
\label{prop_bp_lp}
    For the In-CVaR based estimator $\hat{\mathcal{S}}_\alpha^\beta(D, f)$ under the distribution $D$ and the linear regression function $f$, suppose
    \begin{equation}
    \label{eq:linear_hyperplane}
        \lim _{\delta \rightarrow 0^{+}} \sup _{H \in \mathcal{H}_{p+1}} \mathbb{P}\left(\operatorname{dist}\left(\left(\boldsymbol{X}_{D_0}, 1\right), H\right)<\delta\right) \leq \max \left\{\frac{2 \beta-1}{\beta}, 0\right\},
    \end{equation}
    where $\mathcal{H}_{p+1}$ is the set of all hyperplanes passing through the origin  in $\mathbb{R}^{p+1}$. Then we have $\varepsilon^{\prime}(\hat{\mathcal{S}}_\alpha^\beta, D_0, f)=\varepsilon^*(\hat{\mathcal{S}}_\alpha^\beta, D_0, f)=\min \{\beta, 1-\beta\}.$
\end{corollary}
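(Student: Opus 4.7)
My plan is to read the corollary as a specialization of the general machinery in Sections~\ref{sec:lb_incvar}--\ref{sec:ub_incvar} to the linear regression function. Four ingredients will combine: Proposition~\ref{prop:d_bp_point_linear}, which identifies $\varepsilon^\prime$ with $\varepsilon^*$; Theorem~\ref{thm: positivehomo} for the lower bound $\min\{\beta,1-\beta\}$, after Assumption~\ref{ass:b1} is checked and \eqref{eq:linear_hyperplane} is recast as \eqref{eq:thm: positivehomo}; Theorem~\ref{thm:little_o} for the upper bound $1-\beta$, after Assumption~\ref{ass:b3} is verified; and Proposition~\ref{prop:upper_beta} for the complementary upper bound $\beta$.

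For the lower bound, Assumption~\ref{ass:b1} is immediate: $\Theta=\mathbb{R}^{p+1}$ is a cone, positive homogeneity follows from linearity, and $|f(\bar{\boldsymbol{x}},\BFtheta_1,\theta_0)|\leq\|(\bar{\boldsymbol{x}},1)\|$ whenever $\|(\BFtheta_1,\theta_0)\|=1$. The key identification is geometric: for any unit-norm $\BFtheta=(\BFtheta_1,\theta_0)\in\mathbb{R}^{p+1}$, $|\BFtheta_1^\top\boldsymbol{x}+\theta_0|=|\BFtheta^\top(\boldsymbol{x},1)|=\mbox{dist}((\boldsymbol{x},1),H_\BFtheta)$, where $H_\BFtheta\in\mathcal{H}_{p+1}$ is the origin-hyperplane with normal $\BFtheta$. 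Since the map $\BFtheta\mapsto H_\BFtheta$ is surjective (two-to-one, but $\pm\BFtheta$ yield the same distance and hence the same event), the supremum over unit-norm $\BFtheta$ in \eqref{eq:thm: positivehomo} coincides with the supremum over $\mathcal{H}_{p+1}$ in \eqref{eq:linear_hyperplane}, so the hypothesis of Theorem~\ref{thm: positivehomo} holds and $\varepsilon^\prime(\hat{\mathcal{S}}_\alpha^\beta,D_0,f)\geq\min\{\beta,1-\beta\}$.

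For the upper bound I would verify Assumption~\ref{ass:b3} for linear regression with $\mathcal{X}=\mathbb{R}^p$. For part~(a), take $\BFtheta^{(k)}=((k+1)\BFe_1,0)$; evaluating at $\boldsymbol{x}=M\BFe_1$ and using $|\theta_{1,1}|,|\theta_0|\leq k$ for any $\BFtheta$ with $\|\BFtheta\|\leq k$ gives $|f(\boldsymbol{x},\BFtheta^{(k)})-f(\boldsymbol{x},\BFtheta)|=|(k+1-\theta_{1,1})M-\theta_0|\geq M-k$, which diverges in $M$ since $k+1-\theta_{1,1}\geq 1$. For part~(b), choose $\{\BFe_1,\ldots,\BFe_p,\boldsymbol{0}\}\subseteq\widehat{\mathcal{X}}$; a case split on whether $|\theta_0|$ or $\|\BFtheta_1\|$ dominates yields $\max_i|f(\tilde{\boldsymbol{x}}_i,\BFtheta)|\geq c\|\BFtheta\|$ for some $c>0$ depending only on $p$. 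Theorem~\ref{thm:little_o} therefore supplies $\varepsilon^\prime\leq 1-\beta$; together with $\varepsilon^\prime\leq\beta$ from Proposition~\ref{prop:upper_beta} this yields $\varepsilon^\prime\leq\min\{\beta,1-\beta\}$, and Proposition~\ref{prop:d_bp_point_linear} finally delivers $\varepsilon^\prime=\varepsilon^*$.

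The only substantive step is the geometric translation in the lower-bound argument, namely matching the parameter-space condition \eqref{eq:thm: positivehomo} with the hyperplane-distance condition \eqref{eq:linear_hyperplane} via the unit-normal parametrization of origin-hyperplanes in $\mathbb{R}^{p+1}$. Everything else is routine verification of the blanket assumptions for the linear model, and I do not expect any technical surprises since the heavy lifting has been done in Theorems~\ref{thm: positivehomo} and~\ref{thm:little_o}.
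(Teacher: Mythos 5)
Your proposal is correct and follows essentially the same route as the paper, which obtains the corollary by combining Proposition \ref{prop:d_bp_point_linear}, Theorem \ref{thm: positivehomo} (with condition \eqref{eq:linear_hyperplane} identified with \eqref{eq:thm: positivehomo} via the unit-normal parametrization of origin-hyperplanes, exactly as you describe), and Theorem \ref{thm:little_o} together with the appendix verification of Assumption \ref{ass:b3} for the linear model. Your explicit inclusion of Proposition \ref{prop:upper_beta} to cover the $\beta<1/2$ side of the upper bound is a small but correct completion of the argument.
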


The probabilistic condition \eqref{eq:linear_hyperplane} is a simplification of \eqref{eq:thm: positivehomo}, specialized for the linear case, which requires that $(\boldsymbol{X}_{D_0}, 1)$ does not concentrate near any origin-passing hyperplane in $\mathbb{R}^{p+1}$. The distributional BP obtained in Corollary \ref{prop_bp_lp} aligns with the classical finite-sample BP results in \cite{rousseeuw1984least, hossjer1994rank, rousseeuw2003robust}, while allowing for much more general loss functions. Besides, our BP analysis does not need the general-position requirement that $\BFtheta$ is uniquely determined by any $p+1$ points, which is commonly assumed in the classical BP results.  

In the following examples, we provide a distribution class satisfying \eqref{eq:linear_hyperplane}, and also a distribution violating \eqref{eq:linear_hyperplane}, leading to a distributional BP strictly below $\min\{\beta, 1-\beta\}$.
    
\begin{example}
\label{ex:probabilistic requirement} 
Consider the nominal distribution 
\[
D_0 = \max\left\{\frac{2\beta - 1}{\beta},0\right\} D_{0,1} + \left(1-\max\left\{\frac{2\beta - 1}{\beta},0\right\}\right) D_{0,2},
\]
where $X_{D_{0,2}}$ has a bounded density. Then $D_0$ satisfies the probabilistic requirement \eqref{eq:linear_hyperplane}. The proof is given in Lemma \ref{lem:probabilistic_requirement} in Appendix \ref{app:lem}. 
\end{example}

\begin{example}
Consider the linear regression with $\mathcal{X} = \mathbb{R}^2$. Suppose $\beta \in (1/2,1)$, $\tau \in (0,1-\beta)$ and
\[
D_0 = \frac{2\beta - 1 + 2 \tau}{\beta + \tau} D_{0,1} +\left (1-\frac{2\beta - 1 + 2\tau}{\beta + \tau} \right) D_{0,2},
\]
where $D_{0,1}$ satisfies $\mathbb{P}(\boldsymbol{X}_{D_{0,1},2} = 0, Y_{D_{0,1}} = \boldsymbol{X}_{D_{0,1}, 1} ) = 1$.  It is straightforward to see that $D_0$ violates condition \eqref{eq:linear_hyperplane}. Consider the contamination $G_n$ degenerate at the point $\left((0,1), n\right)$ and define $D_n = (\beta + \tau) D_0 + (1-\beta - \tau) G_n $. Set $\BFtheta^{(n)} = (\BFtheta_1^{(n)}, \theta_0^{(n)}) = ((1, n), 0)$, then $\mathbb{P}\left(\mathcal{L}\left( \, \abs{f(\boldsymbol{X}_{D_n},  \BFtheta^{(n)} ) -Y_{D_n}}\right) = 0\right) \geq \beta + \tau,$ which implies $\BFtheta^{(n)} \in \hat{\mathcal{S}}_\alpha^\beta(D_n, f).$ Since ${\lim}_{n \rightarrow \infty}\| \BFtheta^{(n)}\| = \infty$, Proposition \ref{prop:d_bp_point_linear}  yields $\varepsilon^\prime( \hat{\mathcal{S}}_{\alpha}^{\beta}, D_0, f) \leq 1-\beta-\tau$.
\end{example}

\subsubsection{Piecewise Affine Regression}
\label{sec:bpresults_piecewise}

Let $\BFtheta = (\{(\boldsymbol{a}_i, b_i)\}_{i=1}^I, \{(\boldsymbol{c}_j, d_j)\}_{j=1}^J)$ and consider the piecewise affine regression model
\begin{equation}
\label{eq:piecewise}
f(\boldsymbol{x}, \{(\boldsymbol{a}_i, b_i)\}_{i=1}^I, \{(\boldsymbol{c}_j, d_j)\}_{j=1}^J) =\displaystyle{\max_{i \in [I]}} \{\boldsymbol{a}_i^\top \boldsymbol{x} + b_i\} -\displaystyle{\max_{j \in [J]}}\{\boldsymbol{c}_j^\top \boldsymbol{x} +d_j\},
\end{equation}
which satisfies Assumptions \ref{ass:b1} and \ref{ass:b2}. Thus Theorems \ref{thm: positivehomo} and \ref{thm:decom} can be directly applied to the In-CVaR based estimator of  piecewise affine regression. However, Assumption \ref{ass:b3} (b) is not satisfied even for the simplest case $I=J=1$. To recover the distributional BP for piecewise affine regression under robust loss functions such as the $\ell_1$ loss and Huber loss, we introduce the following mild additional assumption, under which the distributional BP of the In-CVaR based estimator is upper bounded by $1-\beta$.
\begin{enumerate}[
    label=\textbf{Assumption~(C2):}~,
    ref=(C2),
    leftmargin=0pt,
    labelindent=0pt,
    itemindent=0pt,
    labelwidth=0pt,
    labelsep=0pt,
    listparindent=0pt,
    align=left
    ]
\item \label{ass:c2} The loss function $\mathcal{L}$ satisfies $
    \lim_{t\rightarrow \infty, s\rightarrow 0}\mathcal{L}(st)/\mathcal{L}(t) = 0.$
\end{enumerate}

\begin{proposition}    
    \label{prop:upper_piecewiseaffine}
    For the In-CVaR based estimator $\hat{\mathcal{S}}_\alpha^\beta(D, f)$ under the distribution $D$,  the piecewise affine regression function $f$ defined in \eqref{eq:piecewise}, and the loss function satisfying Assumption \ref{ass:c2}, the following holds:
    \begin{enumerate}[label=(\alph*)]
        \item with $\beta \in (0,1)$, we have $\varepsilon^\prime( \hat{\mathcal{S}}_{\alpha}^{\beta}, D_0, f)\leq \min\{\beta, 1-\beta\}$;
        \item when $\beta = 1$ and $D_0$ has bounded support, we have $\varepsilon^\prime( \hat{\mathcal{S}}_{\alpha}^{\beta}, D_0, f) = 0$. 
    \end{enumerate}
\end{proposition}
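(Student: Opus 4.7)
The plan is to combine Proposition~\ref{prop:upper_beta}, which already gives $\varepsilon^\prime(\hat{\mathcal{S}}_\alpha^\beta, D_0, f)\leq \beta$, with a complementary bound $\varepsilon^\prime(\hat{\mathcal{S}}_\alpha^\beta, D_0, f)\leq 1-\beta$. Together they yield the $\min\{\beta,1-\beta\}$ upper bound in (a), and the specialization $\beta=1$ with bounded support yields (b). The key new ingredient is a piecewise affine competitor built from the ReLU pattern that is available whenever $I+J\geq 3$.

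For part (a), I would first dispose of the degenerate case $I=J=1$, where the model reduces to the linear regression $f(\boldsymbol{x})=(\boldsymbol{a}_1-\boldsymbol{c}_1)^\top\boldsymbol{x}+(b_1-d_1)$; since the value-based breakdown point in Definition~\ref{def:d_bp_point} is invariant under reparametrization, Theorem~\ref{thm:little_o} applied to the linear reparametrization (where Assumption~\ref{ass:b3} is readily checked) gives the bound $1-\beta$ directly. Assume next that $I+J\geq 3$. I would fix $\bar{\boldsymbol{x}}\in\widehat{\mathcal{X}}$ with $\|\bar{\boldsymbol{x}}\|$ large, pick a direction $\boldsymbol{u}$ and threshold $s<\boldsymbol{u}^\top\bar{\boldsymbol{x}}$ satisfying $\mathbb{P}(\boldsymbol{u}^\top\boldsymbol{X}_{D_0}>s)<\tau$ for a fixed small $\tau$ with $\tau<(1-\beta)/(1-\varepsilon)$, and, for any $\varepsilon\in(1-\beta,1-\alpha)$, form $D_n=(1-\varepsilon)D_0+\varepsilon G_n$ with $G_n$ degenerate at $(\bar{\boldsymbol{x}},y_n)$, $y_n\to\infty$. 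The competitor is a $\BFtheta^{(n)}$ realizing $f(\boldsymbol{x},\BFtheta^{(n)})=\max\{0,\lambda_n(\boldsymbol{u}^\top\boldsymbol{x}-s)\}$ with $\lambda_n=y_n/(\boldsymbol{u}^\top\bar{\boldsymbol{x}}-s)$, obtainable by zeroing all but two of the affine components inside the $\max_i$--$\max_j$ decomposition.

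The crux is a two-sided comparison of In-CVaR. Under $D_n$, the losses of $\BFtheta^{(n)}$ split into three groups: the contamination (mass $\varepsilon$, loss $0$); the null region $\{\boldsymbol{u}^\top\boldsymbol{x}\leq s\}$ on $D_0$ (mass $\geq (1-\varepsilon)(1-\tau)$, loss $\mathcal{L}(|Y_{D_0}|)$ independent of $n$); and the complement (mass $\leq (1-\varepsilon)\tau<1-\beta$, losses possibly scaling with $y_n$). The top-$(1-\beta)$ trim consumes the entire complement, and Assumption~\ref{ass:c2} is used to show that the transition losses near the ReLU edge remain subdominant relative to $\mathcal{L}(y_n)$; hence $\mbox{In-CVaR}(\BFtheta^{(n)})$ is uniformly bounded in $n$. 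On the other side, any $\hat{\BFtheta}$ with $|f(\bar{\boldsymbol{x}},\hat{\BFtheta})|\leq M$ suffers loss at least $\mathcal{L}(y_n-M)$ on the $\varepsilon$-mass contamination, and the surviving fraction $(\varepsilon+\beta-1)/(\beta-\alpha)>0$ after trimming pushes $\mbox{In-CVaR}(\hat{\BFtheta})\geq \frac{\varepsilon+\beta-1}{\beta-\alpha}\mathcal{L}(y_n-M)\to\infty$ via Assumption~\ref{ass:a1}. Consequently, no bounded-value candidate can be optimal for large $n$, forcing $|f(\bar{\boldsymbol{x}},\hat{\BFtheta}^{(n)})|\to\infty$ for every $\hat{\BFtheta}^{(n)}\in\hat{\mathcal{S}}_\alpha^\beta(D_n,f)$; thus $\varepsilon^\prime(\bar{\boldsymbol{x}},\cdot)\leq\varepsilon$, and letting $\varepsilon\downarrow 1-\beta$ completes part (a).

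Part (b) is a clean specialization: with $\beta=1$ and $D_0$ compactly supported, I can now take $s\geq \sup_{\boldsymbol{x}\in\mathrm{supp}(D_0)}\boldsymbol{u}^\top\boldsymbol{x}$ so that the ReLU competitor satisfies $f(\boldsymbol{X}_{D_0},\BFtheta^{(n)})=0$ almost surely; its In-CVaR stays bounded for every $\varepsilon>0$, whereas any bounded-value candidate has $\mbox{In-CVaR}\geq \frac{\varepsilon}{1-\alpha}\mathcal{L}(y_n-M)\to\infty$ because $\beta=1$ permits no top trimming. This gives $\varepsilon^\prime(\bar{\boldsymbol{x}},\cdot)=0$, hence $\varepsilon^\prime(\hat{\mathcal{S}}_\alpha^\beta, D_0, f)=0$. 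The main obstacle throughout is the probability-mass bookkeeping at the ReLU transition when $D_0$ is not compactly supported: one must simultaneously arrange for the transition mass to fall below the top-trim level $1-\beta$ and, through Assumption~\ref{ass:c2}, for its loss magnitude to be dominated by the contaminated loss $\mathcal{L}(y_n-M)$ suffered by bounded candidates; a secondary technicality is reconciling the overparametrized piecewise affine form with the genuinely linear case $I=J=1$ through the reparametrization-invariance of value-based BP.
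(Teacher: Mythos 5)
Your argument is correct in its essentials but follows a genuinely different route from the paper. The paper's proof contaminates with a \emph{uniform distribution over $m$ points lying on a parabola} $(kn\BFe_1,\pm k^2n^2)$, calibrated so that the bottom $(\beta+\varepsilon-1)/\varepsilon$ quantile of the contamination involves exactly $2(I+J-1)$ points; a pigeonhole-plus-Vandermonde argument then shows that any candidate bounded at the test points must incur a loss of order $\mathcal{L}(n^2/6)$ on one of these points, while an explicit piecewise affine interpolant of $2(I+J-1)$ parabola points has loss of order $\mathcal{L}(nT)$ under $D_0$ on bounded sets — and Assumption \ref{ass:c2} is invoked precisely to make $\mathcal{L}(nT)=o(\mathcal{L}(n^2/6))$. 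You instead place a \emph{single point mass of weight $\varepsilon>1-\beta$ at the test point itself}, $(\bar{\boldsymbol{x}},y_n)$, and exploit the one piece of structure specific to piecewise affine models with $I+J\ge 3$: a ReLU-type competitor can interpolate $(\bar{\boldsymbol{x}},y_n)$ while vanishing identically on a region carrying all but $\tau$ of $D_0$'s mass, so that $\mbox{In-CVaR}\le\mbox{VaR}_\beta$ stays bounded in $n$ once $(1-\varepsilon)(\tau+\tau')\le 1-\beta$, whereas any candidate with $|f(\bar{\boldsymbol{x}},\hat\BFtheta)|\le M$ pays $\mbox{VaR}_\gamma\ge\mathcal{L}(y_n-M)$ on the interval $\gamma\in(1-\varepsilon,\beta]$ of positive length. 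Your route is shorter, avoids the case analysis entirely, and works in any dimension without routing the contamination along a coordinate axis; notably, it does not actually use Assumption \ref{ass:c2} anywhere (your remark that \ref{ass:c2} controls the "transition losses" is a red herring — those losses are simply absorbed by the top-$(1-\beta)$ trim, not compared in magnitude), so if written out it would prove the proposition under \ref{ass:a1} alone. Two small points to tighten: the $I=J=1$ reduction to linear regression is a many-to-one reparameterization, so you should justify it via the fact that the value-based BP depends only on the realized function class and the set of optimal regression functions rather than by citing one-to-one invariance; and the order of quantifiers ($\tau$ and $\tau'$ fixed from $\beta,\varepsilon$ first, then $s$, then $\bar{\boldsymbol{x}}$ with $\boldsymbol{u}^\top\bar{\boldsymbol{x}}>s$, then $M$ from the contradiction hypothesis, then $n\to\infty$) should be made explicit, since $M$ depends on $\bar{\boldsymbol{x}}$.
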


\begin{proof}{Proof Sketch.}
We provide only a sketch for $\beta \in (1/2, 1)$ and $p = 1$ here and defer the technical details to Appendix \ref{app:upper_piecewiseaffine}. By contradiction, suppose that $\varepsilon'(\hat{\mathcal{S}}_\alpha^\beta, D_0, f) > 1 - \beta$. Then, by Lemma \ref{lem: bound} (c), for any $\varepsilon \in (1 - \beta, \min\{\beta, 1 - \alpha, \varepsilon^\prime( \hat{\mathcal{S}}_{\alpha}^{\beta}, D_0, f)\})$, we have
\begin{equation}
\label{eq:piecewise_contra1}
    \mbox{In-CVaR}_{\alpha}^{\beta} \left(\mathcal{L}\,\left(\,\abs{f(X_D, \BFtheta)-Y_D}\right)\right)   \geq   \frac{\beta + \varepsilon - 1}{\beta - \alpha} \mbox{In-CVaR}_0^{(\beta + \varepsilon - 1)/\varepsilon }\left(\mathcal{L}\left(\,\abs{f\left({X}_{G},  \BFtheta \right) -Y_{G}}\right)\right).
\end{equation}
Moreover, if $\eta = \mathbb{P}\left(\mathcal{L}\,\left(\,\abs{f({X}_{G}, \BFtheta)-Y_G}\right) = 0 \right)  > (\beta + \varepsilon -1)/\varepsilon$, then Lemma \ref{lem: bound} (b) yields
\begin{equation}
\label{eq:piecewise_contra2}
    \mbox{In-CVaR}_{\alpha}^{\beta} \left(\mathcal{L}\,\left(\,\abs{f({X}_D, \BFtheta)-Y_D}\right)\right)  \leq \mbox{VaR}_{(\beta - \varepsilon \eta)/(1 -\varepsilon)}\left(\mathcal{L}\left(\,\abs{f\left({X}_{D_0},  \BFtheta \right) -Y_{D_0}}\right)\right).
\end{equation}
To produce a contradiction to the inequalities above, we take $G_n$ uniformly distributed over $\{(kn, k^2 n^2): k \in \mathcal{K}_+\} \cup \{(kn, -k^2 n^2): k \in \mathcal{K}_-\}$, where $\mathcal{K}_+, \mathcal{K}_- \subseteq \mathbb{N}$, $\left|\mathcal{K}_+\right| = 2I$, and
\[
\frac{2(I + J - 1)}{\left|\mathcal{K}_+\right| + \left|\mathcal{K}_-\right|} > \frac{\beta + \varepsilon - 1}{\varepsilon} > \frac{2(I + J - 1) - 1}{\left|\mathcal{K}_+\right| + \left|\mathcal{K}_-\right|}.
\]
This ensures that the smallest $(\beta + \varepsilon - 1)/\varepsilon$-quantile of the loss under $G_n$ involves exactly $2(I + J - 1)$ points. Choose $\BFtheta^{(n)}$ so that $f(\bullet, \BFtheta^{(n)})$ interpolates $2(I + J - 1)$ points  exactly, which forces $ \mathbb{P}\big(\mathcal{L}\,\big(\,\big|{f({X}_{G_n}, \BFtheta^{(n)})-Y_{G_n}}\big|\big) = 0 \big) > (\beta + \varepsilon -1)/\varepsilon.$ We show that under appropriate $\mathcal{K}_+$ and $\mathcal{K}_-$, for sufficiently large $n$ and any $\hat{\BFtheta}_\alpha^\beta(D_n,f) \in \hat{\mathcal{S}}_\alpha^\beta(D_n,f)$, 
\[
\begin{aligned}
    &  \mbox{VaR}_{(\beta - \varepsilon \eta)/(1 -\varepsilon)}\left(\mathcal{L}\left(\,\abs{f\big({X}_{D_0},  \BFtheta^{(n)} \big) -Y_{D_0}}\right)\right)\\
    & \qquad < \frac{\beta + \varepsilon - 1}{\beta - \alpha}\mbox{In-CVaR}_0^{(\beta + \varepsilon - 1)/\varepsilon }\left(\mathcal{L}\left(\,\abs{f\big({X}_{G_n}, \hat{\BFtheta}_\alpha^\beta(D_n,f) \big) -Y_{G_n}}\right)\right),
\end{aligned}
\]
which yields a contradiction to \eqref{eq:piecewise_contra1} and \eqref{eq:piecewise_contra2}.
\Halmos \end{proof}

\section{Qualitative Statistical Robustness of In-CVaR Based Estimators}
\label{sec:qrao}

In this section, we analyze qualitative robustness of the In-CVaR based estimator under perturbation. The concept of qualitative statistical robustness, which concerns topological continuity, is introduced in \cite{hampel1971general}, \citet[p.41]{huber2009robust}. It is shown by \cite{cont2010robustness} that the In-CVaR risk measure is qualitatively robust if and only if the extreme tails are trimmed. Motivated by this robustness property, we present a  crucial result in this section that the In-CVaR based statistical estimator possesses the qualitative robustness if and only if the extreme losses are trimmed. 

With the nominal distribution $D_0\in \mathcal{D}$ and a perturbed distribution $D \in\mathcal{D}$ around $D_0$, suppose we observe $ \{(\boldsymbol{x}_{D}^{(i)},y_{D}^{(i)})\}_{i=1}^n $ from $D$ with empirical distribution denoted by $D^{(n)}$. Recall that the In-CVaR based estimator under $D^{(n)}$ and the regression function $f$ is
\[
\hat{\mathcal{S}}_\alpha^\beta(D^{(n)},f) = \argmin_{\BFtheta \in \Theta} \, \mbox{In-CVaR}_{\alpha}^{\beta} \left( \mathcal{L}\left(\, \abs{f(\boldsymbol{X}_{D^{(n)}}, \BFtheta) - Y_{D^{(n)}}} \right) \right).
\]
We focus on whether the observed empirical estimator $ \hat{\mathcal{S}}_\alpha^\beta(D^{(n)}, f)$ reliably approximates the true estimator $\hat{\mathcal{S}}_\alpha^\beta(D_0 , f)$ for sufficiently large samples. Following the consistency analysis  in \cite{shapiro2021lectures}, we adopt the deviation $\mathbb{D}(A, B) \triangleq \sup_{\boldsymbol{x} \in A}\mbox{dist}(\boldsymbol{x}, B)$ to quantify the distance between the two sets $A$ and $B$. We introduce the definition of qualitative robustness as follows. 

\begin{definition}[Qualitative Robustness of Statistical Estimator]
\label{def_qualitative_robust}

Let $\hat{{T}}(D, f)$ represent the statistical estimator under the distribution $D$ and regression model $f$.  For the nominal distribution $D_0 \in \mathcal{D}$, the statistical estimator $\hat{{T}}(\bullet, f)$ is said to be qualitatively robust  at $D_0$ with respect to the metric $d$ if, for every $\varepsilon > 0$, there exists a $\delta > 0$ such that, for all perturbed distributions $D$ with $d(D, D_0) \leq \delta$,
\[
\mathbb{D}\big( \hat{{T}}(D^{(n)} , f), \hat{{T}}(D_0 , f)\big) \leq \varepsilon \, \, \text{with probability 1 (w.p.1) as} \,\, n \rightarrow \infty,
\] 
where the deviation $\mathbb{D}(A, B) \triangleq \sup_{\boldsymbol{x} \in A}\mbox{dist}(\boldsymbol{x}, B)$ for any two sets $A$ and $B$. 
\end{definition}

Definition \ref{def_qualitative_robust} directly addresses convergence of the deviation of the empirical estimator w.p.1  as sample size increases and the perturbed distribution $D$ is close to the nominal distribution $D_0$. To quantify the discrepancy between two distributions, it is common to choose the metric $d$ in order to induce the weak topology  \citep{hampel1971general, huber2009robust, cont2010robustness} or the $\psi$-weak topology \citep{kratschmer2012qualitative, kratschmer2014comparative, kratschmer2017domains, guo2023statistical, zhang2024statistical} on probability measures.  Following \cite{cont2010robustness} which employs the Lévy metric, we adopt the Prokhorov metric, an extension of the Lévy metric to multivariate spaces. For the sake of convenience, let $\mathcal{Z} \triangleq \mathcal{X} \times \mathcal{Y}$ and $\boldsymbol{Z} \triangleq (\boldsymbol{X},Y)$. 

\begin{definition}[Prokhorov metric]
    \label{def:Prok} 
     The Prokhorov metric between two distributions $D_1, D_2 \in \mathcal{D}$ is defined as
    \[
    d_{\text{Prok}}(D_1, D_2) \triangleq \inf_{\varepsilon > 0} \left\{\varepsilon  : \mathbb{P}( \boldsymbol{Z}_{D_1} \in A) \leq \mathbb{P} ( \boldsymbol{Z}_{D_2} \in \mathcal{B}(A, \varepsilon))+\varepsilon  \text { for all } A \in \mathfrak{B} \right\},
    \]
    where $\mathfrak{B}$ denotes the Borel $\sigma$-algebra on $\mathcal{Z}$. The $\delta$-Prokhorov ball centered at $D_0 \in \mathcal{D}$ is defined as
    \[
    \mathcal{B}_{\text{Prok}}(D_0, \delta) \triangleq \{D \in \mathcal D, d_{\text{Prok}}(D, D_0) \leq \delta\}.
    \]
\end{definition}

It is worth mentioning that Definition 3.1 in \cite{zhang2024statistical} introduces a similar concept to characterize the robustness via the deviation in terms of the Prokhorov metric between the laws of two empirical estimators under $D_0$ and $D$ for the sample size to be sufficiently large. By definition, they allow the two empirical estimators to be far apart with the arbitrarily small probability, whereas we require that the empirical estimator under $D$ converges to the true estimator with probability one as $n \to \infty$.   

Besides the blanket assumptions \ref{ass:a1} and \ref{ass:a2} in Section \ref{sec:BP}, our qualitative robustness analysis needs the following additional assumptions.
\begin{enumerate}[
    label=\textbf{Assumption~(A\arabic*):}~,
    ref=(A\arabic*),
    start=3,
    leftmargin=0pt,
    labelindent=0pt,
    itemindent=0pt,
    labelwidth=0pt,
    labelsep=0pt,
    listparindent=0pt,
    align=left
    ]
    \item \label{ass:a3} The parameter space $\Theta$ is closed.
    \item \label{ass:a4} With the nominal distribution $D_0$ and the regression function $f$, the true estimator $\hat{\mathcal{S}}_\alpha^\beta(D_0,f)$ is contained in a compact set. 
    \item \label{ass:a5} The loss function $\mathcal{L}(\bullet)$ is continuous on $\mathbb R_+$ and the regression function $f(\bullet, \bullet)$ is continuous on $\mathcal X \times \Theta$.
    \item \label{ass:a6} For any $D \in \mathcal{D}$ and compact subset $C \subseteq \Theta$, there exists a function  $\psi(\bullet, \bullet): \mathcal{X} \times \mathcal{Y} \to \mathbb{R}_+$ such that $\psi(\boldsymbol{X}_D, Y_D)$ is integrable and
    \[
    \mathcal{L}(\,\abs{f(\boldsymbol{X}_D, \BFtheta) - Y_D}) \leq \psi(\boldsymbol{X}_D, Y_D), \quad \forall \BFtheta \in C.
    \]
    \item \label{ass:a7} The observed samples $ \{(\boldsymbol{x}_{D}^{(i)}, y_{D}^{(i)})\}_{i=1}^n $ are independent and identically distributed (i.i.d.).
\end{enumerate}
Assumptions \ref{ass:a3}-\ref{ass:a7} are standard assumptions for qualitative robustness analysis under perturbation. Similar conditions to Assumptions \ref{ass:a5} and \ref{ass:a6} are adopted in \cite{zhang2024statistical} for kernel learning estimators.

To analyze the sufficient conditions that ensure the qualitative robustness of the In-CVaR based estimator, we decompose the deviation as follows,
\[
\mathbb{D}\big( \hat{\mathcal{S}}_\alpha^\beta(D^{(n)} , f), \hat{\mathcal{S}}_\alpha^\beta(D_0 , f)\big) \leq  \mathbb{D}\big(\hat{\mathcal{S}}_\alpha^\beta(D^{(n)}, f), \hat{\mathcal{S}}_\alpha^\beta(D, f) \big) + \mathbb{D}\big( \hat{\mathcal{S}}_\alpha^\beta(D , f), \hat{\mathcal{S}}_\alpha^\beta(D_0 , f)\big).
\]
The first term corresponds to the consistency of the estimator $\hat{\mathcal{S}}_\alpha^\beta(\bullet, f)$ at the perturbed distribution $D$ (Theorem \ref{thm:consistency}), while the second term captures the stability of the population estimator at $D_0$ (Theorem \ref{thm:stability}). In Figure \ref{fig:sufficient}, we outline the analytical framework for sufficient conditions for qualitative robustness. In Section \ref{sec:consistency}, we establish the consistency result. We then develop sufficient conditions for qualitative robustness in Section \ref{sec:sufficient_qrao} and provide the corresponding necessary conditions in Section \ref{sec:necessary_qrao}.

\begin{figure}[h]
    \FIGURE
    {
    \small
    \begin{tikzpicture}[
    >=Latex, 
    line width=0.8pt 
    ]
    \draw[->] (9, 1) -- (9, -0.9);
    \draw[->] (15, 1.2) -- (9.9, 1.2);
    \draw (15, 1.2) -- (15, -1.2);
    \draw[->] (15, -1.2) -- (9.9, -1.2);
    \node[fill=white, align=center, font=\small] at (8.2, 0) {Stability};
    \node[fill=white, align=right] at (9, 1.2) {
        ${\hat{\mathcal{S}}}_{\alpha }^{\beta}\left( {{D},f}\right)$
    };   
    \node[fill=white, align=right] at (9, -1.2) {
        ${\hat{\mathcal{S}}}_{\alpha }^{\beta}\left( {{D}_{0},f}\right)$
    };
    \node[align=center, font=\small] at (11.8, 1.5) {
    Consistency};
    \node[fill=white, align=right] at (15, 1.2) {
    \( {\hat{\mathcal{S}}}_{\alpha }^{\beta }( {{D}^{(n)},f}) \)};
    \node[fill=white, align=center, font=\small] at (12.3, -0.9) {Qualitative robustness};
    \end{tikzpicture}
    }
    {Logical flowchart of the sufficient conditions for qualitative robustness.
    \label{fig:sufficient}}
    {}
\end{figure} 

\subsection{Consistency of In-CVaR Based Estimator}
\label{sec:consistency}

In this subsection, we study the consistency of the In-CVaR based estimator. For any  $D \in \mathcal{D}$, a statistical estimator $ \hat{T}(\bullet, f)$ is said to be consistent at $D$ if 
\[
\mathbb{D}\big( \hat{{T}}(D^{(n)} , f), \hat{{T}}(D , f)\big) \rightarrow 0\,\, \text{w.p.1 as} \,\, n \rightarrow \infty.
\]
According to Theorem 1 in \cite{laguel2021superquantiles}, we could obtain the uniform convergence of In-CVaR based on the uniform convergence of CVaR. Nevertheless, their results rely on restrictive assumptions that the loss function is almost surely bounded and globally Lipschitz continuous, which are violated in ordinary least squares regression with unbounded $(\boldsymbol{X}_D, Y_D)$ and the $\ell_2$ loss. Therefore, in Theorem \ref{thm:consistency},  we provide an alternative analysis for uniform convergence of CVaR under a mild assumption that both the distributional and empirical estimators lie within a compact set.

\begin{theorem}[Consistency of In-CVaR based estimators]
    \label{thm:consistency}
    Given a distribution $D$, suppose that there exists a compact set $C \subseteq \Theta$ such that
    (i) the set $\hat{\mathcal{S}}_\alpha^\beta(D,f)$ is nonempty and contained in $C$; and
    (ii) w.p.1 for all sufficiently large $n$, the set $\hat{\mathcal{S}}_\alpha^\beta(D^{(n)}, f)$ is nonempty and contained in $C$. Then the following assertions hold:
    \begin{enumerate}[label=(\alph*)]
        \item {\rm $ \mbox{In-CVaR}_\alpha^\beta \left(\mathcal{L}(\,\abs{f(\boldsymbol{X}_D, \bullet) - Y_D})\right)$} is continuous on $C$.
        \item {\rm $ \mbox{In-CVaR}_\alpha^\beta \left(\mathcal{L}\left(\,\abs{f(\boldsymbol{X}_{D^{(n)}}, \BFtheta) - Y_{D^{(n)}}}\right)\right)$} converges to {\rm $ \mbox{In-CVaR}_\alpha^\beta \left(\mathcal{L}(\,\abs{f(\boldsymbol{X}_D, \BFtheta) - Y_D})\right)$} w.p.1, as $n \to \infty$, uniformly on $C$. 
        \item $\hat{\mathcal{S}}_\alpha^\beta(\bullet, f)$ is consistent at $D$.
    \end{enumerate}
\end{theorem}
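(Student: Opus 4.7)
The plan is to reduce everything to the well-studied CVaR via the identity
\[
\mbox{In-CVaR}_\alpha^\beta(Z) \;=\; \frac{(1-\alpha)\,\mbox{CVaR}_\alpha(Z) - (1-\beta)\,\mbox{CVaR}_\beta(Z)}{\beta - \alpha}
\]
for $\beta < 1$ (with $\mbox{In-CVaR}_\alpha^1 = \mbox{CVaR}_\alpha$), and then to use the Rockafellar--Uryasev variational representation $\mbox{CVaR}_\gamma(Z) = \inf_{t \in \mathbb{R}}\{t + (1-\gamma)^{-1}\mathbb{E}[(Z - t)_+]\}$. Writing $Z(\BFtheta) \triangleq \mathcal{L}(\abs{f(\boldsymbol{X}_D, \BFtheta) - Y_D})$, Assumption \ref{ass:a6} furnishes an integrable envelope $\Psi \triangleq \psi(\boldsymbol{X}_D, Y_D)$ with $Z(\BFtheta) \leq \Psi$ for every $\BFtheta \in C$, so by stochastic dominance $\mbox{VaR}_\gamma(Z(\BFtheta)) \leq \mbox{VaR}_\gamma(\Psi) \triangleq M_\gamma$ uniformly on $C$. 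Consequently, the Rockafellar--Uryasev infimum can be restricted to a fixed bounded interval $[0, M_\gamma]$ uniformly in $\BFtheta$, which is the structural feature enabling all three parts.

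For part (a), the map $(\BFtheta, t) \mapsto t + (1-\gamma)^{-1}\mathbb{E}[(Z(\BFtheta) - t)_+]$ is continuous on $C \times [0, M_\gamma]$: continuity of the integrand follows from Assumption \ref{ass:a5}, and the dominating bound $\Psi + M_\gamma$ together with the dominated convergence theorem justifies passing limits through the expectation. Taking the infimum over the compact interval $[0, M_\gamma]$ preserves continuity by Berge's maximum theorem, so $\BFtheta \mapsto \mbox{CVaR}_\gamma(Z(\BFtheta))$ is continuous on $C$, and thus $\mbox{In-CVaR}_\alpha^\beta(Z(\bullet))$ is too.

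For part (b), I would invoke a uniform strong law of large numbers (for instance, the version in \cite{shapiro2021lectures}) applied to the continuous family $(\boldsymbol{x}, y) \mapsto t + (1-\gamma)^{-1}(\mathcal{L}(\abs{f(\boldsymbol{x}, \BFtheta) - y}) - t)_+$ indexed over the compact set $C \times [0, M_\gamma]$. The i.i.d.\ hypothesis \ref{ass:a7} and the integrable envelope $\Psi + M_\gamma$ yield uniform w.p.1 convergence of the empirical expectations. Since the infimum over $t \in [0, M_\gamma]$ is a contraction that preserves uniform convergence, this transfers to $\sup_{\BFtheta \in C}\abs{\mbox{CVaR}_\gamma(Z^{(n)}(\BFtheta)) - \mbox{CVaR}_\gamma(Z(\BFtheta))} \to 0$ w.p.1, and hence, by linearity in the identity above, to the corresponding uniform convergence of $\mbox{In-CVaR}_\alpha^\beta$ on $C$.

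For part (c), a standard argmin stability argument closes the loop. Write $F_n(\BFtheta)$ and $F(\BFtheta)$ for the In-CVaR objectives under $D^{(n)}$ and $D$, and let $\hat{\BFtheta}_n \in \hat{\mathcal{S}}_\alpha^\beta(D^{(n)}, f) \subseteq C$ eventually w.p.1 by hypothesis (ii). Compactness of $C$ lets every subsequence extract a further convergent sub-subsequence $\hat{\BFtheta}_{n_k} \to \hat{\BFtheta}^*$; combining (a) and (b) gives $F(\hat{\BFtheta}^*) = \lim_k F_{n_k}(\hat{\BFtheta}_{n_k}) \leq \lim_k F_{n_k}(\BFtheta) = F(\BFtheta)$ for any $\BFtheta \in \hat{\mathcal{S}}_\alpha^\beta(D,f)$, so $\hat{\BFtheta}^* \in \hat{\mathcal{S}}_\alpha^\beta(D,f)$ and a subsequence argument yields $\mathbb{D}(\hat{\mathcal{S}}_\alpha^\beta(D^{(n)}, f), \hat{\mathcal{S}}_\alpha^\beta(D, f)) \to 0$ w.p.1. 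The main obstacle I anticipate is part (b): while pointwise convergence of CVaR is a direct Glivenko--Cantelli consequence, lifting it to uniform convergence over $\BFtheta \in C$ is delicate because CVaR is itself an infimum, and one must truncate the Rockafellar--Uryasev dual variable to a bounded interval uniformly in $\BFtheta$---which is precisely where the envelope from Assumption \ref{ass:a6} becomes indispensable.
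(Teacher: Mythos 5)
Your argument is correct in outline but takes a genuinely different route from the paper's. The paper never passes through the Rockafellar--Uryasev variational representation: for part (a) it uses the nonexpansiveness of CVaR in $L^1$ (citing Theorem~3 of \cite{rockafellar2014random}) to bound the In-CVaR increment by $\tfrac{2}{\beta-\alpha}\mathbb{E}\bigl[\,\lvert \mathcal{L}(\lvert f(\boldsymbol{X}_D,\BFtheta_k)-Y_D\rvert)-\mathcal{L}(\lvert f(\boldsymbol{X}_D,\BFtheta)-Y_D\rvert)\rvert\,\bigr]$ and then applies dominated convergence; for part (b) it hand-rolls a stochastic-equicontinuity argument (shrinking neighborhoods $V_k$, an envelope $\Delta_k$, the ordinary SLLN, a finite subcover of $C$, and pointwise convergence of empirical In-CVaR from Example~2.11 of \cite{cont2010robustness}); part (c) is the same argmin-stability step in both proofs, via Proposition~5.2 and Theorem~5.3 of \cite{shapiro2021lectures}. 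Your version replaces the $L^1$-Lipschitz property with the dual representation plus Berge's theorem, and outsources the bracketing to an off-the-shelf uniform LLN over the compact index set $C\times[0,M_\gamma]$; this is cleaner and arguably more modular, while the paper's version is more self-contained and avoids any discussion of where the dual infimum is attained.

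That last point is the one place your write-up has a genuine (though fixable) gap. You restrict the Rockafellar--Uryasev infimum for the \emph{empirical} CVaR to the same interval $[0,M_\gamma]$ with $M_\gamma=\mbox{VaR}_\gamma(\Psi)$ computed under the population law. But the empirical infimum is attained at the empirical $\gamma$-quantile of the losses, which for finite $n$ can exceed $M_\gamma$; the contraction inequality $\lvert\inf_t a(t)-\inf_t b(t)\rvert\le\sup_t\lvert a(t)-b(t)\rvert$ only helps if both infima are taken over the same compact set. The fix is routine: take $M_\gamma'=\mbox{VaR}_{\gamma'}(\Psi)$ for some $\gamma'\in(\gamma,1)$; since the sample losses are dominated pointwise by the sample values of $\Psi$, the empirical $\gamma$-quantile of $Z^{(n)}(\BFtheta)$ is bounded by the empirical $\gamma$-quantile of $\Psi$, which is at most $M_\gamma'$ w.p.1 for all large $n$ by the scalar Glivenko--Cantelli theorem, uniformly in $\BFtheta\in C$. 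With the interval enlarged to $[0,M_\gamma']$ your part (b) goes through as stated.
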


The proof is provided in Appendix \ref{app:thm:consistency}. Indeed, in the analysis of Theorem \ref{thm:consistency}, we can further relax the continuity assumption on the regression function in Assumption \ref{ass:a5}: the proof remains valid with $f(\boldsymbol{x}, \bullet)$ being continuous for almost every $\boldsymbol{x} \in \mathcal{X}$. 

\subsection{Sufficient Conditions for Qualitative Robustness}
\label{sec:sufficient_qrao}

In this subsection, we present sufficient conditions for qualitative robustness of the In-CVaR based estimator. Assuming the following assumption, together with Assumption \ref{ass:b1}, we first show that the In-CVaR based estimators $\hat{\mathcal{S}}_\alpha^\beta(D, f)$, for any distribution $D$ in a sufficiently small neighborhood of $D_0$, remain within a compact set,  which is presented in Proposition \ref{prop:compactness_neighbor} with the proof in Appendix \ref{app:prop:compactness_neighbor}. 

\begin{enumerate}[
    label=\textbf{Assumption~(D):}~,
    ref=(D),
    leftmargin=0pt,
    labelindent=0pt,
    itemindent=0pt,
    labelwidth=0pt,
    labelsep=0pt,
    listparindent=0pt,
    align=left
    ]
    \item 
    \label{ass:d}
    With $\beta \in (0,1)$, for the nominal distribution $D_0$, there exists some  $\bar \delta \in (0, \min\{ \beta, 1- \beta\})$ such that
    \begin{equation}
    \label{eq:ulv}
        \lim_{\eta\rightarrow 0^+}\sup_{\{\BFtheta \in \Theta: \Vert \BFtheta \Vert = 1\} }\mathbb{P}\left(\inf_{\boldsymbol{x} \in \mathcal{B}(\boldsymbol{X}_{D_0}, \bar \delta)}\abs{f(\boldsymbol{x}, \BFtheta)} < \eta \right) < \frac{\beta- \bar \delta}{1 - \bar \delta}.
    \end{equation}
\end{enumerate}
The above assumption requires that  the probability that $f(\bullet, \BFtheta)$ vanishes within a $\bar \delta$-neighborhood is uniformly bounded above by $(\beta-\bar \delta)/(1 - \bar \delta)$ over all unit-norm parameters $\BFtheta$ as $\eta \to 0^+$. Since the left-hand side of \eqref{eq:ulv} is non-decreasing with respect to $\bar\delta$ and the right-hand side is strictly decreasing, it suffices to verify the condition for small $\bar \delta$. The following example provides sufficient conditions for Assumption \ref{ass:d}.

\begin{example}
    Let $f$ be the regression function and $D_0$ be the nominal distribution. Assumption \ref{ass:d} is satisfied if the following conditions hold:
    \begin{enumerate}[label = (\alph*)]
        \item there exists a constant $\omega>0$ such that 
        \[
        \sup_{\{\BFtheta \in \Theta: \Vert \BFtheta \Vert = 1\}} \sup_{\boldsymbol{x}, \boldsymbol{x}^\prime \in \mathcal{X}} |f(\boldsymbol{x}, \BFtheta) -f(\boldsymbol{x}^\prime, \BFtheta)| \leq \omega \|\boldsymbol{x}^\prime - \boldsymbol{x}\|;
        \]
        \item $
        \lim_{\delta\rightarrow 0^+}\sup_{\{\BFtheta \in \Theta: \| \BFtheta \| = 1\} }\mathbb{P}\left(\, \abs{f(\boldsymbol{X}_{D_0}, \BFtheta)} < \delta\,  \right) < \beta.$
    \end{enumerate}
    Condition (a) is satisfied by linear regression ($\omega = 1$), piecewise affine regression ($\omega = 2$) and $L$-layer neural network regression with $\rho$-Lipschitz continuous activation functions ($\omega = \rho^L $). Besides, condition (b) is a probabilistic requirement of the same form as \eqref{eq:thm: positivehomo}.
    
    Under these conditions, one can choose $\bar\delta>0$ such that
    \[
    \sup_{\{\BFtheta \in \Theta: \| \BFtheta \| = 1\} }\mathbb{P}\left(\, \abs{f(\boldsymbol{X}_{D_0}, \BFtheta)} < 2\omega\bar \delta\,  \right) < \frac{\beta- \bar \delta}{1-\bar \delta}.
    \]
    It follows that
    \[
    \begin{aligned}
    &  \lim_{\eta\rightarrow 0^+}\sup_{\{\BFtheta \in \Theta: \Vert \BFtheta \Vert = 1\} }\mathbb{P}\left( \inf_{\boldsymbol{x} \in \mathcal{B}(\boldsymbol{X}_{D_0}, \bar \delta)} \abs{f(\boldsymbol{x}, \BFtheta)} < \eta\right)  \\
    & \qquad \leq \lim_{\eta\rightarrow 0^+}\sup_{\{\BFtheta \in \Theta: \Vert \BFtheta \Vert = 1\} }\mathbb{P}\left(|f(\boldsymbol{X}_{D_0}, \BFtheta)| < \eta + \omega\bar \delta\right) < \frac{\beta- \bar \delta}{1-\bar \delta},
    \end{aligned}
    \]
    which verifies Assumption \ref{ass:d} with $\bar\delta$.
\end{example}

\begin{proposition}

\label{prop:compactness_neighbor}
    Given $\beta \neq 1$, suppose that Assumption \ref{ass:b1} and Assumption \ref{ass:d} hold under the nominal distribution $D_0$ for some  $\bar \delta \in (0, \min\{\beta, 1- \beta\})$. Then there exists a compact subset $C \subseteq \Theta$ such that $\hat{\mathcal{S}}_\alpha^\beta(D,f) \subseteq C$ for all {\rm $D \in  \mathcal{B}_{\text{Prok}}(D_0,\bar \delta)$}.
\end{proposition}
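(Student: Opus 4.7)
The plan is to argue by contradiction, adapting the positive-homogeneity strategy of Theorem~\ref{thm: positivehomo} to the Prokhorov ball. Suppose otherwise: there exist $\{D_n\}\subseteq\mathcal{B}_{\text{Prok}}(D_0,\bar\delta)$ and $\BFtheta^{(n)}\in\hat{\mathcal{S}}_\alpha^\beta(D_n,f)$ with $\|\BFtheta^{(n)}\|\to\infty$. Let $\boldsymbol{v}^{(n)} = \BFtheta^{(n)}/\|\BFtheta^{(n)}\|$ denote the corresponding unit-norm directions. I would show that $\mbox{In-CVaR}_\alpha^\beta$ at $\BFtheta^{(n)}$ under $D_n$ diverges, while the optimal value at $D_n$ stays uniformly bounded, thereby contradicting the optimality of $\BFtheta^{(n)}$.

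For the divergence, I invoke Assumption~\ref{ass:d}. Since the probability $\mathbb{P}_{D_0}(\inf_{\boldsymbol{x}'\in\mathcal{B}(\boldsymbol{X}_{D_0},\bar\delta)}|f(\boldsymbol{x}',\boldsymbol{v})|<\eta)$ is non-decreasing in $\eta$ and its $\eta\to 0^+$ limit is strictly below $(\beta-\bar\delta)/(1-\bar\delta)$, one can select $\bar\eta>0$ and a uniform slack $\nu>0$ so that
\[
\mathbb{P}_{D_0}\Bigl(\inf_{\boldsymbol{x}'\in\mathcal{B}(\boldsymbol{X}_{D_0},\bar\delta)}|f(\boldsymbol{x}',\boldsymbol{v})|\geq\bar\eta\Bigr)\geq \frac{1-\beta}{1-\bar\delta}+\nu
\]
uniformly in unit-norm $\boldsymbol{v}$. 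Pick $T$ with $\mathbb{P}_{D_0}(|Y_{D_0}|\leq T)\geq 1-\tau$ for small $\tau>0$, and define the good set $C_{\boldsymbol{v}}\triangleq\{(\boldsymbol{x},y):\inf_{\boldsymbol{x}'\in\mathcal{B}(\boldsymbol{x},\bar\delta)}|f(\boldsymbol{x}',\boldsymbol{v})|\geq\bar\eta,\ |y|\leq T\}$. A triangle-inequality check shows $\mathcal{B}(C_{\boldsymbol{v}},\bar\delta)\subseteq\{(\boldsymbol{x},y):|f(\boldsymbol{x},\boldsymbol{v})|\geq\bar\eta,\ |y|\leq T+\bar\delta\}$, so the Prokhorov inequality yields
\[
\mathbb{P}_{D_n}\Bigl(|f(\boldsymbol{X}_{D_n},\boldsymbol{v}^{(n)})|\geq\bar\eta,\ |Y_{D_n}|\leq T+\bar\delta\Bigr) \geq \mathbb{P}_{D_0}(C_{\boldsymbol{v}^{(n)}})-\bar\delta \geq q^\star,
\]
where $q^\star\triangleq(1-\beta)/(1-\bar\delta)+\nu-\tau-\bar\delta$. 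On this set, positive homogeneity (Assumption~\ref{ass:b1}(b)) gives $|f(\boldsymbol{X}_{D_n},\BFtheta^{(n)})-Y_{D_n}|\geq\|\BFtheta^{(n)}\|\bar\eta-T-\bar\delta\to\infty$, so the loss diverges on a $D_n$-set of mass at least $q^\star$. Provided $q^\star>1-\beta$, a positive-length portion of $\mbox{VaR}_\gamma$ for $\gamma\in[\alpha,\beta]$ diverges, forcing $\mbox{In-CVaR}_\alpha^\beta$ at $\BFtheta^{(n)}$ to diverge along the sequence.

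For the uniform upper bound, fix any $\BFtheta^\star\in\hat{\mathcal{S}}_\alpha^\beta(D_0,f)$, which is nonempty and bounded by Assumption~\ref{ass:a4}. Applying Assumption~\ref{ass:a6} to a compact neighborhood of $\BFtheta^\star$ produces an integrable envelope $\psi$ dominating the loss at $\BFtheta^\star$; combining continuity (Assumption~\ref{ass:a5}) with the Prokhorov closeness of $D_n$ to $D_0$ then yields an $n$-uniform upper bound on $\mbox{In-CVaR}_\alpha^\beta$ at $\BFtheta^\star$ under $D_n$, completing the contradiction. The hard part will be verifying $q^\star>1-\beta$: the slack $\nu$ from Assumption~\ref{ass:d} must absorb both the truncation cost $\tau$ and the Prokhorov transfer loss $\bar\delta$, so the calibration of $\bar\eta$ via the limit-infimum structure of the assumption and the choice of $T$ must be made carefully. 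A secondary subtlety is securing the uniform upper bound on $\mbox{In-CVaR}_\alpha^\beta$ at $\BFtheta^\star$ across the Prokhorov neighborhood, since In-CVaR is not automatically Prokhorov-continuous for unbounded losses; explicit tail control via $\psi$ and the Rockafellar–Uryasev representation of $\mbox{CVaR}_\alpha$ should be the means to close this gap.
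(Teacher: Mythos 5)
Your overall architecture matches the paper's: show that the In-CVaR objective diverges uniformly over the Prokhorov ball as $\|\BFtheta\|\to\infty$ (via Assumptions \ref{ass:b1} and \ref{ass:d}), show that the optimal value stays uniformly bounded over the ball, and invoke closedness of $\Theta$ (Assumption \ref{ass:a3}). The gap is in the mass-transfer step, and it is exactly the point you flag as ``the hard part'': the raw Prokhorov inequality only yields the additive bound $\mathbb{P}_{D}(\cdot)\ge\mathbb{P}_{D_0}(C_{\boldsymbol{v}})-\bar\delta$, so your $q^\star=(1-\beta)/(1-\bar\delta)+\nu-\tau-\bar\delta$, and you need $q^\star>1-\beta$. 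But
\[
\frac{1-\beta}{1-\bar\delta}-\bar\delta-(1-\beta)=\bar\delta\left(\frac{1-\beta}{1-\bar\delta}-1\right)=-\,\frac{\bar\delta(\beta-\bar\delta)}{1-\bar\delta}<0
\]
whenever $\bar\delta<\beta$, which always holds here since $\bar\delta<\min\{\beta,1-\beta\}$. Hence you need $\nu-\tau>\bar\delta(\beta-\bar\delta)/(1-\bar\delta)$, a fixed positive quantity, while Assumption \ref{ass:d} only guarantees that \emph{some} $\nu>0$ exists; it can be arbitrarily small. The additive route therefore cannot be closed. The paper's proof avoids this by invoking Strassen's theorem to obtain a coupling with $\mathbb{P}(\|\boldsymbol{Z}_D-\boldsymbol{Z}_{D_0}\|\le\bar\delta)\ge1-\bar\delta$ and bounding the probability of the good event under $D$ below by $(1-\bar\delta)$ times the conditional probability of the shrunk event under $D_0$; the multiplicative factor $(1-\bar\delta)$ exactly cancels the $1/(1-\bar\delta)$ built into the threshold $(\beta-\bar\delta)/(1-\bar\delta)$ of Assumption \ref{ass:d}, so the surviving margin only has to absorb the truncation cost $\tau$, which is chosen after that margin is known.

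A second, smaller issue concerns your uniform upper bound on the optimal value over the Prokhorov ball. An integrable envelope $\psi$ at a fixed $\BFtheta^\star$ does not suffice: integrability of $\psi(\boldsymbol{X}_D,Y_D)$ is a per-distribution statement, and Prokhorov closeness of $D$ to $D_0$ gives no control of tails, so the ``moved'' mass of $D$ can sit where the loss at $\BFtheta^\star$ is arbitrarily large. The mechanism that works is the trimming itself: since $\bar\delta<1-\beta$, one bounds $\mbox{In-CVaR}_\alpha^\beta\le\mbox{VaR}_\beta$ under $D$ by $\mbox{VaR}_{\beta/(1-\bar\delta)}$ of the worst-case loss over a $\bar\delta$-ball under $D_0$, which is finite after restricting to a bounded set of $D_0$-probability at least $\beta/(1-\bar\delta)$. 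This is how the paper obtains the uniform bound, and it is precisely where $\beta\neq1$ enters.
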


The following theorem shows that the In-CVaR based estimator is stable under perturbation provided that $\beta \neq 1$, with the proof provided in Appendix \ref{app:thm_stability}. 

\begin{theorem}[Stability of In-CVaR based estimators]

\label{thm:stability}
    Given $\beta \neq 1$, suppose that Assumptions \ref{ass:b1} and \ref{ass:d} hold under the nominal distribution $D_0$. In addition, for the compact subset $C$ obtained in Proposition \ref{prop:compactness_neighbor} and  any $\bar \BFtheta \in C$, assume that there exist $\gamma>0 $ and a continuous function $\phi_{\bar \BFtheta} : \mathcal{X} \times \mathcal{Y} \rightarrow \mathbb{R}_+$ such that $\phi_{\bar \BFtheta}(\boldsymbol{X}_{D_0},Y_{D_0})$ is integrable and, for almost every $(\boldsymbol{x},y)\in \mathcal{X}\times\mathcal{Y}$ and all $ \BFtheta \in \mathcal{B}(\bar\BFtheta, \gamma) \cap C$, 
    \[
    \abs{\mathcal{L}\left(\,\abs{f(\boldsymbol{x}, \BFtheta) - y}\right) - \mathcal{L}\left(\,\abs{f(\boldsymbol{x}, \bar\BFtheta) - y}\right)} \leq \phi_{\bar \BFtheta}(\boldsymbol{x},y)\Vert \BFtheta - \bar\BFtheta \Vert.
    \]
    Then, we obtain that as {\rm $\displaystyle d_{\text{Prok}} (D, D_0) \rightarrow 0$}, 
    {
    \rm
    \[
    \displaystyle  \mbox{In-CVaR}_\alpha^\beta \left(\mathcal{L}\left(\,\abs{f(\boldsymbol{X}_{D}, \BFtheta) - Y_{D}}\right)\right) \to \mbox{In-CVaR}_\alpha^\beta \left(\mathcal{L}\left(\,\abs{f(\boldsymbol{X}_{D_0}, \BFtheta) - Y_{D_0}}\right)\right)
    \]
    }
    uniformly on $C$ and $\mathbb{D}\big(\hat{\mathcal{S}}_\alpha^\beta(D,f), \hat{\mathcal{S}}_\alpha^\beta(D_0,f)\big) \rightarrow 0.$
\end{theorem}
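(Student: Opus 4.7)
The plan is to establish uniform convergence of the In-CVaR objective on the compact set $C$ delivered by Proposition~\ref{prop:compactness_neighbor}, and then deduce deviation convergence of the optimizer sets by a standard parametric-optimization argument. Writing $Z_D^{\BFtheta} \triangleq \mathcal{L}(\,\abs{f(\boldsymbol{X}_D, \BFtheta) - Y_D})$, I would decompose
\[
\sup_{\BFtheta \in C} \left| \mbox{In-CVaR}_\alpha^\beta(Z_D^{\BFtheta}) - \mbox{In-CVaR}_\alpha^\beta(Z_{D_0}^{\BFtheta}) \right|
\]
and treat (i) pointwise continuity of the objective in $D$ for each fixed $\BFtheta$, then (ii) equicontinuity in $\BFtheta$ uniformly over a Prokhorov neighborhood of $D_0$; a standard compactness-plus-$\varepsilon/3$ argument upgrades these to uniform convergence on $C$.

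For (i), since $d_{\text{Prok}}$ metrizes weak convergence on $\mathcal{Z}$ and both $\mathcal{L}$ and $f(\bullet, \BFtheta)$ are continuous by Assumption~\ref{ass:a5}, the continuous mapping theorem yields that $Z_D^{\BFtheta}$ converges in distribution to $Z_{D_0}^{\BFtheta}$ as $d_{\text{Prok}}(D, D_0) \to 0$. Writing $\mbox{In-CVaR}_\alpha^\beta(W) = (\beta - \alpha)^{-1} \int_\alpha^\beta F_W^{-1}(u)\,du$, the hypothesis $\beta < 1$ is crucial: weak convergence forces the quantile functions to converge almost everywhere on $(0, 1)$, and on the compact subinterval $[\alpha, \beta] \subset [\alpha, 1)$ they are eventually dominated by the finite value $F_{Z_{D_0}^{\BFtheta}}^{-1}(u')$ at any continuity point $u' \in (\beta, 1)$. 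Dominated convergence then yields $\mbox{In-CVaR}_\alpha^\beta(Z_D^{\BFtheta}) \to \mbox{In-CVaR}_\alpha^\beta(Z_{D_0}^{\BFtheta})$. This is the precise mechanism by which trimming the upper tail restores weak continuity of the risk measure, in the spirit of \cite{cont2010robustness}.

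For (ii), I would apply the Kantorovich-type estimate
\[
\left|\mbox{In-CVaR}_\alpha^\beta(X) - \mbox{In-CVaR}_\alpha^\beta(Y)\right| \leq \frac{1}{\beta-\alpha}\,\mathbb{E}\abs{X-Y},
\]
which follows from the quantile representation together with the Wasserstein-1 rearrangement inequality. Combined with the hypothesized Lipschitz bound, this gives, for $\BFtheta \in \mathcal{B}(\bar\BFtheta, \gamma) \cap C$,
\[
\left|\mbox{In-CVaR}_\alpha^\beta(Z_D^{\BFtheta}) - \mbox{In-CVaR}_\alpha^\beta(Z_D^{\bar\BFtheta})\right| \leq \frac{\Vert\BFtheta-\bar\BFtheta\Vert}{\beta-\alpha}\,\mathbb{E}_D[\phi_{\bar\BFtheta}(\boldsymbol{X}_D, Y_D)].
\]
The hard part is bounding $\mathbb{E}_D[\phi_{\bar\BFtheta}]$ uniformly over some $\mathcal{B}_{\text{Prok}}(D_0, \delta_{\bar\BFtheta})$: the truncated integrand $\phi_{\bar\BFtheta} \wedge M$ is bounded and continuous, so the portmanteau theorem delivers $\mathbb{E}_D[\phi_{\bar\BFtheta} \wedge M] \to \mathbb{E}_{D_0}[\phi_{\bar\BFtheta} \wedge M]$, while the tail residual $\mathbb{E}_D[(\phi_{\bar\BFtheta} - M)_+]$ is made small by taking $M$ large (invoking integrability of $\phi_{\bar\BFtheta}(\boldsymbol{X}_{D_0}, Y_{D_0})$ under $D_0$) and shrinking $\delta_{\bar\BFtheta}$ so that the Prokhorov ball cannot redistribute too much mass to the super-level set $\{\phi_{\bar\BFtheta} > M\}$. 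Covering $C$ by finitely many such balls and combining with (i) at the centers then delivers uniform convergence on $C$.

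With uniform convergence in hand, the remaining step is classical: Proposition~\ref{prop:compactness_neighbor} confines $\hat{\mathcal{S}}_\alpha^\beta(D,f) \subseteq C$ for $D$ in a small Prokhorov ball, and Assumption~\ref{ass:a4} already places $\hat{\mathcal{S}}_\alpha^\beta(D_0,f)$ in a compact subset of $C$; uniform convergence plus continuity of the limiting objective then forces every accumulation point of any sequence $\BFtheta_n \in \hat{\mathcal{S}}_\alpha^\beta(D_n,f)$ with $d_{\text{Prok}}(D_n,D_0)\to 0$ to minimize under $D_0$, which yields $\mathbb{D}(\hat{\mathcal{S}}_\alpha^\beta(D,f), \hat{\mathcal{S}}_\alpha^\beta(D_0,f)) \to 0$. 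The main obstacle is step (ii): the tempting shortcut of applying subadditivity of In-CVaR to the Lipschitz dominator $\phi_{\bar\BFtheta}$ fails, since In-CVaR is not subadditive for general $\alpha < \beta$, so the truncation-plus-Prokhorov mass control above is the necessary detour.
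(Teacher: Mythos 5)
Your overall architecture (pointwise convergence in $D$ at fixed $\BFtheta$, local equicontinuity in $\BFtheta$ uniformly in $D$, finite subcover, then the standard optimizer-set argument) mirrors the paper's, and your step (i) is essentially the paper's Lemma~\ref{lem:continuity_incvar}. The problem is step (ii), where there is a genuine gap. Your equicontinuity estimate reduces to showing that $\mathbb{E}_D[\phi_{\bar \BFtheta}(\boldsymbol{X}_D,Y_D)]$ is bounded uniformly over $D$ in some Prokhorov ball $\mathcal{B}_{\text{Prok}}(D_0,\delta_{\bar\BFtheta})$, and your truncation argument cannot deliver this: the Prokhorov metric controls probabilities of sets, not expectations of unbounded functions. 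Concretely, by Strassen's theorem every mixture $D=(1-\delta)D_0+\delta G$ lies in $\mathcal{B}_{\text{Prok}}(D_0,\delta)$, and taking $G$ to be a point mass at a location where $\phi_{\bar\BFtheta}$ exceeds $K/\delta$ gives $\mathbb{E}_D[(\phi_{\bar\BFtheta}-M)_+]\geq K - \delta M$, which is arbitrarily large no matter how small $\delta$ is or how large $M$ is. Shrinking the Prokhorov radius limits how much \emph{mass} can land on the super-level set $\{\phi_{\bar\BFtheta}>M\}$, but not how much \emph{expectation} that mass contributes, since $\phi_{\bar\BFtheta}$ is unbounded there. So $\sup_{D\in\mathcal{B}_{\text{Prok}}(D_0,\delta)}\mathbb{E}_D[\phi_{\bar\BFtheta}]=\infty$ in general and the Kantorovich-type bound gives you nothing uniform.

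The paper's proof avoids this precisely by never passing through $\mathbb{E}_D[\phi_{\bar\BFtheta}]$. Instead of subadditivity or a Wasserstein bound, it uses \emph{monotonicity} of In-CVaR: since $Z_D^{\BFtheta}\leq Z_D^{\bar\BFtheta}+\gamma_k\,\phi_{\bar\BFtheta}(\boldsymbol{X}_D,Y_D)$ a.s.\ for $\BFtheta\in C\cap\mathcal{B}(\bar\BFtheta,\gamma_k)$, the local oscillation is bounded by $\mbox{In-CVaR}_\alpha^\beta\big(Z_D^{\bar\BFtheta}+\gamma_k\phi_{\bar\BFtheta}\big)-\mbox{In-CVaR}_\alpha^\beta\big(Z_D^{\bar\BFtheta}\big)$, and this \emph{entire expression} is then treated as a weakly continuous functional of $D$ (continuous mapping theorem applied to the continuous integrand $\mathcal{L}(|f(\cdot,\bar\BFtheta)-\cdot|)+\gamma_k\phi_{\bar\BFtheta}$, followed by Lemma~\ref{lem:continuity_incvar}, which is where $\beta<1$ enters). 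Its limit as $d_{\text{Prok}}(D,D_0)\to 0$ is the corresponding difference under $D_0$, and only at that final stage is the integrability of $\phi_{\bar\BFtheta}(\boldsymbol{X}_{D_0},Y_{D_0})$ — under $D_0$ alone — used to send $\gamma_k\to 0$. This is the detour you need: route the Lipschitz dominator through the trimmed risk measure, whose insensitivity to upper tails is exactly what $\beta\neq 1$ buys, rather than through a raw expectation under the perturbed distribution.
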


Building on Theorems \ref{thm:consistency} and \ref{thm:stability}, we obtain the following corollary, which shows that the In-CVaR based estimator is qualitatively robust at $D_0$ with $\beta \neq 1$. 

\begin{corollary}[Qualitative robustness of In-CVaR based estimators]
    \label{thm:sufficient_qrao}
    Under the same conditions presented in Theorem \ref{thm:stability}, the In-CVaR based estimator $ \hat{\mathcal{S}}_\alpha^\beta(\bullet, f)$ is qualitatively robust at $D_0$ with respect to the Prokhorov metric.
\end{corollary}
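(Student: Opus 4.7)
The plan is to combine Theorem~\ref{thm:consistency} and Theorem~\ref{thm:stability} via the triangle-type decomposition
\[
\mathbb{D}\big(\hat{\mathcal{S}}_\alpha^\beta(D^{(n)},f), \hat{\mathcal{S}}_\alpha^\beta(D_0,f)\big) \leq \mathbb{D}\big(\hat{\mathcal{S}}_\alpha^\beta(D^{(n)},f), \hat{\mathcal{S}}_\alpha^\beta(D,f)\big) + \mathbb{D}\big(\hat{\mathcal{S}}_\alpha^\beta(D,f), \hat{\mathcal{S}}_\alpha^\beta(D_0,f)\big)
\]
already pictured in Figure~\ref{fig:sufficient}; this follows from $\mbox{dist}(a,C)\leq \mbox{dist}(a,B)+\mathbb{D}(B,C)$ for any point $a$ and any sets $B,C$. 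The glue that lets both theorems apply against a common compact set is Proposition~\ref{prop:compactness_neighbor}, which pins the estimator sets at \emph{every} nearby distribution into a single compact $C\subseteq\Theta$.

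Given $\varepsilon>0$, first use Theorem~\ref{thm:stability} to pick $\delta_1\in(0,\bar\delta/2]$, with $\bar\delta$ as in Proposition~\ref{prop:compactness_neighbor}, so that $d_{\text{Prok}}(D,D_0)\leq\delta_1$ implies $\mathbb{D}\big(\hat{\mathcal{S}}_\alpha^\beta(D,f),\hat{\mathcal{S}}_\alpha^\beta(D_0,f)\big)\leq\varepsilon/2$. Set $\delta:=\delta_1$ and fix any perturbed $D$ with $d_{\text{Prok}}(D,D_0)\leq\delta$; Proposition~\ref{prop:compactness_neighbor} then furnishes a fixed compact $C\subseteq\Theta$ with $\hat{\mathcal{S}}_\alpha^\beta(D,f)\subseteq C$. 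By the i.i.d.~sampling in Assumption~\ref{ass:a7} together with Varadarajan's theorem on Polish spaces, $d_{\text{Prok}}(D^{(n)},D)\to 0$ w.p.1; hence on a probability-one event and for all $n$ sufficiently large,
\[
d_{\text{Prok}}(D^{(n)},D_0)\leq d_{\text{Prok}}(D^{(n)},D)+d_{\text{Prok}}(D,D_0)\leq\bar\delta,
\]
so a second call to Proposition~\ref{prop:compactness_neighbor} also places $\hat{\mathcal{S}}_\alpha^\beta(D^{(n)},f)\subseteq C$.

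With the containment conditions (i)--(ii) of Theorem~\ref{thm:consistency} met at the perturbed $D$, its conclusion gives $\mathbb{D}\big(\hat{\mathcal{S}}_\alpha^\beta(D^{(n)},f),\hat{\mathcal{S}}_\alpha^\beta(D,f)\big)\to 0$ w.p.1, so this term is at most $\varepsilon/2$ for large $n$. Plugging the two estimates into the triangle decomposition yields $\mathbb{D}\big(\hat{\mathcal{S}}_\alpha^\beta(D^{(n)},f),\hat{\mathcal{S}}_\alpha^\beta(D_0,f)\big)\leq\varepsilon$ w.p.1 for all sufficiently large $n$, which is exactly the bound required by Definition~\ref{def_qualitative_robust}.

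The step I expect to be the main obstacle is verifying the \emph{full} hypotheses of Theorem~\ref{thm:consistency} at a perturbed $D$ --- in particular, the non-emptiness of $\hat{\mathcal{S}}_\alpha^\beta(D,f)$ and $\hat{\mathcal{S}}_\alpha^\beta(D^{(n)},f)$ --- since Proposition~\ref{prop:compactness_neighbor} records only a containment property. Non-emptiness should follow from the continuity of the In-CVaR objective on the compact $C$ under Assumptions~\ref{ass:a5}--\ref{ass:a6}, combined with the fact, built into the proof of Proposition~\ref{prop:compactness_neighbor}, that any minimizing sequence in $\Theta$ has its tail forced into $C$, so the infimum over $\Theta$ is attained inside $C$. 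Once this uniform attainment is confirmed across $\mathcal{B}_{\text{Prok}}(D_0,\bar\delta)$, the rest of the argument runs exactly as sketched.
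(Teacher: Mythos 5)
Your proposal is correct and follows essentially the same route as the paper's proof: the same triangle-inequality decomposition of $\mathbb{D}$, stability (Theorem \ref{thm:stability}) handling the $D$-to-$D_0$ term, and consistency (Theorem \ref{thm:consistency}) combined with Proposition \ref{prop:compactness_neighbor} handling the empirical term, with your appeal to Varadarajan's theorem playing the same role as the paper's Glivenko--Cantelli plus Theorem 2.14 of \cite{huber2009robust}. Your explicit attention to the non-emptiness hypotheses of Theorem \ref{thm:consistency}, which the paper leaves implicit, is a reasonable refinement rather than a departure.
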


\begin{proof}{Proof.}

According to the triangle inequality \cite[Example 6.14, Chapter 2]{barnsley2014fractals}, 
\[
\label{eq:suffcienty_inequality}
\begin{aligned}
 & \mathbb{D}\big( \hat{\mathcal{S}}_\alpha^\beta(D^{(n)} , f), \hat{\mathcal{S}}_\alpha^\beta(D_0 , f)\big) \leq \mathbb{D}\big(\hat{\mathcal{S}}_\alpha^\beta(D^{(n)}, f), \hat{\mathcal{S}}_\alpha^\beta(D, f) \big) + \mathbb{D}\big( \hat{\mathcal{S}}_\alpha^\beta(D , f), \hat{\mathcal{S}}_\alpha^\beta(D_0 , f)\big).
\end{aligned}
\]
By Theorem \ref{thm:stability}, for any $\varepsilon > 0$, there exists $\delta \in (0, \bar \delta/{2})$ such that $\mathbb{D}\big( \hat{\mathcal{S}}_\alpha^\beta(D , f), \hat{\mathcal{S}}_\alpha^\beta(D_0 , f)\big) \leq \varepsilon/2$ for all $D \in \mathcal{B}_{\text{Prok}}(D_0, \delta)$. By the Glivenko–Cantelli theorem and Theorem 2.14 of \cite{huber2009robust},  we obtain that $D^{(n)} \in \mathcal{B}_{\text{Prok}}(D_0, \bar\delta)$ w.p.1 for all sufficiently large $n$.  Then, by Theorem \ref{thm:consistency} and Proposition \ref{prop:compactness_neighbor},  we obtain  $\mathbb{D}\big(\hat{\mathcal{S}}_\alpha^\beta(D^{(n)}, f), \hat{\mathcal{S}}_\alpha^\beta(D, f) \big) \to 0$ w.p.1 as $n \to \infty$. Therefore, 
\[
\mathbb{D}\big( \hat{\mathcal{S}}_\alpha^\beta(D^{(n)} , f), \hat{\mathcal{S}}_\alpha^\beta(D_0 , f)\big) \leq \varepsilon \, \, \text{w.p.1 as} \, \, n \rightarrow \infty,
\]
which completes the proof.\Halmos\end{proof}

\subsection{Necessary Conditions for Qualitative Robustness}
\label{sec:necessary_qrao}

In this subsection, we show that the In-CVaR based estimator is qualitatively robust only if $\beta \neq 1$. Under continuity of the regression function $f$ and closedness of the parameter space $\Theta$, Corollary \ref{cor:ep>beta} implies that, for any $\delta>0$, $\sup_{D \in \mathcal{B}(D_0, \delta)} \,  \inf_{\hat{\BFtheta} \in \hat{\mathcal{S}}_\alpha^1(D,f)} \| \hat{\BFtheta} \| = \infty$.
Theorem \ref{thm: qrao_necessary} shows that such an unbounded property contradicts qualitative robustness, and furthermore we must have $\beta \neq 1$ . 
\begin{theorem}
\label{thm: qrao_necessary}
    Suppose there exists  $\tilde{\delta} > 0$ such that $ \hat{\mathcal{S}}_\alpha^\beta(\bullet, f)$ is consistent at every {\rm $D \in \mathcal{B}_{\text{Prok}}(D_0, \tilde{\delta})$}. If $ \hat{\mathcal{S}}_\alpha^\beta(\bullet, f)$ is qualitatively robust at $D_0$ with respect to the Prokhorov metric, then there exists $\delta > 0$ such that 
    {\rm
    \begin{equation}
    \label{eq:necessary}
    \sup_{D \in \mathcal{B}_{\text{Prok}}(D_0, \delta)} \inf_{\hat{\BFtheta} \in \hat{\mathcal{S}}_\alpha^\beta(D, f)} \| \hat{\BFtheta}
    \| < \infty.
    \end{equation}
    }
    Furthermore, if either (a) Assumptions \ref{ass:c1} and \ref{ass:b2} or (b) Assumption \ref{ass:b3} holds, then $\beta \neq 1$.
\end{theorem}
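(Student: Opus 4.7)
The plan is to establish the two claims sequentially. For the bound \eqref{eq:necessary}, I would first use Assumption \ref{ass:a4} to fix $R_0>0$ such that $\hat{\mathcal{S}}_\alpha^\beta(D_0,f)\subseteq\mathcal{B}(\mathbf{0},R_0)$. Applying qualitative robustness at $D_0$ with $\varepsilon=1$ yields some $\delta_1>0$ for which
\[
\mathbb{D}\!\left(\hat{\mathcal{S}}_\alpha^\beta(D^{(n)},f),\,\hat{\mathcal{S}}_\alpha^\beta(D_0,f)\right)\le 1\ \text{w.p.1 as}\ n\to\infty
\]
whenever $D\in\mathcal{B}_{\text{Prok}}(D_0,\delta_1)$. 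Setting $\delta=\min\{\delta_1,\tilde{\delta}\}$ and fixing any such $D$, the consistency hypothesis at $D$ additionally delivers $\mathbb{D}(\hat{\mathcal{S}}_\alpha^\beta(D^{(n)},f),\hat{\mathcal{S}}_\alpha^\beta(D,f))\to 0$ w.p.1. Both convergence statements therefore hold on a common probability-one event; fix any sample path in it.

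On that path, for sufficiently large $n$ pick any $\hat{\BFtheta}^{(n)}\in\hat{\mathcal{S}}_\alpha^\beta(D^{(n)},f)$. The first inequality, together with a vanishing slack to convert the infimum in $\mathbb{D}$ to an attained distance, gives $\|\hat{\BFtheta}^{(n)}\|\le R_0+2$. The second inequality then produces some $\hat{\BFtheta}_D\in\hat{\mathcal{S}}_\alpha^\beta(D,f)$ with $\|\hat{\BFtheta}_D-\hat{\BFtheta}^{(n)}\|\le 1$, so $\|\hat{\BFtheta}_D\|\le R_0+3$. Hence $\inf_{\hat{\BFtheta}\in\hat{\mathcal{S}}_\alpha^\beta(D,f)}\|\hat{\BFtheta}\|\le R_0+3$, and since the bound is independent of $D$, taking the supremum over $D\in\mathcal{B}_{\text{Prok}}(D_0,\delta)$ establishes \eqref{eq:necessary}.

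For the second claim I would argue by contradiction: suppose $\beta=1$. By Corollary \ref{cor:ep>beta} applied with any $\varepsilon\in(0,1]$, there is a finite set $\{\bar{\boldsymbol{x}}_i\}_{i=1}^m\subseteq\widehat{\mathcal{X}}$ such that $\sup_{D\in\mathcal{B}(D_0,\varepsilon)}\inf_{\hat{\BFtheta}\in\hat{\mathcal{S}}_\alpha^\beta(D,f)}\max_{i\in[m]}|f(\bar{\boldsymbol{x}}_i,\hat{\BFtheta})|=\infty$. The contamination ball embeds into the Prokhorov ball via the elementary bound $d_{\text{Prok}}((1-\varepsilon)D_0+\varepsilon G,D_0)\le\varepsilon$ (a direct check against Definition \ref{def:Prok} since $(1-\varepsilon)D_0(A)+\varepsilon G(A)\le D_0(A)+\varepsilon$ and symmetrically), so $\mathcal{B}(D_0,\varepsilon)\subseteq\mathcal{B}_{\text{Prok}}(D_0,\varepsilon)$. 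Choose $\varepsilon\le\delta$ with $\delta$ from the first part. Assumptions \ref{ass:a3} and \ref{ass:a5} make $\{\BFtheta\in\Theta:\|\BFtheta\|\le R_0+3\}$ compact and $f(\bar{\boldsymbol{x}}_i,\bullet)$ continuous, hence uniformly bounded on this compact set. Since the first part supplies, for each $D\in\mathcal{B}_{\text{Prok}}(D_0,\delta)$, some $\hat{\BFtheta}_D\in\hat{\mathcal{S}}_\alpha^\beta(D,f)$ with $\|\hat{\BFtheta}_D\|\le R_0+3$, the quantity $\max_{i\in[m]}|f(\bar{\boldsymbol{x}}_i,\hat{\BFtheta}_D)|$ is uniformly bounded over $D\in\mathcal{B}_{\text{Prok}}(D_0,\delta)\supseteq\mathcal{B}(D_0,\varepsilon)$, contradicting the divergence from Corollary \ref{cor:ep>beta}. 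Therefore $\beta\ne 1$.

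The main obstacle is the careful handling of the asymmetric deviation $\mathbb{D}$: because $\mathbb{D}(A,B)\to 0$ controls only the one-sided Hausdorff distance, the argument must route through an empirical estimator $\hat{\BFtheta}^{(n)}$ that is simultaneously close to $\hat{\mathcal{S}}_\alpha^\beta(D_0,f)$ (via qualitative robustness) and to some element of $\hat{\mathcal{S}}_\alpha^\beta(D,f)$ (via consistency at $D$), with both statements holding on the same almost-sure event, so that a deterministic realization supplies a bounded estimator in $\hat{\mathcal{S}}_\alpha^\beta(D,f)$. The remaining translation between a norm bound on $\hat{\BFtheta}$ and a value bound on $f(\bar{\boldsymbol{x}}_i,\hat{\BFtheta})$, together with the contamination-to-Prokhorov inclusion, is routine.
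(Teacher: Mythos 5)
Your proof is correct and follows essentially the same route as the paper: the norm bound \eqref{eq:necessary} is obtained by the triangle inequality routed through the empirical estimator $\hat{\BFtheta}^{(n)}$ (combining qualitative robustness toward $\hat{\mathcal{S}}_\alpha^\beta(D_0,f)$ with consistency toward $\hat{\mathcal{S}}_\alpha^\beta(D,f)$ on a common almost-sure event), and the claim $\beta\neq 1$ follows from Corollary \ref{cor:ep>beta} together with the inclusion $\mathcal{B}(D_0,\varepsilon)\subseteq\mathcal{B}_{\text{Prok}}(D_0,\varepsilon)$. The only cosmetic differences are that you verify that inclusion directly from the definition rather than citing Strassen's theorem, and you derive the final contradiction at the level of the fitted values $\max_{i}|f(\bar{\boldsymbol{x}}_i,\hat{\BFtheta})|$ rather than first converting the value divergence into norm divergence via compactness and continuity, which is logically equivalent.
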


\begin{proof}{Proof.}
For any  $D \in \mathcal{B}_{\text{Prok}}(D_0, \tilde \delta)$, $\hat{\BFtheta}_\alpha^\beta(D_0, f) \in \hat{\mathcal{S}}_\alpha^\beta(D_0, f)$ and $\hat{\BFtheta}_\alpha^\beta(D^{(n)}, f) \in \hat{\mathcal{S}}_\alpha^\beta(D^{(n)}, f)$, by the triangle inequality,
\[
\begin{aligned}
    &  \inf_{\hat{\BFtheta} \in \hat{\mathcal{S}}_\alpha^\beta(D, f)} \| \hat{\BFtheta} \| \leq   \limsup_{n \to \infty}\inf_{\hat{\BFtheta} \in \hat{\mathcal{S}}_\alpha^\beta(D, f)}  \| \hat{\BFtheta} - \hat{\BFtheta}_\alpha^\beta(D^{(n)}, f)\| \\
    & \qquad  + \limsup_{n \to \infty}  \| \hat{\BFtheta}_\alpha^\beta(D^{(n)}, f)- \hat{\BFtheta}_\alpha^\beta(D_0, f)\| +  \| \hat{\BFtheta}_\alpha^\beta(D_0, f)\|,
\end{aligned}
\]
where the first term equals zero w.p.1 by consistency and the third term is finite.
For the second term, we have
\[
\begin{aligned}
    & \limsup_{n \to \infty} \| \hat{\BFtheta}_\alpha^\beta(D^{(n)}, f)- \hat{\BFtheta}_\alpha^\beta(D_0, f)\|\\
    & \qquad \leq  \limsup_{n \to \infty} \mathbb{D}\left( \hat{\mathcal{S}}_\alpha^\beta(D^{(n)}, f),  \hat{\mathcal{S}}_\alpha^\beta(D_0, f)\right) + \mbox{diam}\big(\hat{\mathcal{S}}_\alpha^\beta(D_0, f)\big).
\end{aligned}
\]
By Assumption \ref{ass:a4}, $\mbox{diam}\big(\hat{\mathcal{S}}_\alpha^\beta(D_0, f)\big)$ is finite. By qualitative robustness of $ \hat{\mathcal{S}}_\alpha^\beta(\bullet, f)$  at $D_0$, for any $\varepsilon>0$, there exists $\delta \in (0, \tilde \delta)$ such that for any $D \in \mathcal{B}_{\text{Prok}}(D_0, \delta)$,
\[
\limsup_{n \to \infty} \mathbb{D}\left( \hat{\mathcal{S}}_\alpha^\beta(D^{(n)}, f),  \hat{\mathcal{S}}_\alpha^\beta(D_0, f)\right) \leq \varepsilon \quad \text{w.p.1.}
\]
Therefore, the boundedness condition in \eqref{eq:necessary} follows.

Now suppose $\beta=1$ under either (a) Assumptions \ref{ass:c1} and \ref{ass:b2} or (b) Assumption \ref{ass:b3}, by Corollary~\ref{cor:ep>beta}, for any $\delta>0$, $\sup_{D \in \mathcal{B}(D_0, \delta)} \,  \inf_{\hat{\BFtheta} \in \hat{\mathcal{S}}_\alpha^1(D,f)} \| \hat{\BFtheta} \| = \infty$. According to Strassen's Theorem \cite[Theorem 2.13]{huber2009robust}, $\mathcal{B}(D_0, \delta) \subseteq \mathcal{B}_{\text{Prok}}(D_0, \delta)$ and  we have
\[
\sup_{D \in \mathcal{B}(D_0, \delta)} \,  \inf_{\hat{\BFtheta} \in \hat{\mathcal{S}}_\alpha^1(D, f)} \| \hat{\BFtheta} \| \leq \sup_{D \in \mathcal{B}_{\text{Prok}}(D_0, \delta)}\inf_{\hat{\BFtheta} \in \hat{\mathcal{S}}_\alpha^1(D, f)} \| \hat{\BFtheta} \|,
\]
which contradicts \eqref{eq:necessary}. Hence, $\beta \neq 1$.
\Halmos\end{proof}

Combining Corollary \ref{thm:sufficient_qrao} and Theorem  \ref{thm: qrao_necessary}, 
we conclude that the In-CVaR based estimator is qualitatively robust if and only if $\beta \neq 1$ under appropriate conditions.  

\section{Numerical Experiments}
\label{sec:exp}

We conduct numerical experiments on synthetic datasets to support the theoretical properties presented in the previous sections. 
Sections \ref{subsection:exp_BP} and \ref{subsection:exp_QRAO} present computational results on the distributional BP and the qualitative robustness, respectively. All experiments are conducted for In-CVaR based piecewise affine regression under the $\ell_1$ loss. Unless otherwise stated, the noise variable $\xi$ follows the standard normal distribution. All experiments are implemented in Python on macOS Sonoma 14.2.1, equipped with an Apple M1 Pro chip and 16 GB of RAM. The convex subproblems arising in the difference-of-convex algorithm (DCA) are solved using the L-BFGS-B solver from SciPy.

\subsection{Distributional BP Results for In-CVaR Based Piecewise Affine  Regression}
\label{subsection:exp_BP}

Denote $\BFtheta \triangleq \left(\{(a_i, b_i)\}_{i=1}^2, \{(c_j, d_j)\}_{j=1}^2\right) \in \mathbb{R}^8$, we consider the In-CVaR based regression
\begin{equation}
\label{exp:piecewise}
    \min_{\BFtheta \in \mathbb{R}^{8}} \mbox{In-CVaR}_{\alpha}^\beta \left(\,\abs{\displaystyle{\max_{i=1, 2}} \{a_i X_D + b_i\} -\displaystyle{\max_{j=1, 2}}\{c_j X_D +d_j\} - Y_D}\right).
\end{equation}
Following the analysis in Section 3.1 of \cite{liu2022risk}, \eqref{exp:piecewise} admits a difference-of-convex representation, which enables DCA to solve the problem efficiently.

Let $N(\mu, \sigma^2)$ denote the normal distribution with mean $\mu$ and variance $\sigma^2$.  We generate 200 independent samples $\{(x_{D_0}^{(j)}, y_{D_0}^{(j)})\}_{j=1}^{200}$ from $X \sim N(0,1)$, 
\[
Y= \max\{-X + 1, -2\} - \max \{-X + 3, -2X + 2\}+ 0.05\xi,
\]
and take the empirical distribution as the nominal distribution $D_0$. Similarly, we  generate 200 independent samples $\{(x_{G}^{(j)}, y_{G}^{(j)})\}_{j=1}^{200}$ from $X \sim N(0, 4\times 10^4)$,
\[
Y=\max\{100X + 200, 300X -400\} - \max\{200X - 100, 400X + 100\} +0.05 \xi,
\]
and take the empirical distribution as the contamination $G$. Then, the contaminated distribution with the contamination ratio $\varepsilon$ is constructed as $D(\varepsilon) = (1-\varepsilon)D_0 + \varepsilon G.$ We construct a regular grid $\{\tilde{x}^{(i)}\}_{i=1}^{2000} \subseteq \mathbb{R}$ over $[-100,100]$ with step size 0.1 and define
\[
\overline T(\BFtheta) \triangleq \frac{1}{2000} \sum_{i=1}^{2000} \log_{10}(|f(\tilde{x}^{(i)}, \BFtheta)|).
\]
For parameters $(\alpha,\beta,\gamma)=(0.05,0.95,0.5)$, we compare the In-CVaR based estimator $\overline T( \hat\BFtheta_\alpha^\beta(D(\varepsilon),f))$, the expectation based estimator $\overline T(\hat\BFtheta_{\mathrm E}(D(\varepsilon),f))$ and the CVaR based estimator $\overline T( \hat\BFtheta_{\gamma\text{-CVaR}}(D(\varepsilon),f))$ under varying contamination levels within $[0,0.1]$ in Figure \ref{fig:piecewise_contamination_log10}. It shows that the expectation and CVaR based estimators have high volatility with any $\varepsilon >0$, while the In-CVaR estimator remains stable until $\varepsilon>0.05$, with the deviation from the true value bounded by $0.023$. This is consistent with the theoretical BP analysis that breakdown occurs roughly at $1-\beta=0.05$.
\begin{figure}[h]
    \FIGURE
    {\includegraphics[width=0.6\textwidth]{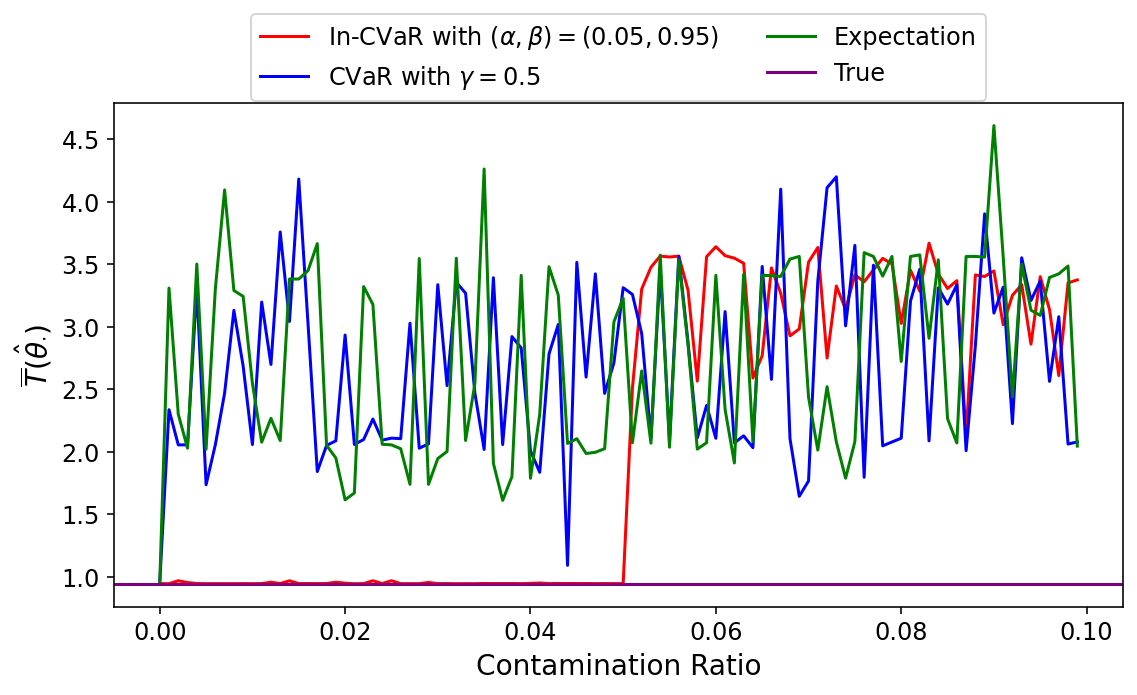}} 
    {
    Estimated regression functions under the varying contamination levels. 
    \label{fig:piecewise_contamination_log10}}
    {}
\end{figure}

We next investigate the effect of different $(\alpha, \beta)$ levels under a fixed contamination ratio $\varepsilon = 0.05$. As shown in  Figure~\ref{fig:piece_linear_beta_alpha}, the distributional BP depends on the upper level $\beta$, rather than the lower level $\alpha$. In Figure~\ref{fig:piecewise_beta}, the estimated $\overline T( \hat\BFtheta_{0.05}^\beta(D(0.05),f))$ remains relatively stable as $\varepsilon < 1-\beta$. When $\beta > 0.95$, breakdown occurs and $\overline T( \hat\BFtheta_{0.05}^\beta(D(0.05),f))$ fluctuates significantly. Figure~\ref{fig:piecewise_alpha} shows that under the varying $\alpha \in [0, 0.9]$, when $\varepsilon < 1 - \beta_1 = 0.06$, the estimated regression function values remain stable across all values of $\alpha \in [0,0.9]$. In contrast, when $\varepsilon > 1 - \beta_2 = 0.04$, the estimated regression function values break down and fail to approximate the true function value.
\begin{figure}[h]
\FIGURE
    {
    \subcaptionbox{Varying $\beta \in [0.9, 1]$ with fixed $\alpha = 0.05$. \label{fig:piecewise_beta}}
    {\includegraphics[width=0.48\textwidth]{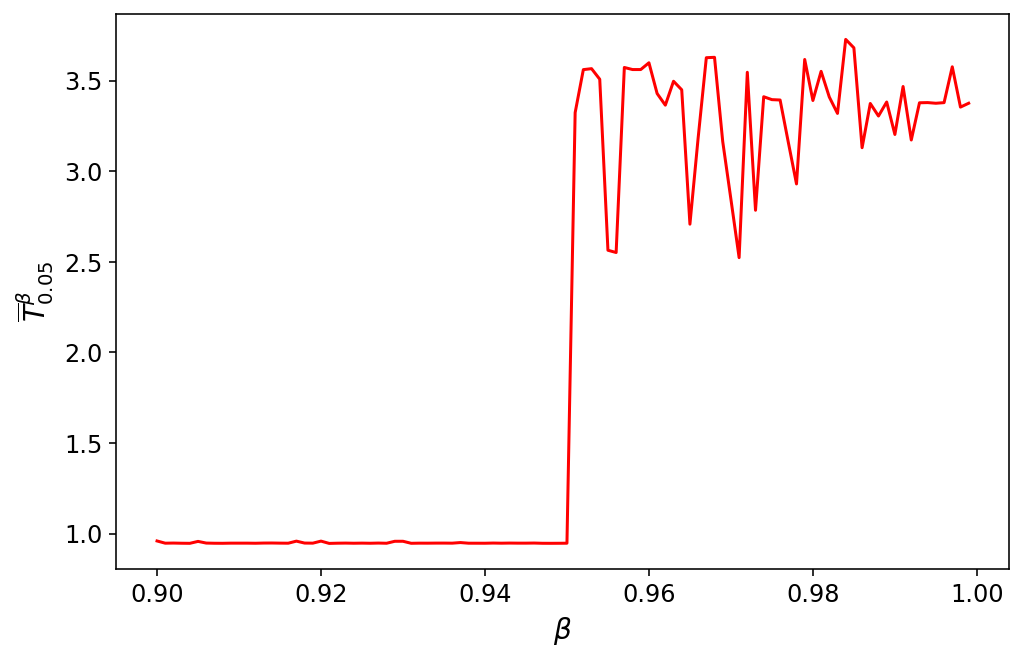}}
    \hfill
    \subcaptionbox{Varying $\alpha \in [0, 0.9]$ with fixed $\beta_1 = 0.94$ and $\beta_2 = 0.96$. \label{fig:piecewise_alpha}}
    {\includegraphics[width=0.48\textwidth]{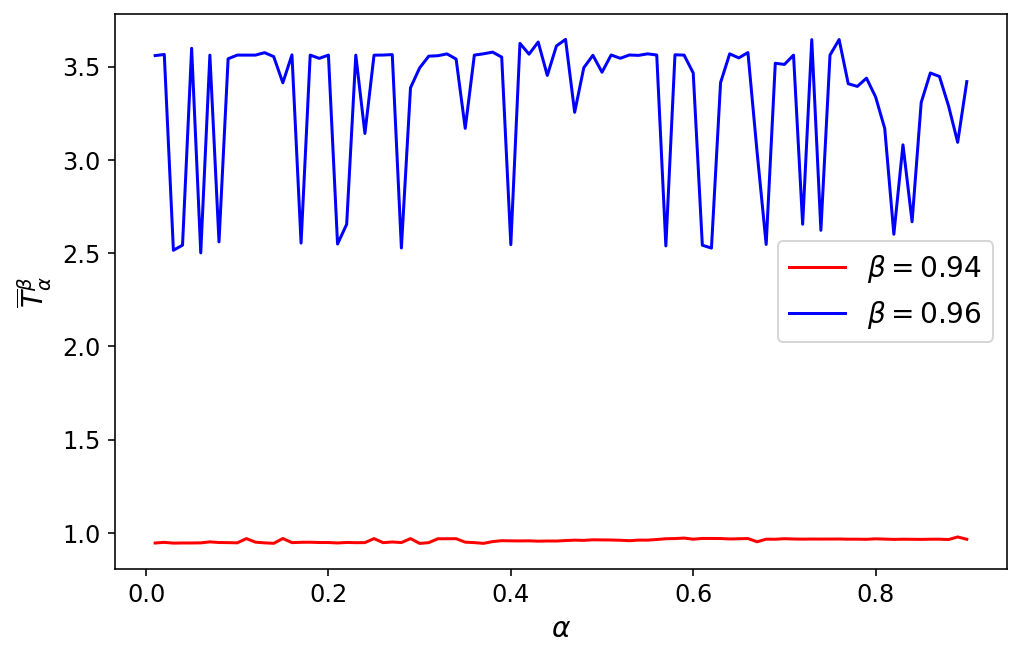}}
    }
    {Effect of levels $(\alpha, \beta)$ of In-CVaR  estimator with $\varepsilon = 0.05.$
    \label{fig:piece_linear_beta_alpha}}
    {The y-axis represents $\bar T( \hat\BFtheta_\alpha^\beta(D(0.05),f))$.}
\end{figure}

\subsection{Qualitative Robustness Results for In-CVaR Based Piecewise Affine  Regression}
\label{subsection:exp_QRAO}

We consider the In-CVaR based piecewise affine regression under the $\ell_1$ loss with the formulation given in \eqref{exp:piecewise} and the nominal distribution $D_0$ generated in Section \ref{subsection:exp_BP}. For each  $k \in \mathbb{N}_+$, we construct perturbed distributions $D_k$ as follows. We generate 200 design points $\{x_{k}^{(j)}\}_{j=1}^{200}$ from 0 to 1990 with step size 10, and
\[
y_{k}^{(j)} = k\big(\max\big\{x_{k}^{(j)} + 2, 3x_{k}^{(j)}-4\big\} - \max\big\{2 x_{k}^{(j)} - 1, 4x_{k}^{(j)}+1\big\}\big)+0.05 \xi^{(j)}.
\]
Let $G_k$ be the empirical distribution of $\{\boldsymbol{z}_{k}^{(j)}\}_{j=1}^{200} = \{(x_{k}^{(j)}, y_{k}^{(j)}) \}_{j=1}^{200}$ and define
\[
\boldsymbol{Z}_{D_k} = \left\{
\begin{array}{ll}
    \boldsymbol{Z}_{D_0} + \frac{1}{k} \BFdelta, & \text{with probability } 1 - \frac{1}{k}; \\
    \boldsymbol{Z}_{G_k} + \frac{1}{k} \BFdelta, & \text{with probability } \frac{1}{k}, 
\end{array}
\right.
\]
where $\BFdelta$ is uniformly distributed within the unit ball in $\mathbb{R}^2$. By Strassen’s theorem \cite[Theorem 2.13]{huber2009robust}, we have $d_{\text{Prok}}(D_k, D_0) \leq 1/k$. For each $k$, we draw $n=1000$ independent  samples $\{\boldsymbol{z}_{D_k}^{(i)}\}_{i=1}^{1000}$ from $D_k$. Figure \ref{fig:piecewise_perturbation_log10} compares the In-CVaR based estimator $\overline T(\hat\BFtheta_\alpha^\beta(D_k^{(n)},f))$, the expectation based estimator $\overline T(\hat\BFtheta_{\mathrm E}(D_k^{(n)},f))$ and the CVaR based estimator $\overline T( \hat\BFtheta_{\gamma\text{-CVaR}}(D_k^{(n)},f))$.  As $k$ increases, In-CVaR based estimator remains close to the true regression, while both the CVaR and expectation based estimators exhibit large deviations.
\begin{figure}[h]
    \FIGURE
    {\includegraphics[width=0.6\textwidth]{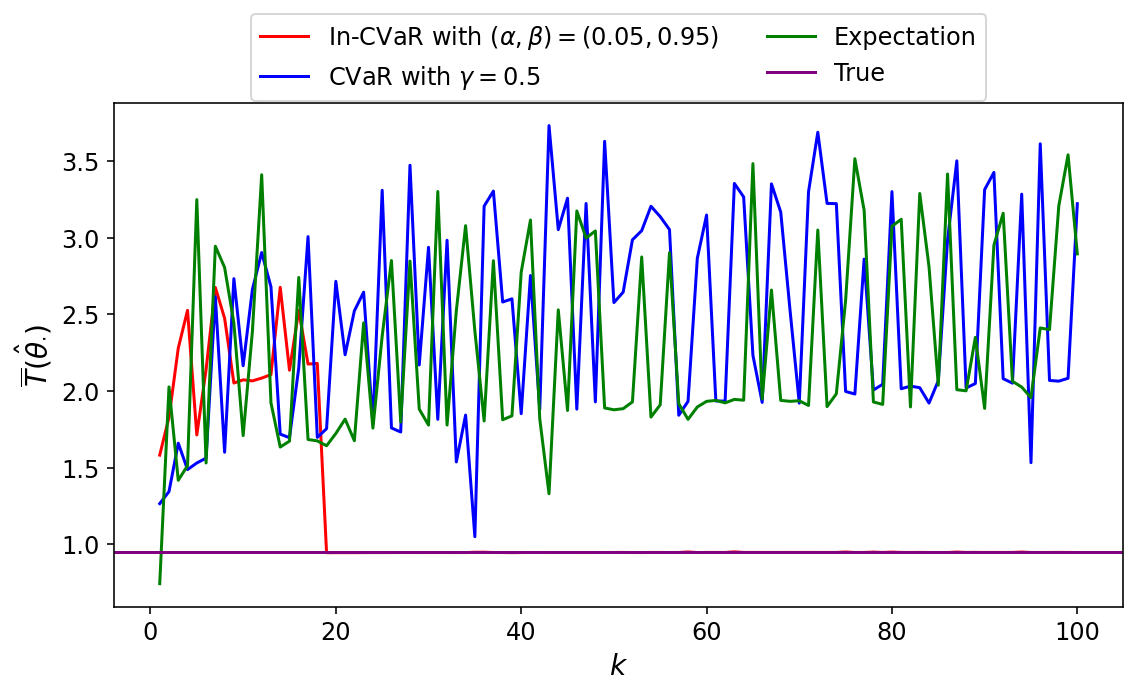}} 
    {
    Estimated regression functions under perturbation. 
    \label{fig:piecewise_perturbation_log10}}
    {}
\end{figure}

\section{Conclusion}

\label{sec:conclusion}

In this paper, we investigate the robustness of In-CVaR based regression under data contamination and perturbation. For contamination, we introduce the notion of the distributional BP. We derive both upper and lower bounds on the distributional BP of the In-CVaR based estimator. For perturbation, we show that the In-CVaR based estimator is qualitatively robust if and only if it trims the largest portion of  losses. The theoretical results are presented in Sections~\ref{sec:BP} and~\ref{sec:qrao} and are supported by numerical experiments in Section~\ref{sec:exp}. This leads to several  promising  directions for future work. One is  to further investigate quantitative robustness under perturbation, for instance through the influence function as in \cite{cont2010robustness} and \cite{zhang2024statistical}. It is also worth investigating the computational workhorse for applying In-CVaR to deep neural networks given the robustness properties established in the present paper.

\bibliographystyle{informs2014}
\bibliography{main}

\newpage

\setcounter{page}{1}\def\thepage{ec\arabic{page}}%
\par\noindent{\raggedright\fs.15.18.\bf 
\begin{center}
  $ $\\
  Supplementary Material for\\[8pt]
  \myfulltitle
\end{center}\endgraf}\vspace{8pt}
\begin{APPENDICES}
\section{Common Regression and Loss Functions}
\phantomsection
\label{app:funs}

\subsection{Regression Functions}
\phantomsection
    \label{ex:regression_function}
    \begin{enumerate}[label = (\alph*)]
        \item Linear regression: $f(\boldsymbol{x}, \BFtheta) = \BFtheta_1^\top \boldsymbol{x} + \theta_0$, where $\BFtheta = (\BFtheta_1, \theta_0) \in \mathbb{R}^{p+1}$.
        \item Piecewise affine regression: 
        \[
        f(\boldsymbol{x}, \BFtheta) = \max_{i \in [I]} \{\boldsymbol{a}_i^\top \boldsymbol{x} + b_i\} - \max_{j \in [J]} \{\boldsymbol{c}_j^\top \boldsymbol{x} + d_j\},
        \]
        where $\BFtheta = \big(\{(\boldsymbol{a}_i, b_i)\}_{i=1}^I, \{(\boldsymbol{c}_j, d_j)\}_{j=1}^J\big) \in \mathbb{R}^{(I+J)(p+1)}$.
        \item Polynomial regression of degree $n$:
        \[
        f(\boldsymbol{x}, \BFtheta) = \sum_{0 \leq n_1+ \cdots+ n_p \leq n} \theta_{n_1, \cdots, n_p} \prod_{i=1}^p x_i^{n_i},
        \]
        where $\BFtheta = \{ \theta_{n_1, \cdots, n_p}\}_{0 \leq n_1+ \cdots+ n_p \leq n} \in \mathbb{R}^{(n+p)!/(n!p!)}$.
        \item Exponential regression: $
        f(\boldsymbol{x}, \BFtheta) = \theta_0 \exp(\BFtheta_1^\top \boldsymbol{x})$, where $\BFtheta = (\BFtheta_1, \theta_0) \in \mathbb{R}^p \times \mathbb{R}_+\backslash\{0\}$.
        \item  Logarithmic regression (with $x_i>0$ for $i \in [p]$):  $ f(\boldsymbol{x}, \BFtheta) = \theta_0 + \sum_{i=1}^p \theta_i \ln(x_i)$, where $\BFtheta = (\theta_0, \theta_1, \cdots, \theta_p) \in \mathbb{R}^{p+1}$.
        \item Power regression (with $x_i>0$ for $i \in [p]$): $f(\boldsymbol{x}, \BFtheta) = \theta_0\prod_{i=1}^p x_i^{\theta_i}$, where $\BFtheta = (\theta_0, \theta_1, \cdots, \theta_p)\in \mathbb{R}_+\backslash\{0\} \times \mathbb{R}^{p}$.
        \item $L$-layer feedforward neural network:  
        \[
        \begin{aligned}
            f(\boldsymbol{x}, \BFtheta) = & \phi^{(L)}\left(A^{(L)}\phi^{(L-1)}\left( \cdots\phi^{(1)}\left(A^{(1)}\boldsymbol{x} + \boldsymbol{b}^{(1)}\right)  \cdots  \right)+ \boldsymbol{b}^{(L)}\right),
        \end{aligned}
        \]
        where $\BFtheta = \{(A^{(\ell)}, \boldsymbol{b}^{(\ell)})\}_{\ell=1}^L$ and $\phi^{(\ell)}$ is the activation function in layer $\ell$.
    \end{enumerate}

\subsection{Loss Function}
\phantomsection
    \label{ex:loss_function}
    \begin{enumerate}[label = (\alph*)]
        \item $\ell_2$ loss: $\mathcal{L}(t) = t^2.$
        \item $\ell_1$ loss: $\mathcal{L}(t) = \abs{t}.$
        \item Huber loss:
        \[
        \mathcal{L}_\delta(t) = 
        \begin{cases}
        t^2/2, & \text{if } t \in [0, \delta], \\
        \delta\left(t -\delta/2\right), & \text{otherwise}.
        \end{cases}
        \]
    \end{enumerate}

\section{Verification of Assumptions \ref{ass:b1}-\ref{ass:b3} for Typical Regression Functions}
\phantomsection \label{app:verification}

\subsection{Verification of Assumption \ref{ass:b3} for Typical Regression Functions}
\phantomsection \label{app:assumption_b3}
\noindent$\bullet$ \textbf{Linear regression}: $f\left(\boldsymbol{x}, \BFtheta\right)=\BFtheta_1^{\top} \boldsymbol{x}+\theta_0$ with $\mathcal{X} = \mathbb{R}^p$ and $\Theta = \mathbb{R}^{p+1}$. 

For any $k > 0$, set $\BFtheta^{(k)}_1 = (k+1)\BFe_1$ and $\theta^{(k)}_0 = 0$. Then, 
    \[
    \begin{aligned}
        & \sup_{\boldsymbol{x}\in \mathcal{X}} \inf_{\{\BFtheta\in \Theta: \|\BFtheta\| \leq k\}} \big|{f(\boldsymbol{x}, \BFtheta^{(k)}) - f(\boldsymbol{x}, \BFtheta)}\big| \geq \lim_{m\to \infty}\inf_{\{\BFtheta\in \Theta: \|\BFtheta\| \leq k\}} \big|f(m\BFe_1, \BFtheta^{(k)}) - f(m\BFe_1, \BFtheta)\big|\\
        & \qquad =  \lim_{m \to \infty}\inf_{\{\BFtheta\in \Theta: \|\BFtheta\| \leq k\}} \abs{(k+1)m - m\BFtheta_{1,1} - \theta_0} \geq \lim_{m \to \infty} (m - k) = \infty.
    \end{aligned}
    \]
    For part (b), take $\{\Tilde{\boldsymbol{x}}_i\}_{i=1}^{p+1} =  \{\boldsymbol{0}, \BFe_1, \BFe_2, \cdots, \BFe_p\}$. Then
    \[
    \begin{aligned}
        & \lim_{k \rightarrow \infty} \inf_{\{\BFtheta \in \Theta: \| \BFtheta \| > k\}} \max_{i \in [p+1]}\abs{f(\Tilde{\boldsymbol{x}}_i, \BFtheta)} \\
        & \qquad= \lim_{k \rightarrow \infty} \inf_{\{\BFtheta \in \Theta: \| \BFtheta \| > k\}}  \max \left\{\,\abs{\theta_0}, \abs{\BFtheta_{1,1} + \theta_0}, \cdots, \abs{\BFtheta_{1,p} + \theta_0}\right\} = \infty.
    \end{aligned}
    \]
    
\noindent$\bullet$ \textbf{Polynomial regression}: $f(\boldsymbol{x}, \BFtheta) = \sum_{0 \leq n_1 + \cdots + n_p \leq n} \theta_{n_1, \cdots, n_p} \prod_{i=1}^p x_i^{n_i}$ with $\mathcal{X} = \mathbb{R}^p$ and $\Theta = \mathbb{R}^{(n+p)!/n!p!}$. 

For any fixed $k > 0$, set $\theta_{n, 0, \cdots, 0}^{(k)} = k+1$ and all other entries zero. Then 
\[
\begin{aligned}
    & \sup_{\boldsymbol{x} \in \mathcal{X}} \inf_{\{\BFtheta \in \Theta: \| \BFtheta \| \leq k\}} \big|{f(\boldsymbol{x}, \BFtheta^{(k)}) - f(\boldsymbol{x}, \BFtheta)}\big| \geq \lim_{m \to \infty}\inf_{\{\BFtheta \in \Theta: \| \BFtheta \| \leq k\}} \big|{f(m\BFe_1, \BFtheta^{(k)}) - f(m\BFe_1, \BFtheta)}\big|\\
    & \qquad = \lim_{m \to \infty}\inf_{\{\BFtheta \in \Theta: \| \BFtheta \| \leq k\}} \Big|{(k+1) m^n - \sum_{i=1}^n \theta_{i, 0,\cdots, 0} m^i}\Big| \geq  \lim_{m \to \infty} \big(m^n - k \sum_{i=1}^{n-1} m^i\big) = \infty.
\end{aligned}
\]
For part (b), set $\{\Tilde{\boldsymbol{x}}_{i}\}_{i=1}^{(n+1)^p} = \{(x_1, \cdots, x_p): x_i \in  \{0, \cdots, n\}, i \in  [p]\}$. It suffices to show that for any $M > 0$, if $\max_{i \in [(n+1)^p]} \abs{f(\Tilde{\boldsymbol{x}}_i, \BFtheta)} \leq M$, then all coefficients $\theta_{n_1,\dots,n_p}$ with $0\le n_1+\cdots+n_p\leq n$ are bounded. We use induction on the number of nonzero indices in $(n_1, \ldots, n_p)$. The case of no nonzero indices holds since $|\theta_{0,\dots,0}|=|f(\mathbf0,\BFtheta)|\leq M$. Assume that all coefficients with at most $t$ nonzero indices are bounded, for some $t \in \{0, \cdots, \min\{p,n\}-1\}$. Consider coefficients $\theta_{n_1,\dots,n_p}$ with exactly $t+1$ nonzero indices. Without loss of generality, assume $n_1,\dots,n_{t+1}>0$ and $n_{t+2}=\dots=n_{p}=0$. Define
\[
\begin{aligned}
    & A_{t,j} \triangleq \{(n_1, \cdots, n_t), n_i\in \mathbb{N}, 0\leq n_1 + \cdots + n_t \leq n-j\},\\
    & S_{t,j} \triangleq \{(n_1, \cdots, n_t), n_i\in \mathbb{N}_+, t\leq n_1 + \cdots + n_t \leq n-j\}.
\end{aligned}
\]
Since $\max_{1 \leq x_1,  \cdots , x_{t+1}\leq n}\abs{\left\{f\big(\sum_{i=1}^{t+1} x_i \BFe_i, \BFtheta\big)\right\} } \leq M$, we have
\[
\begin{aligned}
    M \geq & \max_{1 \leq x_1 ,  \cdots, x_{t+1}\leq n} \abs{\left\{\sum_{0 \leq n_1 + \cdots + n_{t+1} \leq n} \theta_{n_1, \cdots, n_{t+1}, 0, \cdots,0} \prod_{i=1}^{t+1} x_i^{n_i}\right\}}\\
    = & \max_{1 \leq x_1 ,  \cdots,x_{t+1}\leq n } \left|\left\{\sum_{j=1}^{n-t} x_{t+1}^j\sum_{(n_1, \cdots, n_t) \in S_{t,j}} \theta_{n_1, \cdots, n_{t},j,0,\cdots, 0} \prod_{i=1}^{t} x_i^{n_i} \right. \right.\\
    &  + \sum_{(n_1, \cdots, n_t) \in S_{t,0}} \theta_{n_1, \cdots, n_{t},0,\cdots, 0} \prod_{i=1}^{t} x_i^{n_i} + \left. \left. \sum_{j=0}^n x_{t+1}^j\sum_{(n_1, \cdots, n_t) \in A_{t,j}\backslash S_{t,j}} \theta_{n_1, \cdots, n_{t},j,0,\cdots, 0} \prod_{i=1}^{t} x_i^{n_i}\right\}\right|.
\end{aligned}
\]
By the induction hypothesis, $\theta_{n_1,\cdots, n_p}$ are bounded for all indices with at most $t$ nonzero entries. Thus, there exists $M_{t+1} >0$ such that for all $1 \leq x_1, \cdots, x_{t}\leq n$,
\[
\begin{aligned}
    M_{t+1} & \geq \max_{1 \leq x_{t+1}\leq n-t } \left|\left\{\sum_{j=1}^{n-t} x_{t+1}^j\sum_{(n_1, \cdots, n_t) \in S_{t,j}} \theta_{n_1, \cdots, n_{t},j,0,\cdots, 0} \prod_{i=1}^{t} x_i^{n_i} \right\} \right| \\
    & = \left\| \left(\begin{array}{cccc}
    1  & 1 & \cdots & 1 \\
    2  & 2^2 & \cdots & 2^{n-t} \\
    \vdots & \vdots & \ddots & \vdots \\
    n-t & (n-t)^2 & \cdots & (n-t)^{n-t} \\
    \end{array}\right)\left(\begin{array}{c}
        \sum_{(n_1, \cdots, n_t) \in S_{t,1}} \theta_{n_1, \cdots, n_{t},1, 0,\cdots, 0} \prod_{i=1}^{t} x_i^{n_i} \\
        \sum_{(n_1, \cdots, n_t) \in S_{t,2}} \theta_{n_1, \cdots, n_{t},2, 0,\cdots, 0} \prod_{i=1}^{t} x_i^{n_i} \\
        \vdots \\
        \sum_{(n_1, \cdots, n_t) \in S_{t,n-t}} \theta_{n_1, \cdots, n_{t},n-t, 0,\cdots, 0} \prod_{i=1}^{t} x_i^{n_i}
    \end{array}\right) \right\|_{\infty},
\end{aligned}
\]
where $\| \bullet \|_\infty$ denotes the maximal element for vectors and maximal row sum for matrices. Since the Vandermonde matrix is invertible, we have 
\[
\left\| \left(\begin{array}{c}
    \sum_{(n_1, \cdots, n_t) \in S_{t,1}} \theta_{n_1, \cdots, n_{t},1, 0,\cdots, 0} \prod_{i=1}^{t} x_i^{n_i} \\
    \sum_{(n_1, \cdots, n_t) \in S_{t,2}} \theta_{n_1, \cdots, n_{t},2, 0,\cdots, 0} \prod_{i=1}^{t} x_i^{n_i} \\
    \vdots \\
    \sum_{(n_1, \cdots, n_t) \in S_{t,n-t}} \theta_{n_1, \cdots, n_{t},n-t, 0,\cdots, 0} \prod_{i=1}^{t} x_i^{n_i} 
    \end{array}\right)\right\|_\infty\leq M_{t+1} \left\| \left(\begin{array}{cccc}
    1  & 1 & \cdots & 1 \\
    2  & 2^2 & \cdots & 2^{n-t} \\
    \vdots & \vdots & \ddots & \vdots \\
    n-t & (n-t)^2 & \cdots & (n-t)^{n-t} \\
    \end{array}\right)^{-1}\right\|_\infty,
\]
which implies $|\sum_{(n_1, \cdots, n_t) \in S_{t,j}} \theta_{n_1, \cdots, n_{t},j, 0,\cdots, 0} \prod_{i=1}^{t} x_i^{n_i}|$ are bounded for $1\leq x_1, \cdots, x_t \leq n$ and $j \in  [n-t]$. A backward argument on $t$ implies that  $\theta_{n_1, \cdots, n_{t+1}, 0, \cdots, 0}$ are bounded for any $(n_1, \cdots, n_{t+1})\in S_{t+1, 0}$. Finally, by induction over $t$, $\theta_{n_1,\cdots, n_p}$ are bounded for all $(n_1, \cdots, n_p)$ with $0 \leq n_1 + \cdots +n_p \leq n$. 

\noindent $\bullet$ \textbf{Exponential regression}: $f(\boldsymbol{x}, \BFtheta) = \theta_0 \exp(\BFtheta_1^\top \boldsymbol{x})$ with $\mathcal{X} = \mathbb{R}^p$ and $\Theta = [\tau, \infty) \times \mathbb{R}^p$ for some $\tau > 0$. 

For any $k > 0,$ set $\theta_0^{(k)} = 1$ and $\BFtheta_1^{(k)} = (k+1) \BFe_1$, then 
\[
\begin{aligned}
    & \sup_{\boldsymbol{x} \in \mathcal{X}} \inf_{\{\BFtheta \in \Theta: \| \BFtheta \| \leq k\}} \big|{f(\boldsymbol{x}, \BFtheta^{(k)}) - f(\boldsymbol{x}, \BFtheta)}\big| \geq \lim_{m \to \infty}\inf_{\{\BFtheta \in \Theta: \| \BFtheta \| \leq k\}} \big|{f(m\BFe_1, \BFtheta^{(k)}) - f(m\BFe_1, \BFtheta)}\big|\\
    & \qquad = \lim_{m \to \infty}\inf_{\{\BFtheta \in \Theta: \| \BFtheta \| \leq k\}} \abs{\exp((k+1)m) - \theta_0\exp(m\BFtheta_{1,1})} \\
    & \qquad \geq  \lim_{m \to \infty} \big(\exp((k+1)m)  - k \exp(km)\big) = \infty.
\end{aligned}
\]
For part (b), take $\{\Tilde{\boldsymbol{x}}_i\}_{i=1}^{2p+1} = \{\boldsymbol{0}, \BFe_1, -\BFe_1, \cdots, \BFe_p, -\BFe_p\}$, then 
\[
\begin{aligned}
    & \lim_{k \rightarrow \infty} \inf_{\{\BFtheta \in \Theta: \| \BFtheta \| > k\}} \max_{i \in [2p+1]}\abs{f(\Tilde{\boldsymbol{x}}_i, \BFtheta)} \\
    & \qquad= \lim_{k \rightarrow \infty} \inf_{\{\BFtheta \in \Theta: \| \BFtheta \| > k\}} \theta_0 \max \left\{\,1, \exp(\BFtheta_{1,1}), \exp(-\BFtheta_{1,1}), \cdots, \exp(\BFtheta_{1,p}), \exp(-\BFtheta_{1,p})\right\} = \infty.
    \end{aligned}
\]

\noindent $\bullet$ \textbf{Logarithmic regression}: $f(\boldsymbol{x}, \BFtheta) = \theta_0 + \sum_{i=1}^p \theta_i \ln(x_i)$ with $\mathcal{X} = (\mathbb{R}_+\backslash\{0\})^{\otimes p}$ and $\Theta = \mathbb{R}^{p+1}$, where $(\mathbb{R}_+\backslash\{0\})^{\otimes p}$ denotes the Cartesian product of $p$ copies of $\mathbb{R}_+\backslash\{0\}$. Verification of Assumption \ref{ass:b3} is straightforward by following the same argument used in the linear regression case.

\noindent $\bullet$ \textbf{Power regression}: $f(\boldsymbol{x}, \BFtheta) =  \theta_0\prod_{i=1}^p x_i^{\theta_i}$ with $\mathcal{X} = (\mathbb{R}_+\backslash\{0\})^{\otimes p}$ and $\Theta = [\tau, \infty) \times \mathbb{R}^p$ for some $\tau > 0$.

For any fixed $k > 0,$ set $\theta_0^{(k)} = 1$, $\theta_1^{(k)} = k+1$ and $\theta_i^{(k)} = 0$ for $i \in \{2, \cdots, p\}$, then 
\[
\begin{aligned}
    & \sup_{\boldsymbol{x} \in \mathcal{X}} \inf_{\{\BFtheta \in \Theta: \| \BFtheta \| \leq k\}} \big|{f(\boldsymbol{x}, \BFtheta^{(k)}) - f(\boldsymbol{x}, \BFtheta)}\big| \\
    & \qquad \geq \lim_{m \to \infty}\inf_{\{\BFtheta \in \Theta: \| \BFtheta \| \leq k\}} \abs{f\left(m\BFe_1 + \sum_{i=2}^p \BFe_i, \BFtheta^{(k)}\right) - f\left(m\BFe_1 + \sum_{i=2}^p \BFe_i, \BFtheta\right)}\\
    & \qquad = \lim_{m \to \infty}\inf_{\{\BFtheta \in \Theta: \| \BFtheta \| \leq k\}} \abs{m^{k+1} - \theta_0 m^{\theta_1}} \geq  \lim_{m \to \infty} (m^{k+1}  - k m^k) = \infty.
\end{aligned}
\]
For part (b), take 
\[
\{\Tilde{\boldsymbol{x}}_i\}_{i=1}^{2p+1} = \left\{\sum_{i=1}^p \BFe_i, 2\BFe_1 + \sum_{i=2}^p \BFe_i, \frac{1}{2}\BFe_1 + \sum_{i=2}^p \BFe_i, \cdots, \sum_{i=1}^{p-1} \BFe_i+ 2\BFe_p, \sum_{i=1}^{p-1} \BFe_i + \frac{1}{2}\BFe_p\right\},
\]
then 
\[
\begin{aligned}
    & \lim_{k \rightarrow \infty} \inf_{\{\BFtheta \in \Theta: \| \BFtheta \| > k\}} \max_{i \in [2p+1]}\abs{f(\Tilde{\boldsymbol{x}}_i, \BFtheta)} \\
    & \qquad= \lim_{k \rightarrow \infty} \inf_{\{\BFtheta \in \Theta: \| \BFtheta \| > k\}}  \theta_0 \max \left\{\,1, 2^{\theta_1}, (1/2)^{\theta_1},\cdots, 2^{\theta_p}, (1/2)^{\theta_p}\right\} = \infty.
\end{aligned}
\]

\subsection{Verification of Assumptions \ref{ass:b1} and \ref{ass:b2} for Neural Network Regression} 
\phantomsection 
\label{app:FNN}

In this subsection, we verify Assumptions \ref{ass:b1} and \ref{ass:b2} for an $L$-layer neural network regression model with positively homogeneous activation functions (e.g., linear, ReLU and leaky ReLU).

Let $M_\ell$ denote the number of neurons in layer $\ell$, let ${A}^{(\ell)}  = (a_{i,j}^{(\ell)})\in \mathbb{R}^{M_\ell \times M_{\ell-1}}$, $\boldsymbol{b}^{(\ell)} = (b^{(\ell)}_i) \in \mathbb{R}^{M_\ell}$ and $\phi^{(\ell)}$ denote the weight matrix,  bias vector, and  the activation function, respectively. The output of the network is
\[
f\left(\boldsymbol{x}, \left\{(A^{(\ell)}, \boldsymbol{b}^{(\ell)})\right\}_{\ell=1}^L\right) = \phi^{(L)}\left(A^{(L)}\phi^{(L-1)}\left(\cdots \phi^{(1)}\left(A^{(1)} \boldsymbol{x} + \boldsymbol{b}^{(1)}\right)  \cdots\right) + \boldsymbol{b}^{(L)}\right).
\]
Let $\boldsymbol{t}^{(\ell)} \in \mathbb{R}^{M_\ell}$ and $\boldsymbol{s}^{(\ell)} \in \mathbb{R}^{M_\ell}$ denote the pre-activation and activation vectors in layer $\ell$, respectively. Then 
\[
   \boldsymbol{t}^{(\ell)} = {A}^{(\ell)} \boldsymbol{s}^{(\ell-1)} + \boldsymbol{b}^{(\ell)},\quad
   \boldsymbol{s}^{(\ell)} = \phi^{(\ell)}(\boldsymbol{t}^{(\ell)}), \quad 
   \boldsymbol{s}^{(0)} = \boldsymbol{x}.
\]
Assumption \ref{ass:b2} holds immediately by taking $\Tilde{\boldsymbol{x}} = 0$ and $\Tilde{a}_{i,j}^{(1)} = 0$.
We next verify Assumption  \ref{ass:b1} under reparameterization. Consider the network
\[
\begin{aligned}
    & g\left(\boldsymbol{x}, \left\{(C^{(\ell)}, \boldsymbol{d}^{(\ell)}) \right\}_{\ell=1}^L\right) = \phi^{(L)}\left(P^{(L)}(C^{(L)})\phi^{(L-1)}\right.\\
    & \qquad \left. \left(\cdots \phi^{(1)}\left(P^{(1)}(C^{(1)} )\boldsymbol{x} + q^{(1)}(\boldsymbol{d}^{(1)})\right) \cdots \right) + q^{(L)}(\boldsymbol{d}^{(L)})\right),
\end{aligned}
\]
where ${C}^{(\ell)}  = (c_{i,j}^{(\ell)})\in \mathbb{R}^{M_\ell \times M_{\ell-1}}$,  $\boldsymbol{d}^{(\ell)} = (d_i^{(\ell)}) \in \mathbb{R}^{M_\ell}$, $P^{(\ell)}(C^{(\ell)}) = \left(\mbox{sign}(c_{i,j}^{(\ell)}) \big|c_{i,j}^{(\ell)}\big|^{1/L}\right)$, $q^{(\ell)}(\boldsymbol{d}^{(\ell)}) = \left(\mbox{sign}(d_{i}^{(\ell)}) \big|d_{i}^{(\ell)}\big|^{{\ell}/{L}}\right)$ and $\mbox{sign}(t) = 1$ if $t>0$, 0 if $t=0$ and $-1$ if $t <0$. Let $\bar{\boldsymbol{t}}^{ (\ell)}$ and $\bar{\boldsymbol{s}}^{ (\ell)}$ denote the corresponding pre-activation and activation vectors of $g$, then
\[
    \bar{\boldsymbol{t}}^{ (\ell)} =  \left( \mbox{sign}(c_{i,j}^{(\ell)}) \big|{c_{i,j}^{(\ell)}}\big|^{\frac{1}{L}} \right) \bar{\boldsymbol{s}}^{ (\ell-1)} + \left(\mbox{sign}(d_{i}^{(\ell)}) \big|d_{i}^{(\ell)}\big|^{\frac{\ell}{L}} \right),
    \quad \bar{\boldsymbol{s}}^{ (\ell)} = \phi^{(\ell)}(\bar{\boldsymbol{t}}^{ (\ell)}), \quad \bar{\boldsymbol{s}}^{ (0)} = \boldsymbol{x}.
\]
Under the one-to-one mapping
\[
    T\left(\left\{(A^{(\ell)}, \boldsymbol{b}^{(\ell)})\right\}_{\ell=1}^L\right) = \left\{\left(\left(\mbox{sign}(a_{i,j}^{(\ell)}) \big|a_{i,j}^{(\ell)}\big|^{L}\right), \left(\mbox{sign}(b_{i}^{(\ell)}) \big|b_{i}^{(\ell)}\big|^{\frac{L}{\ell}}\right)\right)\right\}_{\ell=1}^L,
\]
we have $
f\left(\boldsymbol{x}, \left\{(A^{(\ell)}, \boldsymbol{b}^{(\ell)})\right\}_{\ell=1}^L\right) = g\left(\boldsymbol{x}, T\left(\left\{(A^{(\ell)}, \boldsymbol{b}^{(\ell)})\right\}_{\ell=1}^L\right) \right),$ and thus 
\[
\varepsilon^\prime\left(\left\{\left((\hat{A}^{(\ell)})_\alpha^\beta, (\hat{\boldsymbol{b}}^{(\ell)})_\alpha^\beta\right)\right\}_{\ell=1}^L,D_0, f\right) = \varepsilon^\prime\left( \left\{\left((\hat{C}^{(\ell)})_\alpha^\beta, (\hat{\boldsymbol{d}}^{(\ell)})_\alpha^\beta\right)\right\}_{\ell=1}^L, D_0, g\right).
\]
We now show that $g(\boldsymbol{x}, \bullet)$ is positively homogeneous. For any $\lambda \geq 0$, let $\bar{\boldsymbol{t}}^{(\ell)}_\lambda$ and   $\bar{\boldsymbol{s}}^{ (\ell)}_\lambda$ denote the pre-activation and activation vectors of $g\left(\boldsymbol{x}, \lambda\left\{(C^{(\ell)}, \boldsymbol{d}^{(\ell)}) \right\}_{\ell=1}^L\right)$, then
\[
\begin{aligned}
    & \bar{\boldsymbol{t}}^{(\ell)}_\lambda = \left(\mbox{sign}(\lambda c_{i,j}^{(\ell)})\big|{\lambda c_{i,j}^{(\ell)}}\big|^{\frac{1}{L}}\right)\bar {\boldsymbol{s}}^{(\ell-1)}_\lambda + \left(\mbox{sign}(\lambda d_{i}^{(\ell)})\big|{\lambda d_{i}^{(\ell)}}\big|^{\frac{\ell}{L}}\right), \,\, \bar{\boldsymbol{s}}^{ (\ell)}_\lambda = \phi^{(\ell)}(\bar{\boldsymbol{t}}^{(\ell)}_\lambda), \quad \bar{\boldsymbol{s}}^{ (0)}_\lambda = \boldsymbol{x}.
\end{aligned}
\]
We prove by induction that $\bar{\boldsymbol{s}}^{ (\ell)}_\lambda =  \lambda^{{\ell}/{L}}\bar{\boldsymbol{s}}^{ (\ell)}$ for all $\ell \in \{0, \cdots, L\}$, which  holds trivially for $\ell = 0$. Suppose $\bar{\boldsymbol{s}}^{ (\ell)}_\lambda = \lambda^{{\ell}/{L}}\bar{\boldsymbol{s}}^{ (\ell)}$ holds for some $\ell < L$, then
\[
\begin{aligned}
    \bar{\boldsymbol{t}}^{ (\ell+1)}_\lambda & = \left(\mbox{sign}(\lambda c_{i,j}^{(\ell + 1)})\big|{\lambda c_{i,j}^{(\ell + 1)}\big|}^{\frac{1}{L}}\right) \lambda^{\frac{\ell}{L}} \bar{\boldsymbol{s}}^{(\ell)} + \left(\mbox{sign}(\lambda d_{i}^{(\ell + 1)})\big|{\lambda d_{i}^{(\ell + 1)}}\big|^{\frac{\ell + 1}{L}}\right)
    = \lambda^{\frac{\ell+1}{L}} \bar{\boldsymbol{t}}^{ (\ell+1)}.
\end{aligned}
\]
Since $\phi^{(\ell+1)}$ is positively homogeneous, $\bar{\boldsymbol{s}}^{ (\ell+1)}_\lambda = \phi^{(\ell+1)}(\bar{\boldsymbol{t}}^{(\ell+1)}_\lambda) =    \lambda^{{(\ell+1)}/{L}}  \bar{\boldsymbol{s}}^{(\ell+1)}.$ Thus, by induction, the claim holds for all $\ell$, and in particular $\bar{\boldsymbol{s}}^{ (L)}_\lambda =  \lambda\bar{\boldsymbol{s}}^{ (L)}$. Therefore, $g(\boldsymbol{x}, \bullet)$ is positively homogeneous. Consequently,  Assumption \ref{ass:b1} is satisfied if the parameter space of $f$ is a cone, and the distributional BP for neural network regression follows by Theorems \ref{thm: positivehomo} and \ref{thm:decom}.  

\section{Proofs of Theorems and Properties}

\subsection{Proof of Theorem \ref{thm:decom}}
\phantomsection \label{app:thm:decom}
\begin{proof}{Proof.}
    Without loss of generality, assume ${\sup}_{\BFtheta_0 \in \Theta_0} h(\BFtheta_0) =  \infty.$ Suppose to the contrary that $\varepsilon^\prime( \hat{\mathcal{S}}_{\alpha}^{\beta}, D_0, f) > 1-\beta$. Then, for any $ \varepsilon \in \left(1 - \beta , \min \big\{  \varepsilon^\prime( \hat{\mathcal{S}}_{\alpha}^{\beta}, D_0, f), (2 - \alpha-\beta)/{2} \big\} \right)$, there exists $M < \infty$ such that
    \[
    \sup_{D \in \mathcal{B}(D_0, \varepsilon)} \sup_{\hat{\BFtheta}\in \hat{\mathcal{S}}_\alpha^\beta(D,f)} \, \abs{ h\big(\hat{\BFtheta}_0\big)} = \sup_{D \in \mathcal{B}(D_0, \varepsilon)} \sup_{\hat{\BFtheta}\in \hat{\mathcal{S}}_\alpha^\beta(D,f)} \, \abs{ f\big(\Tilde{\boldsymbol{x}}, \hat{\BFtheta}_1, \hat{\BFtheta}_0\big)} \leq M.
    \]
    For each $n \in \mathbb{N}_+,$ there exist $t_n > n$ and $\BFtheta_0^{(n)} \in \Theta_0$ such that $h\big(\BFtheta_0^{(n)}\big) = \lambda t_n,$ where 
    \[
    \lambda = \frac{(\beta + \varepsilon - 1)^{\frac{1}{k-1}}}{(\beta + \varepsilon - 1)^{\frac{1}{k-1}}+(1-\alpha-\varepsilon)^{\frac{1}{k-1}}} \in (0, {1}/{2}).
    \]
    Let $G_n$ be the degenerate distribution supported at $(\Tilde{\boldsymbol{x}}, t_n)$ and define  $D_n = (1-\varepsilon) D_0 + \varepsilon G_n.$ By Lemma \ref{lem: bound} (c), for sufficiently large $n$ and any  $\hat{\BFtheta}_\alpha^\beta (D_n, f) \in \hat{\mathcal{S}}_\alpha^\beta (D_n, f)$,  we have
    \begin{equation}
    \label{eq:thm:decom_eq1}
    \begin{aligned}
        & \mbox{In-CVaR}_{\alpha}^{\beta} \left(\mathcal{L}\left(\,\big|{f\big(\boldsymbol{X}_{D_n},  \hat{\BFtheta}_\alpha^\beta(D_n, f) \big) -Y_{D_n}}\big|\right)\right) \\
        & \qquad \geq  \displaystyle  \frac{\beta+\varepsilon-1}{\beta-\alpha}\mathcal{L}\left(\,\big|{h\big( (\hat{\BFtheta}_0)_\alpha^\beta(D_n, f)\big)-t_n}\big|\right) \geq \frac{\beta+\varepsilon-1}{\beta-\alpha} \mathcal{L}\left(|t_n-M|\right).
    \end{aligned}
    \end{equation}
    On the other hand, set $\bar\BFtheta^{(n)} = \big(\Tilde{\BFtheta}_1,  \BFtheta_0^{(n)}\big)$, then
    \[
    \mathcal{L}\left( \,\big|f\big(\boldsymbol{X}_{D_0},  \bar\BFtheta^{(n)}\big) -Y_{D_0}\big| \right) = \mathcal{L}\left(\,\abs{ \lambda t_n -Y_{D_0}}\right) \, \, \text{and} \,\, \mathcal{L}\left( \, \big|{f\big(\boldsymbol{X}_{G_n},  \bar\BFtheta^{(n)}\big) -Y_{G_n}}\big|\right) = \mathcal{L}\left( (1 - \lambda) t_n \right).
    \]
    We claim that for sufficiently large $n$,
    \begin{equation}
        \label{eq:thm2_contra}
        \mbox{In-CVaR}_{\alpha}^{\beta} \left(\mathcal{L}\left(\,\big|{f\big(\boldsymbol{X}_{D_n},  \bar \BFtheta^{(n)}\big) -Y_{D_n}}\big|\right)\right) < \frac{\beta + \varepsilon - 1}{\beta - \alpha} \mathcal{L}\left(|t_n-M|\right),
    \end{equation} 
    which contradicts \eqref{eq:thm:decom_eq1}.
    Noting that In-CVaR admits a decomposition into two CVaR terms,
    \begin{equation}
    \label{eq:thm:decom_1}
        \begin{aligned}
            & \mbox{In-CVaR}_{\alpha}^{\beta} \left(\mathcal{L}\left(\,\big|{f\big(\boldsymbol{X}_{D_n},  \bar\BFtheta^{(n)}\big) -Y_{D_n}}\big|\right)\right)  = \displaystyle \frac{1}{\beta - \alpha}\left((1-\alpha)\mbox{CVaR}_\alpha\left(\mathcal{L}\left(\,\big|{f\big(\boldsymbol{X}_{D_n},  \bar \BFtheta^{(n)}\big)-Y_{D_n}}\big|\right)\right) \right.\\
            & \qquad \quad  - \left.(1-\beta)\mbox{CVaR}_\beta\left(\mathcal{L}\left(\,\big|{f\big(\boldsymbol{X}_{D_n},  \bar \BFtheta^{(n)}\big) -Y_{D_n}}\big|\right)\right) \right).
        \end{aligned}
    \end{equation}
    Since $ \lambda \in (0,1/2)$ and $t_n \to \infty$, it follows that $\lim_{n \to \infty}\mathbb{P} \left( \mathcal{L}\left(\, \abs{\lambda t_n - Y_{D_0}}\right) \geq \mathcal{L}\left((1-\lambda) t_n \right) \right) = 0.$ Because $\varepsilon < 1-\alpha$, for sufficiently large $n$, we have $\mbox{VaR}_{ \alpha }\left(\mathcal{L}\left(\,\abs{f\big(\boldsymbol{X}_{D_n},  \bar\BFtheta^{(n)}\big) -Y_{D_n}}\right)\right) < \mathcal{L}\left((1-\lambda) t_n \right).$ Therefore, the losses of the contamination $G_n$ lie entirely in the upper $(1-\alpha)$-tail, yielding
    \begin{equation}
    \label{eq:thm:decom_2}
    \begin{aligned}
        & (1-\alpha)\mbox{CVaR}_\alpha\left(\mathcal{L}\left(\,\big|{f\big(\boldsymbol{X}_{D_n},  \bar \BFtheta^{(n)}\big)-Y_{D_n}}\big|\right)\right) \\
        & \qquad = (1- \alpha - \varepsilon) \mbox{CVaR}_{\alpha/(1-\varepsilon)} \left(\mathcal{L}\left(\,\big|{f\big(\boldsymbol{X}_{D_0},  \bar \BFtheta^{(n)}\big)-Y_{D_0}}\big|\right)\right) + \varepsilon \mathcal{L}\left((1-\lambda) t_n \right).
    \end{aligned}
    \end{equation}
    On the other hand,  $\mathbb{P}\left(\mathcal{L}\left(\,\abs{f\big(\boldsymbol{X}_{D_n}, \bar \BFtheta^{(n)}\big) -Y_{D_n}}\right) < \mathcal{L}\left((1-\lambda) t_n \right) \right) \leq 1-\varepsilon  < \beta$, which implies
    \begin{equation}
    \label{eq:thm:decom_3}
    \begin{aligned}
         \mbox{CVaR}_\beta\left(\mathcal{L}\left(\,\big|{f\big(\boldsymbol{X}_{D_n},  \bar \BFtheta^{(n)}\big) -Y_{D_n}}\big|\right)\right)
   \geq \mathcal{L}\left((1-\lambda) t_n \right).
    \end{aligned}
    \end{equation}
    Combining \eqref{eq:thm:decom_1}-\eqref{eq:thm:decom_3}, we obtain 
    \begin{equation}
    \label{eq:thm1.1_upper1}
        \begin{aligned}
            & \mbox{In-CVaR}_{\alpha}^{\beta} \left(\mathcal{L}\left(\,\big|{f\left(\boldsymbol{X}_{D_n},  \bar\BFtheta^{(n)}\right) -Y_{D_n}}\big|\right)\right) \\ 
            & \qquad \leq  \displaystyle\frac{1-\alpha-\varepsilon}{\beta-\alpha}\mbox{CVaR}_{\alpha/(1-\varepsilon)} \left(\mathcal{L}\left(\,\abs{ \lambda t_n -Y_{D_0}}\right)\right) + \displaystyle\frac{\beta+\varepsilon-1}{\beta-\alpha}\mathcal{L}\left(\,(1-\lambda)t_n\right).
        \end{aligned}
    \end{equation}
    Let $s \triangleq 1 -  \lambda/2$. By Assumption \ref{ass:c1}, for sufficiently large $n$, we have 
    \[
        \begin{aligned}
        \mathcal{L}\left(\,\abs{ \lambda t_n -Y_{D_0}}\right)  & \leq  \displaystyle \mathcal{L}\left(\,\lambda \abs{t_n-M} + \abs{\lambda M - Y_{D_0}} \right) \\
        & \leq\displaystyle s \mathcal{L}\left( \, \frac{\lambda}{s} \abs{t_n - M}\right) + (1-s) \mathcal{L} \left(\,\frac{1}{1 -s}\abs{\lambda M - Y_{D_0}}\right) \\
        & \leq\displaystyle s \cdot \left( \frac{\lambda}{s} \right)^k \mathcal{L}\left(\abs{t_n - M}\right) + (1-s) \mathcal{L} \left(\,\frac{1}{1 -s}\abs{\lambda M - Y_{D_0}}\right)
        \end{aligned}
    \]
    and
    {
    \[
    \begin{aligned}
        \mathcal{L}\left(\,(1-\lambda)t_n\right) &\leq  \mathcal{L}\left(\,(1-\lambda) |t_n-M| + (1-\lambda) M \right)  \\
        &  \leq  s \cdot \left(\frac{1 - \lambda }{s}\right)^k \mathcal{L}\left( |t_n - M|\right) + (1-s) \mathcal{L} \left(\, \frac{1 - \lambda}{1-s}M\right).
    \end{aligned}
    \]
    }
    Together with \eqref{eq:thm1.1_upper1}, we obtain
    \[
    \begin{aligned}
        & \mbox{In-CVaR}_{\alpha}^{\beta} \left(\mathcal{L}\left(\,\abs{f\left(\boldsymbol{X}_{D_n},  \bar \BFtheta^{(n)}\right) -Y_{D_n}}\right)\right) \\
        & \qquad \leq  \left( \frac{1-\alpha-\varepsilon}{\beta-\alpha} \cdot s \cdot \left( \frac{\lambda}{s} \right)^k  + \frac{\beta+\varepsilon-1}{\beta-\alpha} \cdot s \cdot \left(\frac{1 - \lambda }{s}\right)^k \right) \mathcal{L}\left( \,\abs{t_n - M}\right) + R,
    \end{aligned}
    \]
    where 
    \[
    R \triangleq \frac{1-s}{\beta-\alpha}\left((1-\alpha-\varepsilon)\mbox{CVaR}_{\alpha/(1-\varepsilon)} \left( \mathcal{L} \left(\,\frac{\abs{\lambda M - Y_{D_0}}}{1 -s}\right)\right) + (\beta+\varepsilon-1) \mathcal{L} \left(\, \frac{1 - \lambda}{1-s}M\right)\right)
    \]
    is independent of $n$. Let $p \triangleq (\beta + \varepsilon - 1)/( \beta - \alpha)$ and $q \triangleq (1 - \alpha - \varepsilon)/(\beta - \alpha)$, then $p + q = 1$, $\lambda = p^{{1}/{(k-1)}}/(p^{1/(k-1)}+q^{1/({k-1})})$, $s > 1 - \lambda = q^{{1}/{(k-1)}}/(p^{1/(k-1)}+q^{1/({k-1})}) $, and 
    \[
    \begin{aligned}
        & \frac{1-\alpha-\varepsilon}{\beta-\alpha} \cdot s \cdot \left( \frac{\lambda}{s} \right)^k  + \frac{\beta+\varepsilon-1}{\beta-\alpha} \cdot s \cdot \left(\frac{1 - \lambda }{s}\right)^k \\
        &  \qquad = s^{1-k}\left(\displaystyle  q\left(\frac{p^{\frac{1}{k-1}}}{p^{\frac{1}{k-1}} + q^{\frac{1}{k-1}}}\right)^k + p\left(\frac{q^{\frac{1}{k-1}}}{p^{\frac{1}{k-1}} + q^{\frac{1}{k-1}}}\right)^k\right) = \displaystyle s^{1-k}pq\frac{p^{\frac{1}{k-1}} + q^{\frac{1}{k-1}} }{\left( p^{\frac{1}{k-1}} + q^{\frac{1}{k-1}} \right)^k} \\ 
        & \qquad = s^{1-k}p q\left( p^{\frac{1}{k-1}} + q^{\frac{1}{k-1}}\right)^{1-k} 
        < p = \frac{\beta + \varepsilon - 1}{\beta - \alpha}.
    \end{aligned}
    \]
    Therefore, the contradiction \eqref{eq:thm2_contra} holds for sufficiently large $n$ and thus $\varepsilon^\prime( \hat{\mathcal{S}}_{\alpha}^{\beta}, D_0, f) \leq 1-\beta$.
\Halmos\end{proof}

\subsection{Proof of Proposition \ref{prop:upper_piecewiseaffine}}
\phantomsection \label{app:upper_piecewiseaffine}

\begin{proof}{Proof.}
    For part (a), by Proposition \ref{prop:upper_beta}, it suffices to show  that when $\beta \in (1/2, 1)$, $\varepsilon^\prime( \hat{\mathcal{S}}_{\alpha}^{\beta}, D_0, f)\leq  1-\beta$. By contradiction, suppose that $\varepsilon^\prime( \hat{\mathcal{S}}_{\alpha}^{\beta}, D_0, f) > 1-\beta$, then for any $\bar \varepsilon \in \left( 1 - \beta,  \min \left\{\beta,  1 - \beta/(4(I+J-1)), 1 - \alpha,  \varepsilon^\prime( \hat{\mathcal{S}}_{\alpha}^{\beta}, D_0, f)\right\} \right)$, there exists $\varepsilon \in (1 - \beta, \bar \varepsilon)$ such that $m \triangleq \varepsilon(2(I+J-1) - 1/2)/({\beta + \varepsilon - 1 })$ is an integer. Define 
    \[
    \mathcal{K}_+ \triangleq \{0, 1, \cdots, 2I-1\}, \quad \mathcal{K}_{-} \triangleq \{6I, 6I+1, \cdots, m + 4I -1\},
    \]
    and let $G_n$ be the uniform distribution on the $m$ mass points $\{(kn\BFe_1, \zeta_k k^2 n^2): k \in \mathcal{K}_+ \cup \mathcal{K}_-\}$, where $\zeta_k \triangleq \left\{
        \begin{array}{ll}
            1 & \text{if} \, k\in\mathcal{K}_+, \\
            -1 & \text{if}\, k\in\mathcal{K}_-.
        \end{array}\right.$ Then, for $D_n = (1-\varepsilon) D_0 + \varepsilon G_n$ and any
    \[
    \hat{\BFtheta}_\alpha^\beta(D_n, f) = \left(\{(\hat{\boldsymbol{a}}_i(D_n, f), \hat{b}_i(D_n, f))\}_{i=1}^I, \{(\hat{\boldsymbol{c}}_j(D_n, f), \hat{d}_j(D_n, f))\}_{j=1}^J\right) \in \hat{\mathcal{S}}_\alpha^\beta(D_n, f),
    \]
    Lemma \ref{lem: bound} (c) implies that
    \[
    \begin{aligned}
        & \displaystyle \mbox{In-CVaR}_{\alpha}^{\beta} \left(\mathcal{L}\left(\,\abs{f(\boldsymbol{X}_{D_n},  \hat{\BFtheta}_\alpha^\beta(D_n, f) ) -Y_{D_n}}\right)\right) \\
        & \qquad \geq  \displaystyle \frac{\beta+\varepsilon-1}{\beta-\alpha} \frac{\sum_{\ell=1}^{2(I + J - 1) - 1 }\mathcal{L}_{[\ell]}(D_n,f) + \frac{1}{2}\mathcal{L}_{[2(I + J -1)]}(D_n,f)}{2(I+J-1)-1/2},
    \end{aligned}
    \]
    where $\mathcal{L}_{[\ell]}(D_n,f)$ denotes the $\ell$-th smallest elements in 
    \[
    \begin{aligned}
    & \left\{\mathcal{L}\Big(\,\big| \displaystyle \max_{i \in [I]}\{kn \hat{\boldsymbol{a}}_{i,1}(D_n, f) + \hat{b}_i(D_n, f)\} \right. \\
    & \qquad \left. - {\max_{j \in [J]}}\{kn \hat{\boldsymbol{c}}_{j,1}(D_n, f) + \hat{d}_j(D_n, f)\}  -\zeta_k k^2 n^2\big|\Big)\right\}_{k\in \mathcal{K}_+ \cup \mathcal{K}_-}.
    \end{aligned}
    \]
    We say that a piece $\{(\boldsymbol{a}_i, b_i),(\boldsymbol{c}_j, d_j)\}$ is active at $\boldsymbol{x}$ if $i \in \argmax_{i  \in  [I]}\{\boldsymbol{a}_i^\top \boldsymbol{x} + b_i\}$ and $j \in \argmax_{j \in [J]}\{\boldsymbol{c}_j^\top \boldsymbol{x} + d_j\}.$ Since at most $I+J-1$ distinct pieces can be active over the set $\{kn\BFe_1 : k \in \Tilde{\mathcal{K}}(D_n,f)\}$, where $\Tilde{\mathcal{K}}(D_n, f) \subseteq \mathcal{K}_+ \cup \mathcal{K}_-$ indexes the $2(I+J-1)$ smallest elements, we consider the following two cases.
    \begin{enumerate}[label=(\alph*)]
        \item There exists a piece $\{(\hat{\boldsymbol{a}}_i(D_n, f), \hat{b}_i(D_n, f)), (\hat{\boldsymbol{c}}_j(D_n, f), \hat{d}_j(D_n, f))\}$ active at more than two distinct points in $\{ k n \BFe_1: k \in \Tilde{\mathcal{K}}(D_n,f)\}$.
        \item Each piece is active at exactly two points in $\{ k n \BFe_1: k \in \Tilde{\mathcal{K}}(D_n,f)\}$.
    \end{enumerate}
    
    Case (a): Suppose that the piece $\{(\hat{\boldsymbol{a}}_i(D_n, f), \hat{b}_i(D_n, f)), (\hat{\boldsymbol{c}}_j(D_n, f), \hat{d}_j(D_n, f)) \}$ is active at three points $k_t n \BFe_1$ for $t = 1, 2, 3,$ where $k_t \in \Tilde{\mathcal{K}}(D_n,f)$ and $k_1 < k_2 < k_3 $. Define $\hat{u}_{i,j}(D_n, f) \triangleq \hat{\boldsymbol{a}}_{i,1}(D_n, f) - \hat{\boldsymbol{c}}_{j,1}(D_n, f)$ and $\hat{v}_{i,j}(D_n, f) \triangleq \hat{b}_i(D_n, f) - \hat{d}_j(D_n, f).$ Then, by triangle inequality,
    \[
    \begin{aligned}
        &  \sum_{t=1}^3 \abs{k_t n \hat{\boldsymbol{a}}_{i,1}(D_n, f) + \hat{b}_i(D_n, f) - (k_t n \hat{\boldsymbol{c}}_{j,1}(D_n, f) + \hat{d}_j(D_n, f)) - \zeta_{k_t} k_t^2 n^2}\\
        &  \qquad \geq  \frac{1}{2} \sum_{(s, t) \in \{(1,2), (2,3), (1,3)\}} \abs{(k_s n-k_t n) \hat{u}_{i,j}(D_n, f) - (\zeta_{k_s}k_s^2 -\zeta_{k_t}k_t^2) n^2}  \triangleq \frac{1}{2}S.
    \end{aligned}
    \]
    We consider three subcases.
    
    \noindent(a1) If $\zeta_{k_1}=\zeta_{k_2}=\zeta_{k_3} \triangleq \zeta$, then
    \[
    \begin{aligned}
        S= & \sum_{(s, t) \in \{(1,2), (2,3), (1,3)\}} \left|k_s-k_t\right| n\left|\hat{u}_{i, j}\left(D_n, f\right)-\zeta\left(k_s+k_t\right) n\right| \\
        \geq & \left|k_1-k_2\right| n\left|\hat{u}_{i, j}\left(D_n, f\right)-\zeta\left(k_1+k_2\right) n\right| +  \left|k_1-k_2\right| n\left|\hat{u}_{i, j}\left(D_n, f\right)-\zeta\left(k_1+k_3\right) n\right| \\
        \geq &  \left|k_1-k_2\right| \left|k_2-k_3\right|  n^2 \geq  n^2 .
    \end{aligned}
    \]
    (a2) If $k_1, k_2 \in \mathcal{K}_+$ and $k_3 \in \mathcal{K}_-$, then
    \[
    \begin{aligned} 
        S \geq & \left|\left(k_1 n-k_2 n\right) \hat{u}_{i, j}\left(D_n, f\right)-(k_1^2-k_2^2) n^2\right| + \left|(k_1 n-k_3 n) \hat{u}_{i, j}\left(D_n, f\right)-(k_1^2+ k_3^2) n^2\right| \\ 
        = & \left|k_1-k_2\right| n\left|\hat{u}_{i, j}\left(D_n, f\right)-(k_1 + k_2)n\right|+\left|k_1-k_3\right| n\left|\hat{u}_{i, j}\left(D_n, f\right)-\frac{k_1^2+ k_3^2}{k_1-k_3} n\right| \\ 
        \geq & \left|k_1-k_2\right|\left|k_1 + k_2 + \frac{k_1^2+ k_3^2}{k_3-k_1}\right| n^2 \geq  \left|k_1-k_2\right| n^2 \geq n^2 .
    \end{aligned}
    \]
    (a3) If $k_1 \in \mathcal{K}_+$ and $k_2, k_3 \in \mathcal{K}_-$, then 
    \[
    \begin{aligned} 
        S \geq & \left|(k_2 n-k_3 n) \hat{u}_{i, j}\left(D_n, f\right)+( k_2^2-k_3^2) n^2\right| +\left|(k_1 n-k_3 n) \hat{u}_{i, j}\left(D_n, f\right)-(k_1^2+ k_3^2) n^2\right| \\ 
        = & \left|k_2-k_3\right| n\left|\hat{u}_{i, j}\left(D_n, f\right)+(k_2 + k_3)n\right|+\left|k_1-k_3\right| n\left|\hat{u}_{i, j}\left(D_n, f\right)-\frac{k_1^2+ k_3^2}{k_1-k_3} n\right| \\ 
        \geq & \left|k_2-k_3\right|\left|\frac{k_2 k_3 - k_1(k_1 + k_2 + k_3)}{k_3-k_1}\right| n^2 \geq \left|k_2-k_3\right| n^2 \geq n^2,
    \end{aligned}
    \]
    where the third inequality holds since
    \[
    \begin{aligned}
        & k_2 k_3 - k_1(k_1 + k_2 + k_3) - (k_3 - k_1) \geq  k_2 k_3 - k_3 - (2I-1)( k_2 + k_3 +2I -2)\\
        & \qquad = (k_2 - 2I) (k_3 - (2I-1)) - (4I-2)(2I-1) \geq 4I (4I + 1) - (4I-2)(2I-1) \geq 0.
    \end{aligned}
    \]
    Therefore, in all subcases (a1)-(a3),
    \[
    \begin{aligned}
        & \sum_{t=1}^3\mathcal{L}\left(\, \abs{k_t n \hat{\boldsymbol{a}}_{i,1}(D_n, f) + \hat{b}_i(D_n, f) - (k_t n \hat{\boldsymbol{c}}_{j,1}(D_n, f) + \hat{d}_j(D_n, f)) - \zeta_{k_t} k_t^2 n^2}\right) \\
        & \qquad  \geq \mathcal{L}\left(\max_{t = 1, 2, 3}  \abs{k_t n \hat{u}_{i,j}(D_n, f) + \hat{v}_{i,j}(D_n, f)  - \zeta_{k_t} k_t^2 n^2}\right) \geq \mathcal{L}\left(\frac{1}{6} S\right) \geq \mathcal{L}\left(\frac{1}{6} n^2\right). 
    \end{aligned}
    \]
    
    Case (b): Each piece is active at exactly two points in the set $\{ k n \BFe_1: k \in \Tilde{\mathcal{K}}(D_n,f)\}$. Let $i^{(n)} \in \argmax_{i \in [I]}\{\hat{b}_i(D_n, f)\}$, $ j^{(n)} \in \argmax_{j \in [J]}\{\hat{d}_j(D_n, f)\},$ and choose $k^{(n)} \in \Tilde{\mathcal{K}}(D_n,f) \backslash \{0\}$ such that the piece 
    \[
    \{(\hat{\boldsymbol{a}}_{i^{(n)}}(D_n, f), \hat{b}_{i^{(n)}}(D_n, f)), (\hat{\boldsymbol{c}}_{j^{(n)}}(D_n, f), \hat{d}_{j^{(n)}}(D_n, f))\}
    \]
    is active at $k^{(n)} n \BFe_1$. Since $\varepsilon < \varepsilon^\prime( \hat{\mathcal{S}}_{\alpha}^{\beta}, D_0, f)$, there exists $M>0$ such that
    \[
    f(k^{(n)}\BFe_1, \hat{\BFtheta}_\alpha^\beta(D_n, f)) = \displaystyle \abs{k^{(n)} \hat{u}_{i^{(n)},j^{(n)}}(D_n, f) + \hat{v}_{i^{(n)}, j^{(n)}}(D_n, f)}\leq M
    \]   
    and
    \[
    f(\boldsymbol{0}, \hat{\BFtheta}_\alpha^\beta(D_n, f))  = \abs{\hat{v}_{i^{(n)}, j^{(n)}}(D_n, f)} \leq M.
    \]
    Then, for sufficiently large $n$, we have 
    \[
    \begin{aligned}
        & \sum_{\ell=1}^{2(I + J - 1) - 1 }\mathcal{L}_{[\ell]}(D_n,f) + \frac{1}{2}\mathcal{L}_{[2(I + J -1)]}(D_n,f ) \\
        \geq & \displaystyle\frac{1}{2}\mathcal{L}\left(\,\abs{k^{(n)} n \hat{u}_{i^{(n)},j^{(n)}}(D_n, f) + n \hat{v}_{i^{(n)}, j^{(n)}}(D_n, f) -(n-1)\hat{v}_{i^{(n)}, j^{(n)}}(D_n, f) - \zeta_{k^{(n)}}{k^{(n)}}^2 n^2}\right)\\
        \geq & \displaystyle\frac{1}  {2}\mathcal{L}\left(\,\abs{ \, \abs{\zeta_{k^{(n)}}{k^{(n)}}^2 n^2} - \abs{k^{(n)} \hat{u}_{i^{(n)},j^{(n)}}(D_n, f) + \hat{v}_{i^{(n)}, j^{(n)}}(D_n, f)}n - \abs{(n-1)\hat v_{i^{(n)},j^{(n)}}(D_n, f)} }\right)  \\
        \geq & \displaystyle\frac{1}{2}\mathcal{L}\left(\,{k^{(n)}}^2 n^2 - M n - M(n-1)\right) \geq \frac{1}{2}\mathcal{L}\left(\displaystyle \frac{1}{6} n^2\right).
    \end{aligned}
    \]
    
    Combining cases (a) and (b), we conclude that
    \[
    \displaystyle \mbox{In-CVaR}_{\alpha}^{\beta} \left(\mathcal{L}\left(\,\abs{f(\boldsymbol{X}_{D_n},  \hat{\BFtheta}_\alpha^\beta(D_n, f) ) -Y_{D_n}}\right)\right) \geq    \displaystyle \frac{\beta+\varepsilon-1}{\left(4(I+J-1)-1\right)(\beta-\alpha)} \mathcal{L}\left(\displaystyle \frac{1}{6} n^2\right).
    \]
    On the other hand, set 
    \[
    \left\{\begin{aligned}
        & \boldsymbol{a}_i^{(n)} = (4i - 3) n \BFe_1, b_i^{(n)} = -(2i -1)(2i -2) n^2,  \quad i \in [I], \\
        & \boldsymbol{c}_1^{(n)} = \boldsymbol{0},\, d_1^{(n)} = 0, \\
        & \boldsymbol{c}_j^{(n)} =  (16I + 4j -10)n\BFe_1, \\
        & d_j^{(n)} = - \left((6  I + 2  j - 3) (6  I + 2  j - 4) + (2I -1)(2I-2)\right) n^2, \quad j \in [J]\backslash\{1\}.
    \end{aligned}\right.
    \]
    Then $f\big(kn\BFe_1, \{(\boldsymbol{a}_i^{(n)},b_i^{(n)})\}_{i=1}^I, \{(\boldsymbol{c}_j^{(n)},d_j^{(n)})\}_{j=1}^J\big) = \zeta_{k}k^2 n^2$ for all $k \in \{0, \cdots, 2I-1, 6I, \cdots, 6I + 2J - 3\}$ and thus
    \[
    \mathbb{P} \left(\mathcal{L}\left(\,\abs{f\big(\boldsymbol{X}_{G_n}, \{(\boldsymbol{a}_i^{(n)},b_i^{(n)})\}_{i=1}^I, \{(\boldsymbol{c}_j^{(n)},d_j^{(n)})\}_{j=1}^J\big) - Y_{G_n} }\right) = 0 \right) \geq \frac{2(I + J -1)}{m}.
    \]
    Figure \ref{fig:piecewise} illustrates the interpolation for $I = J = 2$ and $n = 1$.
\begin{figure}[h]
     \FIGURE
    {
    \subcaptionbox{Full plot of $f(x, \BFtheta^{(1)})$.}{\includegraphics[width=0.33\linewidth]{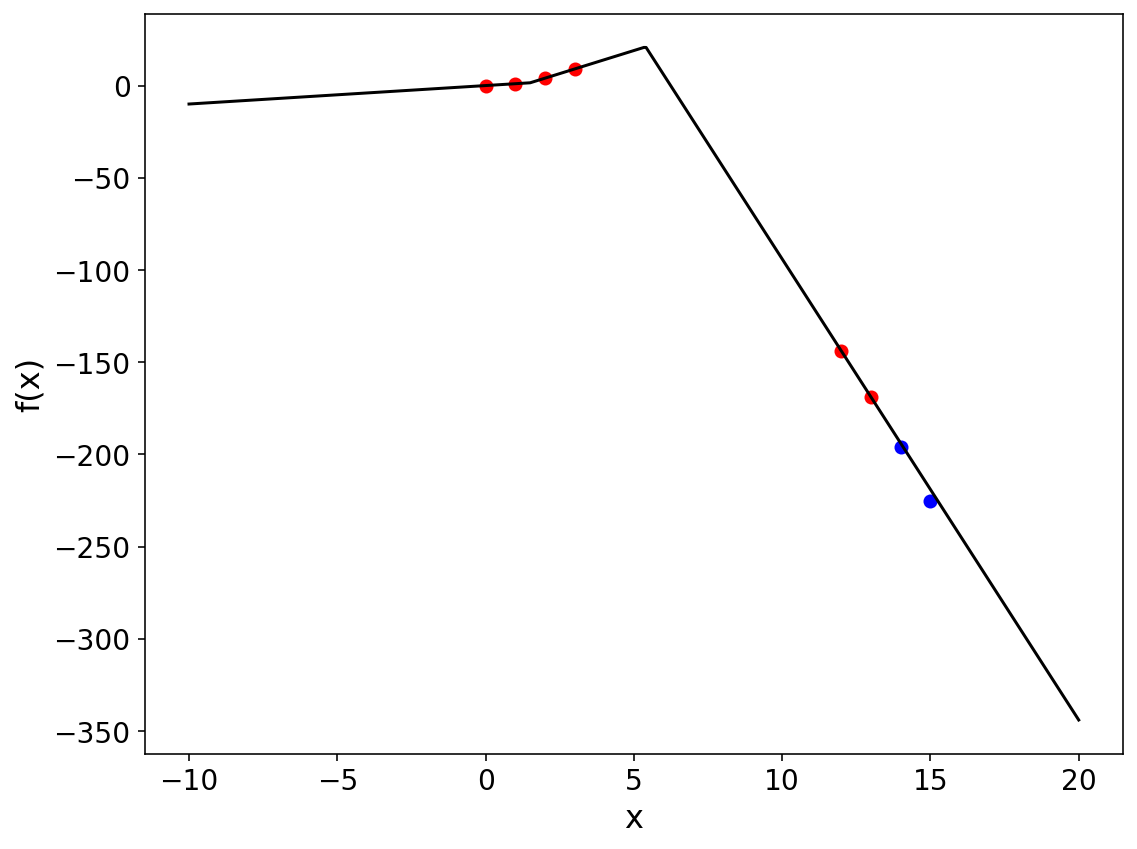}}
    \hfill
    \subcaptionbox{$f(x, \BFtheta^{(1)})$ with the range $[-1, 4]$.}{\includegraphics[width=0.33\linewidth]{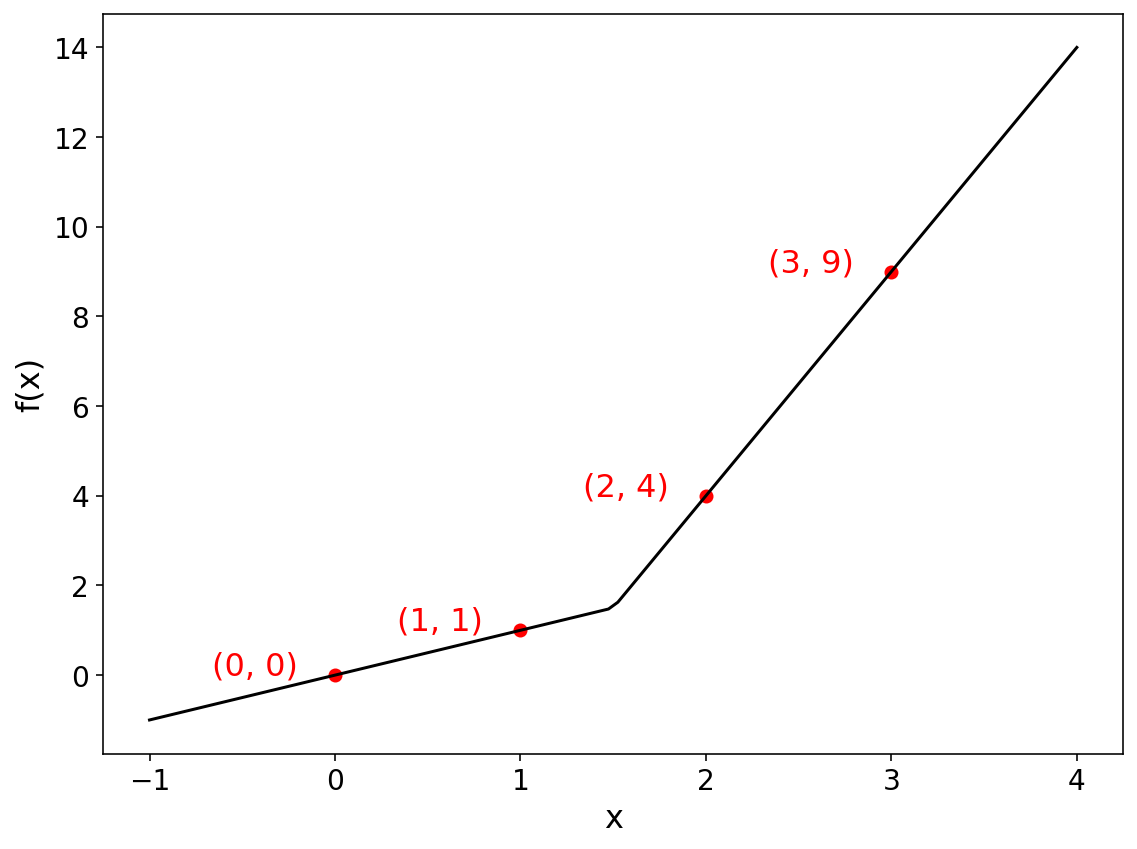}}
    \hfill
    \subcaptionbox{$f(x, \BFtheta^{(1)})$ with the range $[11, 16]$.}{\includegraphics[width=0.33\linewidth]{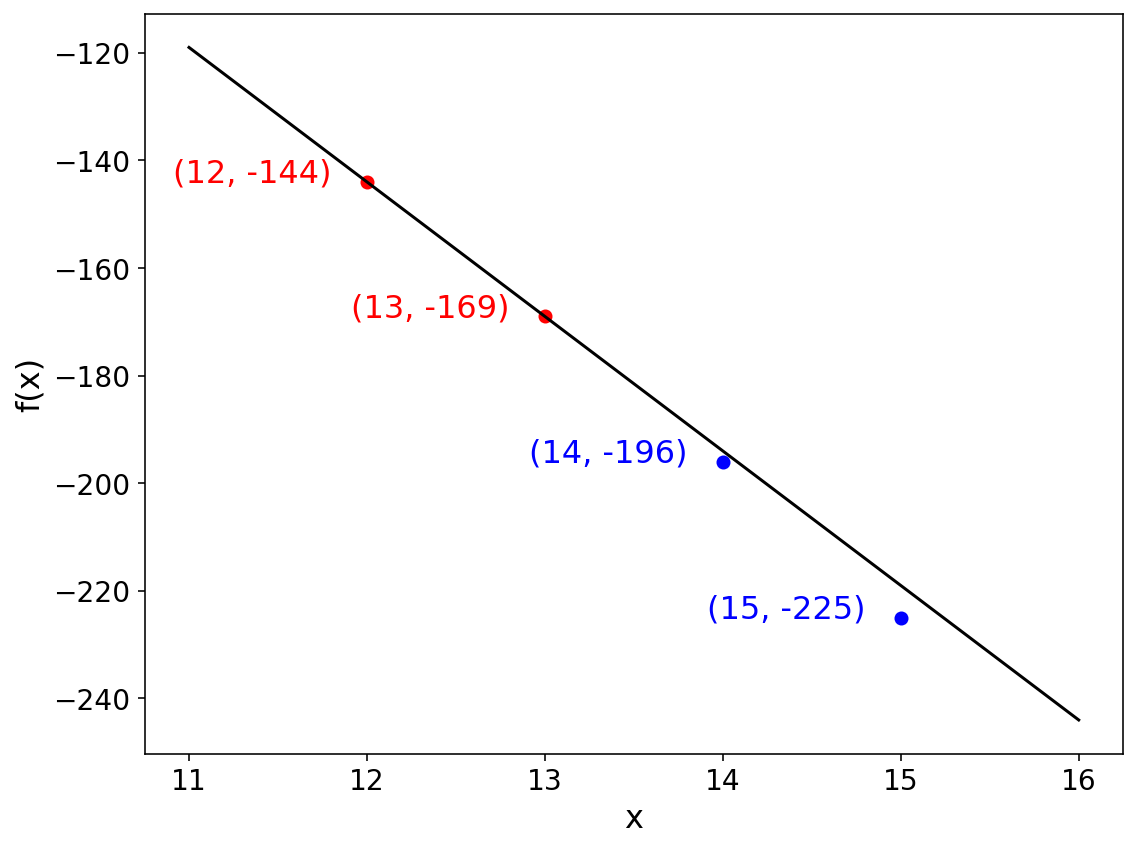}}
    }
    {Construction in the proof of Proposition \ref{prop:upper_piecewiseaffine} for $I = J = 2$, $p = 1$ and $n=1$. \label{fig:piecewise}}
    {$f(x, \BFtheta^{(1)}) = \max\{x, 5x-6\} - \max\{0, 30x - 162\}$, which interpolates the red points.}
\end{figure}
    By Lemma \ref{lem: bound} (b), we have
    \begin{equation}
    \label{eq:prop4_contra}
    \begin{aligned}
        & \displaystyle \mbox{In-CVaR}_{\alpha}^{\beta} \left(\mathcal{L}\left(\,\abs{f\big(\boldsymbol{X}_{D_n}, \{(\boldsymbol{a}_i^{(n)},b_i^{(n)})\}_{i=1}^I,\{(\boldsymbol{c}_j^{(n)},d_j^{(n)})\}_{j=1}^J\big) - Y_{D_n} }\right)\right) \\
        & \qquad \leq \mbox{VaR}_{ \left(\beta - 2 (I + J -1) \varepsilon/m \right)/(1 - \varepsilon) }\left(\mathcal{L}\left(\,\abs{f\big(\boldsymbol{X}_{D_0}, \{(\boldsymbol{a}_i^{(n)},b_i^{(n)})\}_{i=1}^I, \{(\boldsymbol{c}_j^{(n)},d_j^{(n)})\}_{j=1}^J\big) - Y_{D_0} }\right)\right).
    \end{aligned}
    \end{equation}
    For the nominal distribution $D_0$, there exists $T > 0$ such that
    \[
    \mathbb{P}\left( \abs{\boldsymbol{X}_{D_0}} \leq T,\left|Y_{D_0}\right| \leq T\right)  \geq   \displaystyle \frac{\beta - 2 (I + J -1) \varepsilon/m}{1-\varepsilon}. 
    \]
    For sufficiently large $n$ and $\|\boldsymbol{x}\| \leq T$,
    \[
    \begin{aligned}
         & f\left(\boldsymbol{x}, \{(\boldsymbol{a}_i^{(n)},b_i^{(n)})\}_{i=1}^I, \{(\boldsymbol{c}_j^{(n)},d_j^{(n)})\}_{j=1}^J\right) \\
         & \qquad = \max\left\{nx_1, 5n x_1 -6n^2, \cdots, (4I-3) n x_1 - (2I-1)(2I-2) n^2\right\} \\
         & \qquad \quad - \max\left\{0, (16I - 2) n x_1 - (40 I^2 + 2) n^2, \cdots,  (16I + 4J -10)nx_1\right.\\
         & \qquad \quad \left.- \left((6  I + 2  J - 3) (6  I + 2 J - 4) + (2I -1)(2I-2)\right) n^2\right\}= nx_1.
    \end{aligned}
    \]
    Therefore, for large $n$, by Assumption \ref{ass:c2} we have
    \[
    \begin{aligned}
        & \mbox{VaR}_{ \left(\beta - 2 (I + J -1) \varepsilon/m \right)/(1 - \varepsilon) }\left(\mathcal{L}\left(\,\abs{f(\boldsymbol{X}_{D_0}, \{(\boldsymbol{a}_i^{(n)},b_i^{(n)})\}_{i=1}^I, \{(\boldsymbol{c}_j^{(n)},d_j^{(n)})\}_{j=1}^J) - Y_{D_0} }\right)\right) \\
        & \qquad \leq  \mathcal{L}(nT + T) < \frac{\beta+\varepsilon-1}{\left(4(I+J-1)-1\right)(\beta-\alpha)} \mathcal{L}\left(\displaystyle \frac{1}{6} n^2\right)\\
        & \qquad \leq \mbox{In-CVaR}_{\alpha}^{\beta} \left(\mathcal{L}\left(\,\abs{f(\boldsymbol{X}_{D_n},  \hat{\BFtheta}_\alpha^\beta(D_n, f) ) -Y_{D_n}}\right)\right),
    \end{aligned}
    \]
    which creates a contradiction to \eqref{eq:prop4_contra}. If $\beta = 1$ and $D_0$ has bounded support, the same argument applies by setting $m = 2(I+J-1)$.
\end{proof}

\subsection{Proof of Theorem \ref{thm:consistency}}
\phantomsection \label{app:thm:consistency}
\begin{proof}{Proof.}
    For any point $\BFtheta \in C$ and any sequence $\{\BFtheta_k\}_{k=1}^\infty \subseteq C$ with $\BFtheta_k \to \BFtheta$, we have 
    \begin{equation}
    \label{eq:consistency1}
    \begin{aligned}
        & \abs{\displaystyle  \mbox{In-CVaR}_\alpha^\beta \left(\mathcal{L}(\,\abs{f(\boldsymbol{X}_D, \BFtheta_k) - Y_D})\right)-\mbox{In-CVaR}_\alpha^\beta \left(\mathcal{L}(\,\abs{f(\boldsymbol{X}_D, \BFtheta) - Y_D})\right) }\\
        & =    \displaystyle\frac{1}{\beta - \alpha} \left|(1-\alpha) \mbox{CVaR}_\alpha \left(\mathcal{L}(\,\abs{f(\boldsymbol{X}_{D}, \BFtheta_k) - Y_{D}})\right) - (1-\beta) \mbox{CVaR}_\beta \left(\mathcal{L}(\,\abs{f(\boldsymbol{X}_{D}, \BFtheta_k) - Y_{D}})\right) \right.  \\
        &\quad \left.-(1-\alpha) \mbox{CVaR}_\alpha \left(\mathcal{L}(\,\abs{f(\boldsymbol{X}_{D}, \BFtheta) - Y_{D}})\right)+ (1-\beta) \mbox{CVaR}_\beta \left(\mathcal{L}(\,\abs{f(\boldsymbol{X}_{D}, \BFtheta) - Y_{D}})\right)\right|\\
        & \leq \displaystyle\frac{1 - \alpha}{\beta - \alpha} \abs{\mbox{CVaR}_\alpha \left(\mathcal{L}(\,\abs{f(\boldsymbol{X}_{D}, \BFtheta_k) - Y_{D}})\right) - \mbox{CVaR}_\alpha \left(\mathcal{L}(\,\abs{f(\boldsymbol{X}_{D}, \BFtheta) - Y_{D}})\right)}\\
        & \quad + \displaystyle\frac{1 - \beta}{\beta - \alpha} \abs{\mbox{CVaR}_\beta \left(\mathcal{L}(\,\abs{f(\boldsymbol{X}_{D}, \BFtheta_k) - Y_{D}})\right) - \mbox{CVaR}_\beta \left(\mathcal{L}(\,\abs{f(\boldsymbol{X}_{D}, \BFtheta) - Y_{D}})\right)}\\
        & \leq \frac{2}{\beta-\alpha} \mathbb{E} \left[\,\abs{\mathcal{L}(\,\abs{f(\boldsymbol{X}_D, \BFtheta_k) - Y_D}) - \mathcal{L}(\,\abs{f(\boldsymbol{X}_D, \BFtheta) - Y_D})}\right],  
    \end{aligned}
    \end{equation}
    where the last inequality follows from Theorem 3 of \cite{rockafellar2014random}.
    Under Assumptions \ref{ass:a5} and \ref{ass:a6}, 
    by the Lebesgue dominated convergence theorem, 
    \[
    \displaystyle\lim_{k\rightarrow \infty} \mathbb{E}\left[\,\abs{\mathcal{L}(\,\abs{f(\boldsymbol{X}_D, \BFtheta_k) - Y_D}) - \mathcal{L}(\,\abs{f(\boldsymbol{X}_D, \BFtheta) - Y_D})}\right] = 0,
    \]
    which implies $\mbox{In-CVaR}_\alpha^\beta \left(\mathcal{L}(\,\abs{f(\boldsymbol{X}_D, \BFtheta_k) - Y_D})\right) \rightarrow \mbox{In-CVaR}_\alpha^\beta \left(\mathcal{L}(\,\abs{f(\boldsymbol{X}_D, \BFtheta) - Y_D})\right)$. 
    
    For part (b), fix $\bar{\BFtheta} \in C$ and any sequence $\{\gamma_k\}_{k = 1}^ \infty$ of positive numbers converging to zero. Let $V_k \triangleq C \cap \mathcal{B}(\bar \BFtheta, \gamma_k)$ and 
    \[  \Delta_k(\boldsymbol{x},y)\triangleq\sup _{\BFtheta \in V_k}\abs{\mathcal{L}(\,\abs{f(\boldsymbol{x}, \BFtheta) - y}) - \mathcal{L}(\,\abs{f(\boldsymbol{x}, \bar \BFtheta) - y})}.
    \]
    By Assumption \ref{ass:a5}, for any $(\boldsymbol{x},y) \in \mathcal{X} \times \mathcal{Y},$ $\Delta_k(\boldsymbol{x}, y)$ tends to zero as $k \rightarrow \infty$. Moreover, by Assumption \ref{ass:a6}, $\left\{\Delta_k(\boldsymbol{X}_D, Y_D)\right\}_{k=1}^\infty$ are dominated by an integrable function. Therefore, by the Lebesgue dominated convergence theorem,
    \[
    \lim_{k \rightarrow \infty} \mathbb{E}\left[\Delta_k(\boldsymbol{X}_D, Y_D)\right] = \mathbb{E}\left[\lim_{k \rightarrow \infty}\Delta_k(\boldsymbol{X}_D, Y_D)\right] = 0.
    \]
    Moreover, following the same argument as in \eqref{eq:consistency1}, we have
    \[
    \begin{aligned}
        & \sup_{\BFtheta \in V_k}\left|\mbox{In-CVaR}_\alpha^\beta \left(\mathcal{L}\left(\,\abs{f(\boldsymbol{X}_{D^{(n)}}, \BFtheta) - Y_{D^{(n)}}}\right)\right) - \mbox{In-CVaR}_\alpha^\beta \left(\mathcal{L}\left(\,\abs{f(\boldsymbol{X}_{D^{(n)}}, \bar \BFtheta) - Y_{D^{(n)}}}\right)\right)\right|\\
        &\qquad \leq   \frac{2}{(\beta-\alpha)n} \sup_{\BFtheta \in V_k} \sum_{i=1}^n \abs{\mathcal{L}\left(\,\big|{f(\boldsymbol{x}_{D}^{(i)}, \BFtheta) - y_{D}^{(i)}}\big|\right) - \mathcal{L}\left(\,\big|{f(\boldsymbol{x}_{D}^{(i)}, \bar \BFtheta) - y_{D}^{(i)}}\big|\right)}\\
        & \qquad \leq  \frac{2}{(\beta-\alpha)n} \sum_{i=1}^n \Delta_{k}(\boldsymbol{x}_{D}^{(i)}, y_{D}^{(i)}).      
    \end{aligned}
    \]
    By Assumption \ref{ass:a7} and the strong law of large numbers, we have $\sum_{i=1}^n \Delta_{k}(\boldsymbol{x}_{D}^{(i)}, y_{D}^{(i)})/n$ converges w.p.1 to $\mathbb{E}[\Delta_k(\boldsymbol{X}_D, Y_D)]$ as $n \rightarrow \infty$. Therefore, for any $\varepsilon > 0$, there exist a neighborhood $W$ of $\bar\BFtheta$ and  $\bar n_W \in \mathbb{N}_+$ such that w.p.1 for all $n \geq \bar n_W$, 
    \begin{equation}
    \label{eq:consis1}
        \sup_{\BFtheta \in W \cap C} \left|\mbox{In-CVaR}_\alpha^\beta \left(\mathcal{L}\left(\,\abs{f(\boldsymbol{X}_{D^{(n)}}, \BFtheta) - Y_{D^{(n)}}}\right)\right) - \mbox{In-CVaR}_\alpha^\beta \left(\mathcal{L}\left(\,\abs{f(\boldsymbol{X}_{D^{(n)}}, \bar \BFtheta) - Y_{D^{(n)}}}\right)\right)\right| < \varepsilon.
    \end{equation}
    Since $C$ is compact, there exist finitely many points  $\BFtheta_1, \cdots, \BFtheta_m \in C$ and corresponding neighborhoods $W_1, \cdots, W_m$ covering $C$ such that w.p.1 for all $n \geq \bar n\triangleq \max\{\bar n_{W_1}, \cdots, \bar n_{W_m}\}$, 
    \begin{equation}
    \label{eq:consis2}
        \sup_{\BFtheta\in W_j\cap C}  \left|\displaystyle  \mbox{In-CVaR}_\alpha^\beta \left(\mathcal{L}\left(\,\abs{f(\boldsymbol{X}_{D^{(n)}}, \BFtheta) - Y_{D^{(n)}}}\right)\right) -  \mbox{In-CVaR}_\alpha^\beta \left(\mathcal{L}\left(\,\abs{f(\boldsymbol{X}_{D^{(n)}}, \BFtheta_j) - Y_{D^{(n)}}}\right)\right)  \right| < \varepsilon.
    \end{equation}
    Since $\mbox{In-CVaR}_\alpha^\beta \left(\mathcal{L}(\,\abs{f(\boldsymbol{X}_D, \bullet) - Y_D})\right)$ is continuous on $C$, these neighborhoods can be chosen such that
    \[
    \displaystyle\sup_{\BFtheta\in W_j\cap C}  \abs{\displaystyle  \mbox{In-CVaR}_\alpha^\beta \left(\mathcal{L}(\,\abs{f(\boldsymbol{X}_D, \BFtheta) - Y_D})\right)- \displaystyle  \mbox{In-CVaR}_\alpha^\beta \left(\mathcal{L}(\,\abs{f(\boldsymbol{X}_D, \BFtheta_j) - Y_D})\right)  } < \varepsilon.
    \]
    By Example 2.11 of \cite{cont2010robustness}, there exists $n^*$ such that w.p.1 for each $\BFtheta_j$ and $n\geq n^*$, 
    \begin{equation}
    \label{eq:consis3}
    \displaystyle \abs{\displaystyle  \mbox{In-CVaR}_\alpha^\beta \left(\mathcal{L}\left(\,\abs{f(\boldsymbol{X}_{D^{(n)}}, \BFtheta_j) - Y_{D^{(n)}}}\right)\right)- \displaystyle  \mbox{In-CVaR}_\alpha^\beta \left(\mathcal{L}(\,\abs{f(\boldsymbol{X}_D, \BFtheta_j) - Y_D})\right)  } < \varepsilon.
    \end{equation}
    Combining \eqref{eq:consis1}-\eqref{eq:consis3}, we conclude that w.p.1 for all $n \geq \max\{\bar{n}, n^*\}$,
    \[
    \begin{aligned}
    & \displaystyle\sup_{\BFtheta\in C}  \Bigg|\displaystyle  \mbox{In-CVaR}_\alpha^\beta \left(\mathcal{L}\left(\,\abs{f(\boldsymbol{X}_{D^{(n)}}, \BFtheta) - Y_{D^{(n)}}}\right)\right)  -   \mbox{In-CVaR}_\alpha^\beta \left(\mathcal{L}(\,\abs{f(\boldsymbol{X}_D, \BFtheta) - Y_D})\right)  \Bigg| < 3 \varepsilon.
    \end{aligned}
    \]
    Since $\varepsilon > 0 $ is arbitrary, statement (b) follows. Statement (c) then follows from Proposition 5.2 and Theorem 5.3 of \cite{shapiro2021lectures}.   
\Halmos\end{proof}

\subsection{Proof of Proposition \ref{prop:compactness_neighbor}}
\phantomsection \label{app:prop:compactness_neighbor}
\begin{proof}{Proof.}
    Since Assumption \ref{ass:d} holds for $\bar \delta$,  there exists $\bar \eta > 0$ such that
    \[
    \sup_{\{\BFtheta \in \Theta: \Vert \BFtheta \Vert = 1\} }\mathbb{P}\left(\inf_{\boldsymbol{x} \in \mathcal{B}(\boldsymbol{X}_{D_0}, \bar \delta)}\abs{f(\boldsymbol{x}, \BFtheta)} < \bar \eta \right) < \frac{ \beta- \bar \delta}{1 - \bar \delta}.
    \]
    By Assumption \ref{ass:b1}, for each $\BFtheta \in \Theta$, $\mathbb{P}\left(\mathcal{B}(\boldsymbol{X}_{D_0}, \bar \delta) \subseteq \mathcal{X}(\BFtheta)\right) > (1 - \beta)/(1 - \bar \delta)$, where $\mathcal{X}(\BFtheta) \triangleq \left\{\boldsymbol{x} \in \mathcal{X} : \abs{f(\boldsymbol{x}, \BFtheta)} \geq  \bar \eta \Vert \BFtheta \Vert\right\}$. Fix $\tau \in \left(0, (1-\bar \delta)\mathbb{P}\left(\mathcal{B}(\boldsymbol{X}_{D_0}, \bar \delta) \subseteq \mathcal{X}(\BFtheta)\right) - (1-\beta)\right)$, there exists $T>0$ such that $\mathbb{P}\left(\,\abs{Y_{D_0}}\leq T -\bar  \delta\right) \geq 1 -{\tau}/{2}.$ By Strassen’s theorem \cite[Theorem 2.13]{huber2009robust}, for any $D\in \mathcal{B}_{\text{Prok}}(D_0,\bar \delta)$, there exists a coupling of $\boldsymbol{Z}_D\sim D$ and $\boldsymbol{Z}_{D_0}\sim D_0$ so that $\mathbb{P}(\Vert \boldsymbol{Z}_{D} - \boldsymbol{Z}_{D_0}\Vert \leq \bar \delta) \geq 1 -\bar \delta$, which yields  
    \[
    \begin{aligned}
        & \mathbb{P}\left(\boldsymbol{X}_D \in \mathcal{X}(\BFtheta), \,\abs{Y_D} \leq T\right) \\
        &\qquad \geq  \mathbb{P}\left(\boldsymbol{X}_D \in \mathcal{X}(\BFtheta), \,\abs{Y_D} \leq T \mid \Vert \boldsymbol{Z}_{D} - \boldsymbol{Z}_{D_0}\Vert \leq \bar \delta \right)\mathbb{P}(\Vert \boldsymbol{Z}_{D} - \boldsymbol{Z}_{D_0}\Vert \leq \bar \delta) \\
        &\qquad\geq  ( 1- \bar \delta )\mathbb{P}\left(\mathcal{B}(\boldsymbol{X}_{D_0}, \bar \delta) \subseteq \mathcal{X}(\BFtheta), \,\abs{Y_{D_0}} \leq T - \bar \delta \mid \Vert \boldsymbol{Z}_{D} - \boldsymbol{Z}_{D_0}\Vert \leq \bar \delta \right)\\
        &\qquad\geq  \displaystyle (1- \bar \delta) \,   \mathbb{P} \left( \mathcal{B}(\boldsymbol{X}_{D_0}, \bar \delta) \subseteq \mathcal{X}(\BFtheta)\right)  - \frac{\tau}{2}.
    \end{aligned}
    \]
    Then, for sufficiently small $\tau$, 
    \[
    \begin{aligned} 
        & \displaystyle \mathbb{P} \left(\,\mathcal{L}\left(\,\abs{f(\boldsymbol{X}_D, \BFtheta)-Y_D}\right)\leq \mbox{VaR}_{\beta - \frac{\tau}{2} }\left( \, \mathcal{L}\left(\,\abs{f(\boldsymbol{X}_D, \BFtheta)-Y_D}\right)\right), \boldsymbol{X}_D \in \mathcal{X}( \BFtheta), \, \abs{Y_D} \leq T \right) \\
        & \qquad \geq \beta -\frac{\tau}{2} + \mathbb{P}\left(\boldsymbol{X}_D \in \mathcal{X}(\BFtheta) , \, \abs{Y_D} \leq T \right) - 1\\
        &\qquad \geq   \displaystyle \beta -\frac{\tau}{2}  + ( 1- \bar \delta )\,\mathbb{P} \left( \mathcal{B}(\boldsymbol{X}_{D_0}, \bar \delta) \subseteq \mathcal{X}(\BFtheta) \right) - \frac{\tau}{2} -1 >0.
    \end{aligned}
    \]
    Therefore, 
    \begin{equation}
    \label{eq:stability(a)1}
    \begin{aligned}
        & \lim _{M \rightarrow \infty} \inf_{D \in \mathcal{B}_{\text{Prok}}(D_0, \bar \delta)}\inf _{\{\BFtheta\in\Theta :\Vert \BFtheta \Vert >M\}}\left\{\mbox{VaR}_{\beta- \frac{\tau}{2}}\left( \,\mathcal{L}\left(\,\abs{f(\boldsymbol{X}_D, \BFtheta)-Y_D}\right) \right)\right\} \\
        & \qquad \geq \lim _{M \rightarrow \infty} \inf _{\{\BFtheta\in\Theta :\Vert \BFtheta \Vert  >M\}}\left\{\inf _{ {\boldsymbol{x}} \in \mathcal{X}( \BFtheta), \, \abs{y} \leq T } \,\mathcal{L}\left(\,\abs{f(\boldsymbol{x}, \BFtheta)-y}\right) \right\}\\
        & \qquad \geq \lim _{M \rightarrow \infty} \inf _{\{\BFtheta\in\Theta :\Vert \BFtheta \Vert  >M\}}\left\{\mathcal{L}\left(\, \bar\eta\|\BFtheta \| - T\right) \right\}=\infty.
    \end{aligned}
    \end{equation}
    On the other hand, fix any $\BFtheta \in \Theta$, 
    \[
    \begin{aligned}
        & \mathbb{P}\left(\,\mathcal{L}\left(\,\abs{f(\boldsymbol{X}_D, \BFtheta)-Y_D}\right) \leq t\right) \\
        &\qquad\geq  \mathbb{P}\left(\,\mathcal{L}\left(\,\abs{f(\boldsymbol{X}_D, \BFtheta)-Y_D}\right) \leq t \, \big| \, \Vert \boldsymbol{Z}_D - \boldsymbol{Z}_{D_0}\Vert \leq \bar \delta \right)\mathbb{P}(\Vert \boldsymbol{Z}_D - \boldsymbol{Z}_{D_0}\Vert \leq \bar \delta)\\
        &\qquad \geq   (1-\bar \delta) \mathbb{P}\left(\,\sup_{\boldsymbol{z} \in \mathcal{B}(\boldsymbol{Z}_{D_0}, \bar \delta)}\mathcal{L}\left(\,\abs{f(\boldsymbol{x}, \BFtheta)-y}\right) \leq t  \right),
    \end{aligned}
    \]
    which implies 
    \[
    \mbox{VaR}_{\beta}\left( \,\mathcal{L}\left(\,\abs{f(\boldsymbol{X}_D, \BFtheta)-Y_D}\right) \right) \leq \mbox{VaR}_{\beta/(1- \bar \delta)}\left( \,\sup_{\boldsymbol{z} \in \mathcal{B}(\boldsymbol{Z}_{D_0}, \bar \delta)}\mathcal{L}\left(\,\abs{f(\boldsymbol{x}, \BFtheta)-y}\right) \right).
    \]
    Since $\mathbb{P}(\Vert \boldsymbol{Z}_{D_0} \Vert \leq T^\prime - \bar \delta) \geq  \beta/(1- \bar \delta)$ for some $T^\prime > 0$, we have 
    \[
    \mathbb{P}\left(\,\sup_{\boldsymbol{z} \in \mathcal{B}(\boldsymbol{Z}_{D_0}, \bar \delta)}\mathcal{L}\left(\,\abs{f(\boldsymbol{x}, \BFtheta)-y}\right) \leq \sup_{\{\boldsymbol{z}\in \mathcal{Z}: \Vert \boldsymbol{z}\Vert \leq T^\prime \}}\mathcal{L}\left(\,\abs{f(\boldsymbol{x}, \BFtheta)-y}\right)  \right) \geq \frac{\beta}{1- \bar \delta},
    \]
    and therefore, 
    \[
    \begin{aligned}
        & \sup_{D\in \mathcal{B}_{\text{Prok}}(D_0, \bar \delta)} \inf_{\BFtheta \in \Theta}\mbox{In-CVaR}_\alpha^{\beta}\left( \,\mathcal{L}\left(\,\abs{f(\boldsymbol{X}_D, \BFtheta)-Y_D}\right) \right) \\
        & \qquad \leq \sup_{D\in \mathcal{B}_{\text{Prok}}(D_0, \bar \delta)} \inf_{\BFtheta \in \Theta}\mbox{VaR}_{\beta}\left( \,\mathcal{L}\left(\,\abs{f(\boldsymbol{X}_D, \BFtheta)-Y_D}\right) \right) \\
        & \qquad \leq \sup_{\{\boldsymbol{z}\in \mathcal{Z}: \Vert \boldsymbol{z}\Vert \leq T^\prime \}} \inf_{\BFtheta \in \Theta}\mathcal{L}\left(\,\abs{f(\boldsymbol{x}, \BFtheta)-y}\right) < \infty.
    \end{aligned}
    \]
    Together with \eqref{eq:stability(a)1} and Assumption \ref{ass:a3}, there exists a compact set $C \subseteq \Theta$ such that if $d_{\text{Prok}} (D, D_0) \leq \bar \delta$, then $\hat{\mathcal{S}}_\alpha^\beta(D,f) \subseteq C$.  
\Halmos\end{proof}

\subsection{Proof of Theorem \ref{thm:stability}}
\phantomsection \label{app:thm_stability}

\begin{proof}{Proof.}
    Fix $\bar{\BFtheta} \in C$ and let $\{\gamma_k\}_{k=1}^\infty$ be a sequence of positive numbers converging to zero. For each $k$, define $V_k \triangleq C \cap \mathcal{B}(\bar \BFtheta, \gamma_k).$ By the assumed local Lipschitz condition, for any $\BFtheta \in V_k$,
    \begin{equation}
    \label{eq:sta1}
    \begin{aligned}
        &  \displaystyle  \mbox{In-CVaR}_\alpha^\beta \left(\mathcal{L}(\,\abs{f(\boldsymbol{X}_{D}, \BFtheta) - Y_{D}})\right)- \displaystyle  \mbox{In-CVaR}_\alpha^\beta \left(\mathcal{L}\left (\,\abs{f(\boldsymbol{X}_{D}, \bar\BFtheta) - Y_{D}}\right)\right)  \\
        \leq & \displaystyle  \mbox{In-CVaR}_\alpha^\beta \left(\mathcal{L}\left(\,\abs{f(\boldsymbol{X}_{D}, \bar \BFtheta) - Y_{D}}\right) + \phi_{\bar\BFtheta}(\boldsymbol{X}_D, Y_D) \gamma_k \right)- \displaystyle  \mbox{In-CVaR}_\alpha^\beta \left(\mathcal{L}\left(\,\abs{f(\boldsymbol{X}_{D}, \bar\BFtheta) - Y_{D}}\right)\right).
    \end{aligned}
    \end{equation}
    Since $d_{\text{Prok}}(D, D_0) \rightarrow 0$, by Theorem 2.14 of \cite{huber2009robust}, $D$ converges weakly to $D_0$. By the continuous mapping theorem and Lemma~\ref{lem:continuity_incvar}, the right-hand side of \eqref{eq:sta1} converges to
    \[
       \displaystyle  \mbox{In-CVaR}_\alpha^\beta \left(\mathcal{L}\left(\,\abs{f(\boldsymbol{X}_{D_0}, \bar \BFtheta) - Y_{D_0}}\right) + \phi_{\bar\BFtheta}(\boldsymbol{X}_{D_0}, Y_{D_0}) \gamma_k \right)- \displaystyle  \mbox{In-CVaR}_\alpha^\beta \left(\mathcal{L}\left(\,\abs{f(\boldsymbol{X}_{D_0}, \bar\BFtheta) - Y_{D_0}}\right)\right),
    \]
    which tends to zero as $\gamma_k \rightarrow 0$. A symmetric argument with $-\phi_{\bar\BFtheta}(\boldsymbol{X}_D,Y_D)\gamma_k$ yields the lower bound. Hence, for any $\varepsilon>0$, there exists a neighborhood $W$ of $\bar\BFtheta$ and $\tilde\delta_W>0$ such that
    \[
    \sup_{\BFtheta\in W\cap C}  \abs{\displaystyle  \mbox{In-CVaR}_\alpha^\beta \left(\mathcal{L}(\,\abs{f(\boldsymbol{X}_{D}, \BFtheta) - Y_{D}})\right)- \displaystyle  \mbox{In-CVaR}_\alpha^\beta \left(\mathcal{L}\left(\,\abs{f(\boldsymbol{X}_{D}, \bar\BFtheta) - Y_{D}}\right)\right)  } < \varepsilon
    \]
    for all $D \in \mathcal{B}_{\text{Prok}}(D_0, \tilde\delta_W)$. Since $C$ is compact, there exist finitely many points $\BFtheta_1, \cdots, \BFtheta_m \in C$ and corresponding neighborhoods $W_1, \cdots, W_m$ covering $C$ such that  for any $D \in \mathcal{B}_{\text{Prok}}( D_0, \tilde\delta)$, where $ \tilde\delta \triangleq \min\{\tilde\delta_{W_1}, \cdots, \tilde\delta_{W_m}\}$, we have
    \[
    \sup_{\BFtheta\in W_j\cap C}  \abs{\displaystyle  \mbox{In-CVaR}_\alpha^\beta \left(\mathcal{L}(\,\abs{f(\boldsymbol{X}_D, \BFtheta) - Y_D})\right)- \displaystyle  \mbox{In-CVaR}_\alpha^\beta \left(\mathcal{L}\left(\,\abs{f(\boldsymbol{X}_D, \BFtheta_j) - Y_{D}}\right)\right)  } < \varepsilon.
    \]
    By Theorem \ref{thm:consistency} (a),  $\mbox{In-CVaR}_\alpha^\beta \left(\mathcal{L}\left(\,\abs{f(\boldsymbol{X}_{D_0}, \bullet) - Y_{D_0}}\right)\right)$ is continuous on $C$, so these neighborhoods can be chosen such that
    \[
    \displaystyle\sup_{\BFtheta\in W_j\cap C}  \abs{\displaystyle  \mbox{In-CVaR}_\alpha^\beta \left(\mathcal{L}\left(\,\abs{f(\boldsymbol{X}_{D_0}, \BFtheta) - Y_{D_0}}\right)\right)- \displaystyle  \mbox{In-CVaR}_\alpha^\beta \left(\mathcal{L}\left(\,\abs{f(\boldsymbol{X}_{D_0}, \BFtheta_j) - Y_{D_0}}\right)\right)  } < \varepsilon.
    \]
    According to Lemma \ref{lem:continuity_incvar}, there exists $\delta^* >0 $ such that
    \[
    \displaystyle \abs{\displaystyle  \mbox{In-CVaR}_\alpha^\beta \left(\mathcal{L}\left(\,\abs{f(\boldsymbol{X}_{D}, \BFtheta_j) - Y_{D}}\right)\right)- \displaystyle  \mbox{In-CVaR}_\alpha^\beta \left(\mathcal{L}\left(\,\abs{f(\boldsymbol{X}_{D_0}, \BFtheta_j) - Y_{D_0}}\right)\right)  } < \varepsilon
    \]
    for all $ D\in \mathcal{B}_{\text{Prok}}(D_0, \delta^*)$.  Therefore, for any distribution $D \in \mathcal{B}_{\text{Prok}}(D_0, \min\{\tilde\delta, \delta^*\})$, we have 
    \[
    \displaystyle\sup_{\BFtheta\in C}  \abs{\displaystyle  \mbox{In-CVaR}_\alpha^\beta \left(\mathcal{L}\left(\,\abs{f(\boldsymbol{X}_{D}, \BFtheta) - Y_{D}}\right)\right)- \displaystyle  \mbox{In-CVaR}_\alpha^\beta \left(\mathcal{L}\left(\,\abs{f(\boldsymbol{X}_{D_0}, \BFtheta) - Y_{D_0}}\right) \right) } < 3 \varepsilon
    \]
    and
    \[
    \abs{\displaystyle \inf_{\BFtheta \in C} \mbox{In-CVaR}_\alpha^\beta \left(\mathcal{L}(\,\abs{f(\boldsymbol{X}_{D}, \BFtheta) - Y_{D}})\right) - \inf_{\BFtheta \in C} \mbox{In-CVaR}_\alpha^\beta \left(\mathcal{L}\left(\,\abs{f(\boldsymbol{X}_{D_0}, \BFtheta) - Y_{D_0}}\right)\right)} <  3 \varepsilon,
    \]
    where the first inequality implies uniform convergence and the second implies 
    \begin{equation}
    \label{eq:thm_stability(b)1}
    \displaystyle \inf_{\BFtheta \in C} \mbox{In-CVaR}_\alpha^\beta \left(\mathcal{L}(\,\abs{f(\boldsymbol{X}_{D}, \BFtheta) - Y_{D}})\right) \to \inf_{\BFtheta \in C} \mbox{In-CVaR}_\alpha^\beta \left(\mathcal{L}\left(\,\abs{f(\boldsymbol{X}_{D_0}, \BFtheta) - Y_{D_0}}\right)\right)
    \end{equation}
    as $d_{\text{Prok}}(D, D_0) \rightarrow 0$. We now prove the stability of the estimator. Assume, by contradiction, that there exist a sequence of distributions $\{D_k\}_{k=1}^\infty$ with $ d_{\text{Prok}}(D_k, D_0) \to 0$ and ${\BFtheta}_{D_k} \in \hat{\mathcal{S}}_\alpha^\beta(D_k,f)$ such that $\operatorname{dist}\big(\BFtheta_{D_k}, \hat{\mathcal{S}}_\alpha^\beta(D_0,f)\big) \geq \varepsilon$ for some $\varepsilon>0$. Since $C$ is compact, by passing to a subsequence if necessary, we assume  that $\BFtheta_{D_k}$ tends to a point $\BFtheta^* \in C$. It follows that $\BFtheta^* \notin \hat{\mathcal{S}}_\alpha^\beta(D_0,f)$, and hence 
    \begin{equation}
    \label{eq:thm_stability(b)2}
    \mbox{In-CVaR}_\alpha^\beta \left(\mathcal{L}\left(\,\abs{f(\boldsymbol{X}_{D_0}, \BFtheta^*) - Y_{D_0}}\right)\right) > \inf_{\BFtheta \in C} \mbox{In-CVaR}_\alpha^\beta \left(\mathcal{L}\left(\,\abs{f(\boldsymbol{X}_{D_0}, \BFtheta) - Y_{D_0}}\right)\right).
    \end{equation}
    Moreover, 
    \[
    \displaystyle \mbox{In-CVaR}_\alpha^\beta \left(\mathcal{L}\left(\,\abs{f(\boldsymbol{X}_{D_k}, \BFtheta_{D_k}) - Y_{D_k}}\right)\right) = \inf_{\BFtheta \in C} \mbox{In-CVaR}_\alpha^\beta \left(\mathcal{L}\left(\,\abs{f(\boldsymbol{X}_{D_k}, \BFtheta) - Y_{D_k}}\right)\right) 
    \]
    and
    \begin{equation}
    \label{eq:stability1}
    \begin{aligned}
        & \displaystyle \mbox{In-CVaR}_\alpha^\beta \left(\mathcal{L}\left(\,\abs{f(\boldsymbol{X}_{D_k}, \BFtheta_{D_k}) - Y_{D_k}}\right)\right) - \mbox{In-CVaR}_\alpha^\beta \left(\mathcal{L}\left(\,\abs{f(\boldsymbol{X}_{D_0}, \BFtheta^*) - Y_{D_0}}\right)\right) \\
        =   &  \left[ \displaystyle \mbox{In-CVaR}_\alpha^\beta \left(\mathcal{L}\left(\,\abs{f(\boldsymbol{X}_{D_k}, \BFtheta_{D_k}) - Y_{D_k}}\right)\right) - \mbox{In-CVaR}_\alpha^\beta \left(\mathcal{L}\left(\,\abs{f(\boldsymbol{X}_{D_0}, \BFtheta_{D_k}) - Y_{D_0}}\right)\right)  \right]\\
        & + \left[\mbox{In-CVaR}_\alpha^\beta \left(\mathcal{L}\left(\,\abs{f(\boldsymbol{X}_{D_0}, \BFtheta_{D_k}) - Y_{D_0}}\right)\right) - \mbox{In-CVaR}_\alpha^\beta \left(\mathcal{L}\left(\,\abs{f(\boldsymbol{X}_{D_0}, \BFtheta^*) - Y_{D_0}}\right)\right)\right].
    \end{aligned}
    \end{equation}
    The first term  in the right-hand side of \eqref{eq:stability1} tends to zero by uniform convergence, and the second by continuity. Therefore, 
    \[
    \displaystyle \mbox{In-CVaR}_\alpha^\beta \left(\mathcal{L}\big(\,\abs{f(\boldsymbol{X}_{D_k}, \BFtheta_{D_k}) - Y_{D_k}}\big)\right) \rightarrow \mbox{In-CVaR}_\alpha^\beta \left(\mathcal{L}\big(\,\abs{f(\boldsymbol{X}_{D_0}, \BFtheta^*) - Y_{D_0}}\big)\right),
    \]
    which yields a contradiction to \eqref{eq:thm_stability(b)1} and \eqref{eq:thm_stability(b)2}
\Halmos \end{proof}

\section{Lemmas}
\phantomsection \label{app:lem}
\begin{lemma}
    \label{lem: bound}
    Consider a nominal distribution $D_0 \in \mathcal{D}$, a loss function $\mathcal{L}$, a regression function $f$, and any $\BFtheta \in\Theta$. For any contaminated distribution $D \in \mathcal{B}(D_0, \varepsilon)$, the following bounds hold with $0 \leq \alpha < \beta \leq 1$.
    \begin{enumerate}[label=(\alph*)]
        \item With $\varepsilon \in [0,1-\beta]$, we have
        {
        \rm
        \[
        \mbox{In-CVaR}_{\alpha}^{\beta} \left(\mathcal{L}\,\left(\,\abs{f(\boldsymbol{X}_D, \BFtheta)-Y_D}\right)\right)   \leq   \mbox{In-CVaR}_{ {\alpha}/{(1-\varepsilon)}}^{ {\beta}/{(1-\varepsilon)}} \left(\mathcal{L}\left(\,\abs{f(\boldsymbol{X}_{D_0}, \BFtheta)-Y_{D_0}}\right)\right).
        \]
        }
        \item With $\varepsilon \in (0,\beta]$, suppose 
        {\rm
        \[
        \mathbb{P}\left(\mathcal{L}\,\left(\,\abs{f(\boldsymbol{X}_{G}, \BFtheta)-Y_G}\right)
        \leq  \mbox{VaR}_{(\beta - \varepsilon \eta)/(1-\varepsilon)}\left(\mathcal{L}\left(\,\abs{f\left(\boldsymbol{X}_{D_0},  \BFtheta \right) -Y_{D_0}}\right)\right) \right) \geq \eta \geq \frac{\varepsilon + \beta - 1}{\varepsilon},
        \]
        }
        then
        {
        \rm
        \[
        \mbox{In-CVaR}_{\alpha}^{\beta} \left(\mathcal{L}\,\left(\,\abs{f(\boldsymbol{X}_D, \BFtheta)-Y_D}\right)\right)  \leq \mbox{VaR}_{(\beta - \varepsilon \eta)/(1 -\varepsilon)}\left(\mathcal{L}\left(\,\abs{f\left(\boldsymbol{X}_{D_0},  \BFtheta \right) -Y_{D_0}}\right)\right).
        \]
        }
        \item With $\varepsilon \in (1 - \beta, 1 - \alpha]$, we have 
        {
        \rm
        \[
        \mbox{In-CVaR}_{\alpha}^{\beta} \left(\mathcal{L}\,\left(\,\abs{f(\boldsymbol{X}_D, \BFtheta)-Y_D}\right)\right)   \geq   \frac{\beta + \varepsilon - 1}{\beta - \alpha} \mbox{In-CVaR}_0^{(\beta + \varepsilon - 1)/\varepsilon }\left(\mathcal{L}\left(\,\abs{f\left(\boldsymbol{X}_{G},  \BFtheta \right) -Y_{G}}\right)\right).       
        \]
        }
    \end{enumerate} 
\end{lemma}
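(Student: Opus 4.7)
The three bounds all follow from a common template: express the distribution of the loss under $D = (1-\varepsilon)D_0 + \varepsilon G$ as a mixture of its distributions under $D_0$ and $G$, then convert pointwise CDF comparisons into quantile comparisons, and finally integrate over $[\alpha,\beta]$ to lift the quantile inequality to In-CVaR. Let $L_D \triangleq \mathcal{L}(|f(\boldsymbol{X}_D,\BFtheta)-Y_D|)$ and define $L_{D_0}$, $L_G$ analogously; then $F_{L_D}(t) = (1-\varepsilon) F_{L_{D_0}}(t) + \varepsilon F_{L_G}(t)$ for all $t\geq 0$. Throughout I will use the definition $\mbox{VaR}_\gamma(L) = \inf\{t : F_L(t)\geq \gamma\}$ together with the identity $(\beta-\alpha)\mbox{In-CVaR}_\alpha^\beta(L) = \int_\alpha^\beta \mbox{VaR}_\gamma(L)\,d\gamma$.

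For part (a), fix $\gamma\in[\alpha,\beta]$ with $\gamma\le 1-\varepsilon$ (which holds automatically since $\beta\le 1-\varepsilon$), set $t_\gamma \triangleq \mbox{VaR}_{\gamma/(1-\varepsilon)}(L_{D_0})$, and note $F_{L_{D_0}}(t_\gamma)\ge \gamma/(1-\varepsilon)$, so $F_{L_D}(t_\gamma)\ge (1-\varepsilon)\gamma/(1-\varepsilon) = \gamma$, hence $\mbox{VaR}_\gamma(L_D)\le \mbox{VaR}_{\gamma/(1-\varepsilon)}(L_{D_0})$. Integrating over $\gamma\in[\alpha,\beta]$ and substituting $u = \gamma/(1-\varepsilon)$ yields the claim. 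For part (b), let $t^\ast \triangleq \mbox{VaR}_{(\beta-\varepsilon\eta)/(1-\varepsilon)}(L_{D_0})$; the assumption gives $F_{L_G}(t^\ast)\ge \eta$, so
\[
F_{L_D}(t^\ast) \geq (1-\varepsilon)\cdot\frac{\beta-\varepsilon\eta}{1-\varepsilon} + \varepsilon\eta = \beta,
\]
which forces $\mbox{VaR}_\beta(L_D)\le t^\ast$; since $\mbox{In-CVaR}_\alpha^\beta(L_D)$ is the average of $\mbox{VaR}_\gamma(L_D)$ for $\gamma\in[\alpha,\beta]$ and each such VaR is non-decreasing in $\gamma$, the bound $\mbox{In-CVaR}_\alpha^\beta(L_D)\le t^\ast$ follows immediately.

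Part (c) is the reverse direction and is the one that requires the most care, so I would flag it as the main obstacle. The key step is the reverse quantile inequality: for $\gamma > 1-\varepsilon$, the identity $F_{L_D}(t)\ge \gamma$ combined with $F_{L_{D_0}}(t)\le 1$ yields $F_{L_G}(t)\ge (\gamma-1+\varepsilon)/\varepsilon$, hence any $t\ge \mbox{VaR}_\gamma(L_D)$ satisfies $t\ge \mbox{VaR}_{(\gamma-1+\varepsilon)/\varepsilon}(L_G)$; taking infimum over such $t$ gives
\[
\mbox{VaR}_\gamma(L_D)\ge \mbox{VaR}_{(\gamma-1+\varepsilon)/\varepsilon}(L_G) \quad \text{for all } \gamma\in(1-\varepsilon,\beta].
\]
Since $\varepsilon>1-\beta$ implies $\beta>1-\varepsilon$, discarding the non-negative integrand on $[\alpha, 1-\varepsilon]$ (possible because $\mathcal{L}\ge 0$) and applying the substitution $u = (\gamma-1+\varepsilon)/\varepsilon$ produces
\[
(\beta-\alpha)\mbox{In-CVaR}_\alpha^\beta(L_D) \geq \int_{1-\varepsilon}^\beta \mbox{VaR}_{(\gamma-1+\varepsilon)/\varepsilon}(L_G)\,d\gamma = (\beta+\varepsilon-1)\,\mbox{In-CVaR}_0^{(\beta+\varepsilon-1)/\varepsilon}(L_G),
\]
which rearranges to the stated inequality. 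The subtlety in (c) is handling the boundary case $\varepsilon=1-\alpha$ and verifying that the quantile comparison holds with the correct direction of inequality under the left-continuous-infimum convention; once the VaR comparison is established, the integral calculation is mechanical.
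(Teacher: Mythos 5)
Your proposal is correct and follows essentially the same route as the paper's proof: decompose the loss CDF under $D=(1-\varepsilon)D_0+\varepsilon G$ as a mixture, translate the resulting one-sided CDF bounds into quantile comparisons ($\mbox{VaR}_\gamma(L_D)\le\mbox{VaR}_{\gamma/(1-\varepsilon)}(L_{D_0})$ for (a), $\mbox{VaR}_\beta(L_D)\le t^\ast$ for (b), $\mbox{VaR}_\gamma(L_D)\ge\mbox{VaR}_{(\gamma+\varepsilon-1)/\varepsilon}(L_G)$ for (c)), and integrate over the level, discarding the nonnegative integrand on $[\alpha,1-\varepsilon]$ in part (c). The subtleties you flag in (c) are handled exactly as you suggest and pose no actual obstacle.
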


\begin{proof}{Proof.}
    (a) For any distribution  $D  \in \mathcal{B}(D_0, \varepsilon)$ and fixed $t\in \mathbb{R}$,  we have
    \[
    \begin{aligned}
         \mathbb{P}\left( \mathcal{L}\left(\,\abs{f(\boldsymbol{X}_D, \BFtheta)-Y_D}\right) \leq t \right) 
        \geq  (1-  \varepsilon ) \,  \mathbb{P} \left( \mathcal{L}\left(\,\abs{f(\boldsymbol{X}_{D_0}, \BFtheta) - Y_{D_0}}\right)\leq t \right).
    \end{aligned}
    \] 
    Hence, for any $ \gamma \in [0,1 - \varepsilon]$,  
    \[
    \begin{aligned}
        & \mbox{VaR}_{\gamma}\left( \, \mathcal{L}\left(\,\abs{f(\boldsymbol{X}_D, \BFtheta)-Y_D}\right)\right) 
        \leq   \inf \left\{t: \mathbb{P} \left( \, \mathcal{L}\left(\,\abs{f(\boldsymbol{X}_{D_0}, \BFtheta) - Y_{D_0}}\right) \leq t  \,  \right) \geq \displaystyle{\frac{\gamma}{1-\varepsilon}}  \, \right\} \\
        &\qquad =  \mbox{VaR}_{{\gamma}/{(1-\varepsilon)}}\left( \, \mathcal{L}\left(\,\abs{f(\boldsymbol{X}_{D_0}, \BFtheta) - Y_{D_0}}\right) \right).
    \end{aligned}
    \]
    Then, with $\varepsilon \in [0,1-\beta] $, we obtain
    \[
    \begin{aligned}
        & \mbox{In-CVaR}_{\alpha}^{\beta} \left(\mathcal{L}\,\left(\,\abs{f(\boldsymbol{X}_D, \BFtheta)-Y_D}\right)\right)  =  \frac{1}{\beta - \alpha} \int_{\alpha}^\beta \mbox{VaR}_{\gamma}\left( \, \mathcal{L}\left(\,\abs{f(\boldsymbol{X}_D, \BFtheta)-Y_D}\right)\right) \mathrm{d} \gamma\\
        & \qquad\leq\frac{1}{\beta - \alpha} \int_\alpha^\beta \mbox{VaR}_{\gamma/{(1-\varepsilon)}}\left( \, \mathcal{L}\left(\,\abs{f(\boldsymbol{X}_{D_0}, \BFtheta) - Y_{D_0}}\right) \right) \mathrm{d} \gamma\\
        & \qquad = \mbox{In-CVaR}_{ {\alpha}/{(1-\varepsilon)}}^{ {\beta}/{(1-\varepsilon)}} \left( \, \mathcal{L}\left(\,\abs{f(\boldsymbol{X}_{D_0}, \BFtheta) - Y_{D_0}}\right) \right).
    \end{aligned}
    \]
    For part (b),
    \[
    \begin{aligned}
        & \mathbb{P} \left(\mathcal{L}\left(\,\abs{f\left(\boldsymbol{X}_{D},  \BFtheta \right) -Y_{D}}\right) \leq \mbox{VaR}_{(\beta - \varepsilon \eta)/(1-\varepsilon)}\left(\mathcal{L}\left(\,\abs{f\left(\boldsymbol{X}_{D_0},  \BFtheta \right) -Y_{D_0}}\right)\right) \right) \\
        & \qquad = (1 - \varepsilon) \mathbb{P} \left(\mathcal{L}\left(\,\abs{f\left(\boldsymbol{X}_{D_0},  \BFtheta \right) -Y_{D_0}}\right) \leq \mbox{VaR}_{(\beta - \varepsilon \eta)/(1-\varepsilon)}\left(\mathcal{L}\left(\,\abs{f\left(\boldsymbol{X}_{D_0},  \BFtheta \right) -Y_{D_0}}\right)\right) \right) \\ 
        & \qquad \quad + \varepsilon \mathbb{P} \left(\mathcal{L}\left(\,\abs{f\left(\boldsymbol{X}_{G},  \BFtheta\right) -Y_{G}}\right) \leq \mbox{VaR}_{(\beta - \varepsilon \eta)/(1-\varepsilon)}\left(\mathcal{L}\left(\,\abs{f\left(\boldsymbol{X}_{D_0},  \BFtheta \right) -Y_{D_0}}\right)\right) \right)\\
        &\qquad \geq  \displaystyle (1 - \varepsilon) \cdot \frac{\beta - \varepsilon \eta}{1 - \varepsilon} + \varepsilon \eta = \beta,
    \end{aligned}
    \]
    which implies $\mbox{VaR}_{\beta}\left(\mathcal{L}\left(\,\abs{f\left(\boldsymbol{X}_D,  \BFtheta\right) -Y_{D}}\right)\right) \leq \mbox{VaR}_{(\beta - \varepsilon \eta)/(1-\varepsilon)}\left(\mathcal{L}\left(\,\abs{f\left(\boldsymbol{X}_{D_0},  \BFtheta\right) -Y_{D_0}}\right)\right)$ and thus
    \[
    \mbox{In-CVaR}_{\alpha}^{\beta} \left(\mathcal{L}\left(\,\abs{f\left(\boldsymbol{X}_{D},  \BFtheta\right) -Y_{D}}\right)\right)  \leq \mbox{VaR}_{(\beta - \varepsilon \eta)/(1-\varepsilon)}\left(\mathcal{L}\left(\,\abs{f\left(\boldsymbol{X}_{D_0},  \BFtheta\right) -Y_{D_0}}\right)\right).
    \]
    For part (c), fix any $t\in \mathbb{R}$,  we have
    \[
    \mathbb{P}\left( \mathcal{L}\left(\,\abs{f(\boldsymbol{X}_D, \BFtheta)-Y_D}\right) \leq t \right)  \leq \varepsilon  \,  \mathbb{P} \left( \mathcal{L}\left(\,\abs{f(\boldsymbol{X}_{G}, \BFtheta) - Y_{G}}\right) \leq t \right) + 1-  \varepsilon .
    \] 
    Therefore, for any $\gamma \in [1 - \varepsilon,1],$
    \[
    \begin{aligned}
        \mbox{VaR}_{\gamma}\left( \, \mathcal{L}\left(\,\abs{f(\boldsymbol{X}_D, \BFtheta)-   Y_D}\right)\right)  
        \geq \mbox{VaR}_{ {{(\gamma + \varepsilon - 1)}/{\varepsilon}} }\left( \, \mathcal{L}\left(\,\abs{f(\boldsymbol{X}_{G}, \BFtheta) - Y_{G}}\right) \right). 
    \end{aligned}
    \]
    If $\varepsilon \in (1 - \beta,  1 - \alpha]$, we have 
    \[
    \begin{aligned}
        & \mbox{In-CVaR}_{\alpha}^{\beta} \left(\mathcal{L}\,\left(\,\abs{f(\boldsymbol{X}_D, \BFtheta)-Y_D}\right)\right)  
        \geq   \frac{1}{\beta - \alpha} \int_{1-\varepsilon}^\beta \mbox{VaR}_{\gamma}\left( \, \mathcal{L}\left(\,\abs{f(\boldsymbol{X}_D, \BFtheta)-Y_D}\right)\right) \mathrm{d} \gamma\\
        & \qquad \geq \frac{1}{\beta - \alpha} \int_{1-\varepsilon}^\beta \mbox{VaR}_{ (\gamma + \varepsilon - 1)/{\varepsilon} }\left( \, \mathcal{L}\left(\,\abs{f(\boldsymbol{X}_{G}, \BFtheta) - Y_{G}}\right) \right)  \mathrm{d} \gamma\\
        & \qquad =  \frac{\beta + \varepsilon - 1}{\beta - \alpha} \mbox{In-CVaR}_0^{(\beta + \varepsilon - 1)/\varepsilon }\left(\mathcal{L}\left(\,\abs{f\left(\boldsymbol{X}_{G},  \BFtheta \right) -Y_{G}}\right)\right).
    \end{aligned}
    \]
    The proof is complete.
\Halmos\end{proof}

\begin{lemma}
    \label{lem:probabilistic_requirement}
    Suppose 
    \[
    D_0 = \max\left\{\frac{2\beta - 1}{\beta},0\right\} D_{0,1} + \left(1-\max\left\{\frac{2\beta - 1}{\beta},0\right\}\right) D_{0,2},
    \]
    where $D_{0,1}$ is an arbitrary distribution and $X_{D_{0,2}}$ has a bounded density. Then $D_0$ satisfies the probabilistic requirement \eqref{eq:linear_hyperplane}.
\end{lemma}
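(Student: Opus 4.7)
The plan is to exploit the mixture structure directly. Let $c \triangleq \max\{(2\beta-1)/\beta,\,0\}$, so $D_0 = c D_{0,1} + (1-c) D_{0,2}$. For any hyperplane $H \in \mathcal{H}_{p+1}$ and $\delta > 0$, mixture additivity gives
\[
\mathbb{P}(\mbox{dist}((\boldsymbol{X}_{D_0}, 1), H) < \delta) = c\,\mathbb{P}(\mbox{dist}((\boldsymbol{X}_{D_{0,1}}, 1), H) < \delta) + (1-c)\,\mathbb{P}(\mbox{dist}((\boldsymbol{X}_{D_{0,2}}, 1), H) < \delta),
\]
where the first summand is bounded trivially by $c$, uniformly in $\delta$ and $H$. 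Hence it is enough to show that $\sup_{H \in \mathcal{H}_{p+1}} \mathbb{P}(\mbox{dist}((\boldsymbol{X}_{D_{0,2}}, 1), H) < \delta) \to 0$ as $\delta \to 0^+$; combining this with the trivial bound on the $D_{0,1}$ piece will yield \eqref{eq:linear_hyperplane}.

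To establish the uniform vanishing on the $D_{0,2}$ piece, I parametrize each $H \in \mathcal{H}_{p+1}$ by a unit normal $v = (v_1, v_0) \in \mathbb{R}^p \times \mathbb{R}$, so that $\mbox{dist}((\boldsymbol{x}, 1), H) = |v_1^\top \boldsymbol{x} + v_0|$. Fix $\epsilon > 0$. I first choose $R > 0$ large enough that $\mathbb{P}(\|\boldsymbol{X}_{D_{0,2}}\| > R) < \epsilon$, and then $\eta \in (0,1)$ small enough that $\sqrt{1-\eta^2} - \eta R > 1/2$, and split on $\|v_1\|$. In the case $\|v_1\| \leq \eta$, the bound $|v_0| \geq \sqrt{1-\eta^2}$ yields $|v_1^\top \boldsymbol{x} + v_0| \geq \sqrt{1-\eta^2} - \eta R > 1/2$ for every $\|\boldsymbol{x}\| \leq R$, so for $\delta < 1/2$ the event $\{\mbox{dist}((\boldsymbol{X}_{D_{0,2}}, 1), H) < \delta\}$ is contained in $\{\|\boldsymbol{X}_{D_{0,2}}\| > R\}$ and has probability below $\epsilon$. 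In the case $\|v_1\| > \eta$, the slab $\{\boldsymbol{x} : |v_1^\top \boldsymbol{x} + v_0| < \delta\}$ has width $2\delta/\|v_1\| < 2\delta/\eta$, so its intersection with the ball of radius $R$ has Lebesgue volume at most $2V_{p-1} R^{p-1}\delta/\eta$, where $V_{p-1}$ denotes the volume of the Euclidean unit $(p-1)$-ball; applying the bounded density bound $M$ of $\boldsymbol{X}_{D_{0,2}}$ and adding back the tail contribution yields probability at most $\epsilon + 2M V_{p-1} R^{p-1} \delta/\eta$. Passing to $\sup_H$, letting $\delta \to 0^+$, and finally $\epsilon \to 0^+$ delivers the desired vanishing.

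The main technical obstacle is the uniformity in $H$. When $\|v_1\|$ is small, the normal $v$ is nearly aligned with the lifted coordinate direction, and the corresponding slab in $\boldsymbol{x}$-space is extremely wide, so the bounded density of $\boldsymbol{X}_{D_{0,2}}$ does not, by itself, control the probability. The resolution, and the key geometric observation, is that such a hyperplane stays uniformly far from the lifted point $(\boldsymbol{x}, 1)$ on any bounded set, so the event probability reduces to a tail mass that can be made arbitrarily small; the complementary regime $\|v_1\| > \eta$ is then governed by the volume estimate above.
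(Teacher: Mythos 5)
Your proof is correct and follows essentially the same route as the paper's: reduce via the mixture decomposition to showing the $D_{0,2}$ term vanishes uniformly over hyperplanes, truncate to a ball using the tail, and then split according to whether the hyperplane's normal is nearly aligned with the lifted coordinate (distance bounded below on the ball) or not (slab-volume bound combined with the bounded density). The only differences are cosmetic — you make the two cases explicit where the paper folds them into a single inequality via the threshold $\theta_0^*$, and you skip the rotation step by quoting the slab width directly.
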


\begin{proof}{Proof.}
    It suffices to show that $\lim_{\delta\rightarrow0^+} \sup_{H\in\mathcal{H}_{p+1}}\mathbb{P}(\mbox{dist}\left((\boldsymbol{X}_{D_{0,2}},1), H\right) < \delta) = 0.$ Let the density of $X_{D_{0,2}}$ be bounded by $M>0$. For any $\varepsilon > 0,$ choose $T>0$ such that $\mathbb{P}(\| \boldsymbol{X}_{D_{0,2}}\| > T) \leq \varepsilon/2.$ Then, for any $\delta \in (0,1),$
    \[
        \displaystyle\sup_{H\in\mathcal{H}_{p+1}}\mathbb{P}\left(\mbox{dist}\left((\boldsymbol{X}_{D_{0,2}},1), H\right) < \delta\right) 
        \leq  \frac{\varepsilon}{2} + \sup_{H\in\mathcal{H}_{p+1}}\mathbb{P}\left(\mbox{dist}\left((\boldsymbol{X}_{D_{0,2}},1), H\right) < \delta, \| \boldsymbol{X}_{D_{0,2}}\| \leq T\right).
    \]
    For $(\BFtheta_1, \theta_0) \in \mathbb{R}^{p+1}$ with $\|(\BFtheta_1, \theta_0)\| = 1$ and $\boldsymbol{x} \in \mathcal{X}$ with $\| \boldsymbol{x}\| \leq T$, by the triangle inequality and the Cauchy–Schwarz inequality, we have 
    \[
    |\BFtheta_1^T \boldsymbol{x} + \theta_0| \geq |\theta_0| - |\BFtheta_1^T \boldsymbol{x}| \geq  |\theta_0| - T\sqrt{1-\theta_0^2},
    \]
    where the right-hand side is increasing with respect to $|\theta_0|$. Choose $\theta_0^* \in (0,1)$ such that $\bar \delta= \theta_0^* - T\sqrt{1-\theta_0^{*2}} > 0$. Let $R(\BFtheta) \in \mathbb{R}^{p \times p}$ be orthogonal such that $R(\BFtheta) \BFtheta_1 = \sqrt{1 - \theta_0^2} \BFe_1$, and define $\Tilde{\boldsymbol{x}}(\BFtheta) = R(\BFtheta) \boldsymbol{x}$. Then, for any $\delta < \bar \delta$, we have 
    \[
    \begin{aligned}
        & \mbox{Volume} \left\{\boldsymbol{x} : |\BFtheta_1^T \boldsymbol{x} + \theta_0| < \delta, \| \boldsymbol{x}\| \leq T\right\} \\
        &\qquad =  \mbox{Volume} \left\{\Tilde{\boldsymbol{x}}(\BFtheta) : \left|\sqrt{1-\theta_0^2} \tilde{x}_1(\BFtheta) + \theta_0\right| < \delta, \| \Tilde{\boldsymbol{x}}(\BFtheta)\| \leq T\right\}\\
        & \qquad \leq \frac{2\delta}{\sqrt{1-\theta_0^2}} V_{p-1} T^{p-1} \leq \frac{2\delta}{\sqrt{1-\theta_0^{*2}}} V_{p-1} T^{p-1},
    \end{aligned}
    \]
    where $V_{p-1}$ is the volume of the unit ball in $\mathbb{R}^{p-1}$, yielding 
    \[
    \displaystyle\sup_{H\in\mathcal{H}_{p+1}}\mathbb{P}\left(\mbox{dist}\left((\boldsymbol{X}_{D_{0,2}},1), H\right) < \delta\right) \leq  \displaystyle \frac{\varepsilon}{2} + \frac{2\delta}{\sqrt{1-\theta_0^{*2}}}M V_{p-1} T^{p-1}.
    \]
    By choosing $ \delta < \min\{\bar\delta, \varepsilon \sqrt{1-\theta_0^{*2}}/ 4 M V_{p-1} T^{p-1}\}$, we have ${\sup}_{H\in\mathcal{H}_{p+1}}\mathbb{P}(\mbox{dist}\left((\boldsymbol{X}_{D_{0,2}},1), H\right) < \delta) < \varepsilon$ and the proof is completed by letting $\varepsilon \to 0$.
\Halmos\end{proof}

\begin{lemma}
    \label{lem:continuity_incvar}
    For any nonnegative univariate random variables $X_n$ converging weakly to $X_0$, and for $0 \le \alpha < \beta < 1$, we have {\rm $\mbox{In-CVaR}_\alpha^\beta(X_n) \rightarrow \mbox{In-CVaR}_\alpha^\beta(X_0)$} as $n\to \infty$. 
\end{lemma}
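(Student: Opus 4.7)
The plan is to exploit the integral-of-quantiles representation
\[
\mbox{In-CVaR}_\alpha^\beta(X) \;=\; \frac{1}{\beta - \alpha}\int_\alpha^\beta \mbox{VaR}_\gamma(X)\, \mathrm{d}\gamma
\]
and reduce the weak-convergence statement to Lebesgue dominated convergence on the interval $[\alpha,\beta]$. The key classical fact I would invoke is that if $X_n \Rightarrow X_0$, then the left-continuous quantile functions satisfy $\mbox{VaR}_\gamma(X_n) \to \mbox{VaR}_\gamma(X_0)$ at every $\gamma \in (0,1)$ that is a continuity point of the quantile function $F_0^{-1}$. Since $F_0^{-1}$ is monotone it has at most countably many discontinuities, so this pointwise convergence holds for almost every $\gamma \in [\alpha,\beta]$ with respect to Lebesgue measure.

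The remaining step is to produce an integrable majorant for $\{\mbox{VaR}_\gamma(X_n)\}$ on $[\alpha,\beta]$, and this is where the hypothesis $\beta < 1$ enters decisively. Because $F_0^{-1}$ has only countably many jumps, I can pick some continuity point $\beta' \in (\beta, 1)$ of $F_0^{-1}$. Then $\mbox{VaR}_{\beta'}(X_n) \to \mbox{VaR}_{\beta'}(X_0) < \infty$, so in particular there exists $M < \infty$ with $\mbox{VaR}_{\beta'}(X_n) \leq M$ for all large $n$. Monotonicity of the quantile in the level gives
\[
0 \;\leq\; \mbox{VaR}_\gamma(X_n) \;\leq\; \mbox{VaR}_{\beta'}(X_n) \;\leq\; M \qquad \text{for all } \gamma \in [\alpha,\beta],
\]
where the nonnegativity is used for the lower bound. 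The constant $M$ is then an integrable dominating function on $[\alpha,\beta]$, and the dominated convergence theorem yields
\[
\int_\alpha^\beta \mbox{VaR}_\gamma(X_n)\, \mathrm{d}\gamma \;\longrightarrow\; \int_\alpha^\beta \mbox{VaR}_\gamma(X_0)\, \mathrm{d}\gamma,
\]
which after dividing by $\beta - \alpha$ is exactly the claim.

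The main obstacle is really bookkeeping rather than substance: one has to justify the almost-everywhere pointwise convergence of quantile functions from weak convergence (a standard Skorokhod/Helly-type result that I would cite or sketch via the definition of weak convergence at continuity points of $F_0$), and one must carefully select $\beta'$ strictly inside $(\beta,1)$ as a continuity point so that the dominating bound is finite. The hypothesis $\beta < 1$ is crucial here: if $\beta = 1$ were allowed, no such $\beta'$ would exist and the tail $\mbox{VaR}_\gamma(X_n)$ near $\gamma = 1$ could blow up arbitrarily, which is precisely the phenomenon that drives the necessity result in Theorem \ref{thm: qrao_necessary}.
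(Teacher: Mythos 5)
Your proof is correct, but it takes a different route from the paper's. The paper splits into two cases: for $\alpha \neq 0$ it simply cites Theorem 3.4 of \cite{cont2010robustness}, and for $\alpha = 0$ it works with the L\'evy distance $\bar\varepsilon_n = d_{\text{L\'evy}}(F_{X_n},F_{X_0}) \to 0$, sandwiching $\mbox{In-CVaR}_0^\beta(X_n)$ between integrals of level-shifted quantiles of $X_0$ (namely $\mbox{VaR}_{\gamma\pm\bar\varepsilon_n}(X_0)$) and then letting the shift vanish, using that $\mbox{VaR}_{\gamma+\bar\varepsilon_n}(X_0)-\mbox{VaR}_{\gamma-\bar\varepsilon_n}(X_0)\to 0$ off the countable discontinuity set. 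You instead invoke the standard fact that weak convergence gives $\mbox{VaR}_\gamma(X_n)\to\mbox{VaR}_\gamma(X_0)$ at every continuity point of $F_{X_0}^{-1}$ (hence Lebesgue-a.e.\ on $[\alpha,\beta]$), extract a uniform majorant by evaluating at a continuity point $\beta'\in(\beta,1)$, and apply dominated convergence. Both arguments ultimately rest on the same two ingredients --- countability of quantile discontinuities and finiteness of a quantile at some level strictly below $1$, which is exactly where $\beta<1$ enters --- but your version treats $\alpha=0$ and $\alpha\neq 0$ uniformly and avoids the external citation, at the cost of invoking the quantile-convergence lemma that the paper's L\'evy-metric sandwich sidesteps by only ever shifting quantiles of the fixed limit $X_0$. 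One cosmetic caveat: your pointwise lower bound $\mbox{VaR}_\gamma(X_n)\geq 0$ fails at $\gamma=0$ under the paper's definition of $\mbox{VaR}$ (where it is $-\infty$), but this is a Lebesgue-null set and does not affect the dominated convergence argument.
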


\begin{proof}{Proof.}
    When $\alpha \neq 0$, the claim follows from Theorem 3.4 of \cite{cont2010robustness}. It thus suffices to consider $\alpha = 0$. Let $F_{X_n}$ and $F_{X_0}$ denote the cumulative distribution function of $X_n$ and $X_0$, respectively. Define the Lévy distance 
    \[
    d_{\text{Lévy}}(F_{X_n}, F_{X_0}) \triangleq \inf_{\varepsilon>0} \{\varepsilon: F_{X_0}(x-\varepsilon)-\varepsilon \leq F_{X_n}(x) \leq F_{X_0}(x+\varepsilon)+\varepsilon, \forall x \in \mathbb{R}\}.
    \]
    By Theorem 2.9 of \cite{huber2009robust}, weak convergence of $X_n$ to $X_0$ implies $\bar \varepsilon_n = d_{\text{Lévy}}(F_{X_n}, F_{X_0}) \rightarrow 0$. For suffiently large $n$, we have $\bar\varepsilon_n < \beta $ and
    \[
    \begin{aligned}
        & \frac{1}{\beta} \left(-\bar \varepsilon_n + \int_{\bar \varepsilon_n}^\beta \mbox{VaR}_{\gamma - \bar \varepsilon_n}(X_0) \mathrm{d}\gamma\right) \leq \frac{1}{\beta}\int_{ \bar \varepsilon_n}^ \beta \mbox{VaR}_\gamma(X_n) \mathrm{d}\gamma\\
        & \qquad \leq \mbox{In-CVaR}_0^\beta(X_n) \leq \frac{1}{\beta } \left( \bar \varepsilon_n + \int_0^\beta \mbox{VaR}_{\gamma+\bar \varepsilon_n} (X_0) \mathrm{d} \gamma \right).
    \end{aligned}
    \]
    Therefore, 
    \[
    \begin{aligned}
        & \abs{\mbox{In-CVaR}_0^\beta(X_n) - \mbox{In-CVaR}_0^\beta(X_0)} \\
        & \qquad\leq  \frac{1}{\beta}\left(2\bar\varepsilon_n + \int_{0}^{\bar\varepsilon_n} \mbox{VaR}_{\gamma+\bar \varepsilon_n} (X_0) \mathrm{d}\gamma + \int_{\bar\varepsilon_n}^\beta \left( \mbox{VaR}_{\gamma+\bar \varepsilon_n} (X_0) - \mbox{VaR}_{\gamma - \bar \varepsilon_n}(X_0) \right) \mathrm{d}\gamma\right).
    \end{aligned}
    \]
    By letting $\bar n \rightarrow 0$, $\mbox{VaR}_{\gamma+\bar \varepsilon_n} (X_0) - \mbox{VaR}_{\gamma - \bar \varepsilon_n}(X_0) \rightarrow 0 $ for all $\gamma \in (0,1)$, except at countably many discontinuities, yielding $\big|{\mbox{In-CVaR}_\alpha^\beta(X_n) - \mbox{In-CVaR}_\alpha^\beta(X_0)}\big| \rightarrow 0$, which completes the proof.    
\Halmos\end{proof}

\end{APPENDICES}

\end{document}